\documentclass{article}

\pdfoutput=1


\usepackage[preprint,nonatbib]{neurips_2024}




\usepackage[utf8]{inputenc} 
\usepackage[T1]{fontenc}    
\usepackage{hyperref}       
\usepackage{url}            
\usepackage{booktabs}       
\usepackage{amsfonts}       
\usepackage{nicefrac}       
\usepackage{microtype}      
\usepackage{xcolor}         

\usepackage[backend=biber,style=numeric]{biblatex}
\addbibresource{refs.bib}

\usepackage{ant_cmds2}

\title{Further Understanding of a Local Gaussian Process Approximation: Characterising Convergence in the Finite Regime}

%

\author{%
Anthony Stephenson \\
Department of Mathematics \\ 
University of Bristol \\
\texttt{ant.stephenson@bristol.ac.uk} \\
\And
Robert Allison \\
Dpeartment of Mathematics \\
University of Bristol \\
\texttt{marfa@bristol.ac.uk} \\
\And
Edward Pyzer-Knapp \\
IBM Research \\
}

\begin{document}

\maketitle

\begin{abstract}
We show that common choices of kernel functions for a highly accurate and massively scalable nearest-neighbour based GP regression model (GPnn: \cite{GPnn}) exhibit gradual convergence 
to asymptotic behaviour as dataset-size \(n\) increases. For isotropic kernels such as Mat\'{e}rn and squared-exponential, an upper bound on the predictive MSE can be obtained as \(\bigO{n^{-\frac{p}{d}}}\) for input dimension \(d\), \(p\) dictated by the kernel (and \(d>p\)) and fixed number of nearest-neighbours \(m\) with minimal assumptions on the input distribution. Similar bounds can be found under model misspecification and combined to give overall rates of convergence of both MSE and an important calibration metric. We show that lower bounds on \(n\) can be given in terms of \(m\), \(l\), \(p\), \(d\), a tolerance \(\varepsilon\) and a probability \(\delta\). When \(m\) is chosen to be \(\bigO{n^{\frac{p}{p+d}}}\) minimax optimal rates of convergence are attained.
Finally, we demonstrate empirical performance and show that in many cases convergence occurs faster than the upper bounds given here.
\end{abstract}

\section{Introduction}
\label{sec:intro}

\paragraph{Gaussian Processes (GPs)} are stochastic processes which can be viewed as distributions over functions, widely used for non-linear regression with a focus on well-calibrated uncertainty. Despite a rich history and broad applications, exact-GP regression's \(\bigO{n^3}\) scaling hinders its practicality for moderately large datasets. Numerous approximation techniques aim to address this and capture the headline attributes of uncertainty quantification and model flexibility. For a thorough review of such approximations, we refer the reader to \cite{GP_approx_review}. 

Within the ML community, variational sparse approximations are amongst the most popular, with desirable theoretical properties and effective empirical performance. Often, empirical Bayes procedures are preferred, estimating kernel hyperparameters by maximum likelihood estimation or maximisation of the evidence lower bound (ELBO) in variational settings. In contrast, the geospatial research community makes extensive use of local approximations and often favours fully Bayesian methodologies that place priors over kernel hyperparameters and rely on sampling-based approaches for inference. Both strategies have had significant success in improving scalability of GP regression. Here, we focus on empirical Bayes local approximation methods to drive the scalability of GPs further still.

\paragraph{Nearest Neighbour GP Approximations} have a long history in the geo-spatial research community \cite{vecchia_estimation_1988,Stein2002,Stein2004,Stein2010,Gramacy2015,Datta2016,finley_efficient_2018}. More recently, the ML community has applied variational methods to the problem in \cite{Tran2020,Wu2022}. We focus on a straightforward predictive approximation, GPnn, introduced in \cite{GPnn}. In that work, the authors decouple parameter estimation from prediction to achieve a highly performant and scalable model by choosing a cheap training procedure (using latched subsets to estimate parameters) and feeding the output into an independent predictive mechanism. They show that, asymptotically, metrics measuring predictive performance (MSE, NLL, Calibration) become insensitive to kernel hyperparameters and even kernel choice (under relatively weak assumptions of weak stationarity of the generative random field). Their work demonstrates theoretical convergence to approximately optimal performance (up to terms reciprocal in the number of neighbours) and includes supporting empirical evidence, alongside strong performance on a selection of UCI datasets: outperforming benchmark distributed GP approximations and SVGP almost uniformly. A more thorough description of existing nearest-neighbour methods is given in \ref{appx:nngps} and \cite{GPnn}.

\paragraph{Convergence Rates} provide a substantive extension to asymptotic convergence results, which offer important assurances of model behaviour (in the limit) and suggestions of trend in performance in terms of \(n\). Rates can provide a guide as to when limiting behaviour might become relevant and the suddenness by which this occurs. This is crucial to understanding the potential performance of a model and can (in principle) be compared to convergence rates of other models to allow a practitioner to select an approach most likely to achieve their aims. With this in mind, we review existing relevant convergence rates found in the literature on both classical nearest-neighbour (\(k\)-NN) methods as well as contraction rates of both exact and approximate GPs.

\paragraph{Structure and Contributions:}
In this work we extend the theoretical results given in \cite{GPnn} from purely asymptotic (supported by empirical performance) to rates of convergence. This provides practitioners with the capability to quantify the performance and robustness of the model under misspecification in terms of dataset size \(n\). To do this, we derive rates of convergence of both the MSE and a measure of (weak) calibration in terms of \(n\), as well as describing important phenomenology regarding interactions of lengthscale, dimension and \(n\), both qualitatively and empirically.

\Cref{sec:background} provides a brief background on exact-GP regression as usually applied in the ML community, a description of the model known as GPnn (as given in \cite{GPnn}) and relevant convergence rates from the literature on related models. 
\Cref{sec:rates} gives the main results of this manuscript: the convergence rates for the GPnn predictions under various forms of model misspecification with respect to the underlying data-generating process. \Cref{sec:exp} gives empirical evidence of the convergence behaviour observed under simulated conditions and a qualitative discussion on its relation to the theory given in the preceding section. Finally, a brief discussion of the theoretical and empirical results is offered in \Cref{sec:discussion}, followed by a summary of outstanding research directions, some of which are actively pursued by the authors.

\section{Background}
\label{sec:background}

\subsection{Gaussian Processes}
\label{sec:GP}

Exact GP regression, when applied in an empirical Bayes framework, is carried out by defining a GP prior over a latent function \(f: \mathbb{R}^d \rightarrow \mathbb{R}\) with assumptions on the properties of \(f\) encoded by the choice of prior positive definite covariance function $c(\cdot,\cdot):\mathbb{R}^d \times \mathbb{R}^d \rightarrow \mathbb{R}_{+}$. A model of the form \(y(\xv)=f(\xv)+\xi\) is assumed with \(\xi\sim \mc{N}(0,\sigma_\xi^2)\) where \(\sigma_\xi^2\) is termed the noise-variance and forms one of the elements of a vector \(\bm{\theta}\) of hyperparameters that, in this methodology, are estimated by minimising the marginal (Gaussian) negative loglikelihood: \(-\log p(\bm{y}|X,\bm{\theta}) = \frac{1}{2}\{ \bm{y}^T K_{\bm{\theta}}^{-1} \bm{y} + \log|K_{\bm{\theta}}| + n \log(2 \pi) \}\). Estimates of these hyperparameters, \(\thv\), are then fed into the following expressions for the mean and variance of the predictive distribution (obtained via Bayes rule) at a particular test point \(\xv^*\), 
\begin{align}
  y^*\given X,\bm{y} &\sim \mathcal{N}(\mu^*,\predvar) \label{eq:gp_pred}\\
  \mu^* &= {\kstar}^T K ^{-1}\bm{y} \label{eq:gp_mean_pred}\\
  \predvar &= \sigma_f^2 - {\kstar}^T K^{-1}\kstar + \sigma_\xi^2 \label{eq:gp_var_pred}
\end{align}
where $K = K_{\hat{\bm{\theta}}}$ with components \([K]_{ij}=k_{\hat{\bm{\theta}}}(\xv_i,\xv_j)\); the vector $\kstar$ has components $k^*_i= k_{\hat{\bm{\theta}}}(\xv_i,\xv^*)$, and \(k_{\bm{\theta}}(\xv,\xv') = \sigma_f^2c(\xv/l,\xv'/l) + \delta_{\xv,\xv'}\sigma_\xi^2\) (when we take the special case $\bm{\theta} = (l,\sigma_\xi^2,\sigma_f^2)$) with a ``normalised'' covariance function \(c(\cdot,\cdot)\) such that \(c(\xv,\xv)=1\).
See \cite{GP_book} for a classic text on the subject.

\subsection{GPnn}
\label{sec:GPnn}

GPnn predictions are made using the same formulae, \eqref{eq:gp_pred}, \eqref{eq:gp_mean_pred} and \eqref{eq:gp_var_pred}, but where conditioning is now with respect to the nearest neighbour sets \(N \in \mathbb{R}^{m \times d}\) and \(\bm{y}_N \in \mathbb{R}^m\):
\begin{align}
  y^*\given N(\bm{x}^*),\bm{y}_N &\sim \mc{N}(\mu_N^*,\predvar_N)\label{eq:gpnn_pred} \\
  \mu_N^* &= \kstar[]_N^T {K}_N^{-1}\bm{y}_N \label{eq:gpnn_mean_pred}\\
  \predvar_N &= {\sigma}_f^2 - \underbrace{\kstar[]_N^T {K}_N^{-1}\kstar[]_N}_{\kappa_N} + {\sigma}_\xi^2 .\label{eq:gpnn_var_pred}
\end{align}

\subsection{Convergence Rates in \texorpdfstring{\(k\)}{k}-NN Approximations}
\label{sec:rates_longaxis}


Derivations of convergence rates for the classical \(k\)-NN algorithm are relatively well known, being computed in e.g. \cite{Gyorfi2010} as \(\mc{O}\big(n^{-\frac{2}{d+2}}\big)\) under a Lipschitz condition on the true function and a bounded input space. In addition to this reference, we draw particular attention to \cite{Kohler2006} which extends classical \(k_n\)-NN convergence rates from bounded input spaces to the unbounded case (provided a moment condition on the input distribution is satisfied) to give a rate of \(\mc{O}\big(n^{-\frac{2r}{2r+d}}\big)\), where \(0<r\leq 1\) is the H\"{o}lder exponent of the true function. 
Note that, in each case, the number of nearest neighbours \(k_n\) is taken to grow with \(n\) to ensure the estimators are consistent.

\subsection{Convergence Rates of GP Posteriors}
\label{sec:rates_gp}
To provide context for the rates we will derive, it is helpful to first review existing convergence rates in the literature for exact GP regression as well as common approximation methods (such as SVGP). For the former, we look to \cite{van_der_vaart_information_2011} and highlight the results the authors obtain in Theorem 5, which
gives rates of contraction of the GP posterior of \(\mc{O}\big(n^{-2\min(\alpha,\beta)/(2\alpha+d)}\big)\) for a kernel \(k_s\) on \([0,1]^d\) 
which defines an RKHS equipped with a norm equivalent to the Sobolev space \(W^s_2([0,1]^d)\) 
where \(s:=\alpha+d/2\), \(\alpha>0\) and \(\min(\alpha,\beta)>d/2\) and \(\beta\) controls the smoothness of the true function.

For variational methods, \cite{burt_convergence_2020} give rates at which the approximate posterior converges to the exact GP posterior which are further refined in \cite{vakili_improved_2022}. Subsequently, \cite{Nieman2022} derive rates under conditions directly analagous to those in \cite{van_der_vaart_information_2011} and find that these rates match those of exact GP regression when the number of inducing points \(m\) is chosen appropriately. For example, for the Mat\'{e}rn-\(\nu\), the authors set \(m=\tilde{\mc{O}}\big(n^{\frac{d}{2\nu+d}}\big)\) to give a rate of \(\ordtilde{n^{-\frac{\nu}{2\nu+d}}}\). 

We explore the relationships between the function spaces described in this section and above in more detail in \ref{sec:function_spaces}.


\section{Convergence Rates of GPnn}
\label{sec:rates}

In this section we derive convergence rates for GPnn. The calculations rely primarily on assumptions on the form of the kernel and its relation to Euclidean distance. In particular, we assume some form of Lipschitz-type relation holds (\ref{ass:kernel_dist}) so that we can make use of classical results on convergence rates of nearest-neighbours under Euclidean distances. In the well-specified model scenario we provide bounds for the induced metrics of some non-trivial kernel choices and this, with a mild technical condition, is sufficient to derive the rates given. For the misspecified case, a more involved approach is required, as the former fails to give useful results. In this case we use Neumann expansions of the kernel inverse and combine these with similar bounds on the induced metrics of the model kernel. The resulting computations require more stringent conditions to hold than in the well-specified case. 


As a first step (after defining relevant notation), we give some definitions and assumptions that will be used extensively in the derivation of the results.

\paragraph{Notation:} 
\label{sec:notation}
For simplicity we adopt notation as used in \cite{GPnn} with \(\rho(\xv,\xv')=\sigma_f\sqrt{1-c(\xv/l,\xv'/l)}\) being the kernel-induced distance function over \(\mathbb{R}^d\) \cite{Scholkopf2001}; defining \(\xrv_{(j,n)}(\xv^*)\) as the \(j^{th}\) nearest neighbour random variable to a test point \(\xv^*\) under \(\rho\), which we abbreviate to \(\xrv_{(j)}\) when the context is clear, and \(\xv_{(j)}(\xv^*) \in N_m(\xv^*)\) as the realised \(j^{th}\) nearest neighbour of the test point \(\xv^*\) from a training set \(X\) in the set of \(m\) nearest neighbours \(N_m\). We correspondingly define \(\epsilon_i = \rho^2(\xrv_{(i)},\xv^*)\) and \(\epsilon_{ij} = \rho^2(\xrv_{(i)},\xrv_{(j)})\).
We will on occasion abuse notation and write any of \(k,c,\rho\) as functions of a single argument, when the underlying kernel function (\(c\)) is isotropic and hence a function of \(\Delta := \norm{\xv-\xv'}\) (or \(\norm{\xrv-\xrv'}\) when the context is clear that we are dealing with RVs). Similarly, we define \(\Delta_j^{\xv}:=\norm{\xrv_{(j,n)}(\xv)-\xv}\). 
Additionally, we define the quantities \(\eps^\bullet_{\min},\eps^{\bullet\bullet}_{\min}\) by \(\min_{i}\eps_{i},\min_{ij}\eps_{ij}\) respectively, and similarly for \(\eps_{\max}\). 
We use hatted notation to indicate any form of misspecification (clarified in \cref{sec:mis_rate}).
We use the notation \(A \preceq B\) to mean the matrix \(B-A\) is positive semi-definite and \(a_n \lesssim b_n\) to mean \(a_n \leq b_n + o(b_n)\), i.e. ignoring negligible higher order terms. We will generally truncate expansions in \(\eps_i,\eps_{ij}\) etc. to second order and, when using \(\bigO{\cdot}\) notation, we take \(\eps\) to represent all definitions. 
We define the performance metrics we analyse as \(f_n^{\MSE}(\thv) = \Eyy{(y^*-\mu_N^*)^2}\), \(f_n^{\CAL}(\thv) = f_n^{\MSE}(\thv)/\predvar_N\) and \(f_n^{\NLL}(\thv)=\half(\log\predvar_N + \log2\pi) + f_n^{\CAL}(\thv)\).

\begin{definition}[Support]\label{def:support}
    Let \(P_\xrv\) be the probability measure of \(\xrv\) and \(S^\rho_{\xv,\epsilon}\) the closed ball of radius \(\epsilon>0\) under the metric \(\rho\) centred at \(\xv\). Then we define \(\support(P_\xrv) = \{\xv : P_\xrv(S^\rho_{\xv,\epsilon})>0\, \forall\,\epsilon >0\}\).
\end{definition}

\begin{definition}[Weakly-faithful]\label{def:weakly_faithful}
    We define a pair of metrics \(\rho(\cdot,\cdot),\hat{\rho}(\cdot,\cdot)\) to be \emph{weakly-faithful} over \(\mc{X}\) w.r.t. each other if the following condition holds: 
    \(\rho(\xrv_{(m)}(\xv),\xv)\xrightarrow{n\rightarrow\infty}0 \iff \hat{\rho}(\xrv_{(m)}(\xv),\xv)\xrightarrow{n\rightarrow\infty}0\) for all \(\xv \in \mc{X}\).
\end{definition}

\begin{assumption}
\label{ass:convergence}
Below, we list the main assumptions that we will utilise piecewise throughout the following sections apart from \ref{ass:wsrf} which is applied everywhere.
  \begin{enumerate}[start=0,before=\leavevmode,label=assumption,ref=(A\arabic*)]
    \item \label[assumption]{ass:wsrf} \(y_i=f(\bm{x}_i) + \xi_i\) with \(\xi_i\overset{\textrm{iid}}{\sim}P_\xi\); \(f(\bm{x})\sim \mc{WSRF}(\sigma_f^2 c(./l,./l))\), \(y_i\given f(\bm{x}_i) \sim P_\xi\) and \(\E[\xi]=0\), \(\E[\xi^2]=\sigma_\xi^2\); with \(\mc{WSRF}\) a Weakly Stationary Random Field.
    \item \label[assumption]{ass:kernel_dist} There exist constants \(L \in \mathbb{R}_+,p \in (0,2]\)  such that \(\rho^2(\xv,\xv';l) < L\left(\frac{\norm{\xv-\xv'}}{l}\right)^p\). 
    \item \(\xrv\overset{\textrm{iid}}{\sim}P_\xrv\) and \(\xrv^* \in \support(P_\xrv)\) under the generative metric defined by \(c(\cdot,\cdot)\); there exists a \(\beta > p\) such that \(\E{\norm{\xrv}^\beta}<\infty\) and \(d>p\) (where \(p\) is defined by \ref{ass:kernel_dist}). \label[assumption]{ass:X}
    \item \label[assumption]{ass:faithful} \(c(\cdot,\cdot),\hat{c}(\cdot,\cdot)\) are stationary kernels whose induced distance functions are \emph{weakly faithful} metrics (\autoref{def:weakly_faithful}). 
    \item \label[assumption]{ass:eps_min} \(\eps^{\bullet\bullet}_{\min} < \frac{m\sigma_f^2 + \sigma_\xi^2}{m-1}\). 
  \end{enumerate}
\end{assumption}

 Whilst the first part of \ref{ass:X} is standard in the literature, the moment condition allows us to forgo the stronger condition of requiring bounded support. \ref{ass:kernel_dist} holds for many commonly used kernels, including the squared-exponential, Mat\'{e}rn kernels with \(\nu=\half\) and \(\nu\neq1\) (which includes the squared-exponential with \(\nu=\infty\)) and the rational quadratic kernel (see \ref{appx:assumptions}). Note also that \ref{ass:kernel_dist} implies (by Kolmogorov's continuity theorem) a local H\"{o}lder continuity assumption, which is weaker than a general H\"{o}lder continuous assumption (see \ref{sec:function_spaces}).
 \ref{ass:faithful} is a weak assumption that likewise we would expect to hold for many oft-used kernels, especially all induced distance metrics that are monotonic functions of Euclidean distance (such as the Mat\'{e}rn). \ref{ass:eps_min} in contrast is a technical condition required for the machinery used subsequently to be valid. Fortunately, the condition given for \(\eps^{\bullet\bullet}_{\min}\) is not very stringent: Suppose we normalise the observation variance so that \(\sigma_f^2+\sigma_\xi^2=1\), then the condition can be translated to \(\eps^{\bullet\bullet}_{\min}<\sigma_f^2+\frac{1}{m-1}\), which is always satisfied.

\subsection{Under the Generative Model}
\label{sec:gen_rate}
Although we will go on to derive a rate under a misspecified model, we first describe the simpler case when the model is well-specified to allow a gradual exposition as the complexity of the situation increases. 
Here, when we use the term ``well-specified'', we mean in terms of the second order structure of the data generating process. This allows us to retain a degree of ``misspecification'' with respect to the noise process (which is permitted to be non-Gaussian, provided its moments satisfy the usual constraints) and the stochastic process which is understood to be a WSRF, {\it not necessarily} a GP, \ref{ass:wsrf}.

First we give a bound on the MSE as a function of the nearest neighbour distances (under the kernel induced metric):

\begin{lemma}[Matched parameter upper bound]
    \label{lem:MSE_UB}
    With \(\hat{c}(\cdot)=c(\cdot)\) and \(\thv=\bm{\theta}\), i.e. a well-specified model,
    \begin{equation*}
        f_n^{\MSE} \leq \, \sigma_\xi^2 + \sigma_f^2 - \frac{\sigma_f^2 - 2\avg{\eps_{\bullet}}}{1+\sigma_\xi^2/m\sigma_f^2} + \bigO{\eps^2}.
    \end{equation*}
\end{lemma}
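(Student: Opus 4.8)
The plan is to expand the GPnn predictive mean-squared error directly in terms of the kernel-induced nearest-neighbour distances \(\eps_i = \rho^2(\xrv_{(i)},\xv^*)\) and \(\eps_{ij} = \rho^2(\xrv_{(i)},\xrv_{(j)})\), treating these as small quantities and truncating at second order. Starting from \(f_n^{\MSE} = \Eyy{(y^*-\mu_N^*)^2}\) with \(\mu_N^* = \kstar[]_N^T K_N^{-1} \bm{y}_N\), I would first use \ref{ass:wsrf} (well-specifiedness of the second-order structure) to write this as a purely kernel-algebraic expression: the cross terms involving \(\bm{y}_N\) and \(y^*\) reduce to kernel evaluations under the WSRF covariance, giving something of the form \(\sigma_\xi^2 + \sigma_f^2 - 2\kstar[]_N^T K_N^{-1}\kstar[]_N + \kstar[]_N^T K_N^{-1} C_N K_N^{-1}\kstar[]_N\) where \(C_N\) is the noise-free covariance block. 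Crucially, since the model is well-specified, \(C_N = K_N - \sigma_\xi^2 I\), which collapses the quadratic term considerably.

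Next I would parametrise the kernel entries in terms of the \(\eps\)'s: because \(c(\xv,\xv)=1\) we have \(\rho^2(\xv,\xv') = \sigma_f^2(1 - c(\xv/l,\xv'/l))\), so \(c = 1 - \eps/\sigma_f^2\). Thus \(\kstar[]_N = \sigma_f^2 \bm{1} - \bm{\eps}_\bullet\) (componentwise, with \(\bm{\eps}_\bullet\) the vector of \(\eps_i\)) and \(K_N = \sigma_f^2 \bm{1}\bm{1}^T + \sigma_\xi^2 I - E\), where \(E\) has entries \(\eps_{ij}\) off-diagonal and \(0\) on the diagonal. The leading-order structure is therefore a rank-one-plus-diagonal matrix \(\sigma_f^2\bm{1}\bm{1}^T + \sigma_\xi^2 I\) perturbed by the \(\bigO{\eps}\) matrix \(-E\) and with the vector similarly perturbed. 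I would invert \(K_N\) via the Sherman–Morrison formula at leading order and then a first-order Neumann/perturbation correction for the \(E\) term, keeping terms up to \(\bigO{\eps}\) in the scalar quadratic forms (which then yields \(\bigO{\eps^2}\) error in \(f_n^{\MSE}\) once squared combinations are accounted for). Assembling the pieces, the \(\sigma_f^2/(1+\sigma_\xi^2/m\sigma_f^2)\) denominator emerges naturally from the Sherman–Morrison scalar \((\bm{1}^T(\sigma_f^2\bm{1}\bm{1}^T+\sigma_\xi^2 I)^{-1}\bm{1})^{-1}\)-type quantity, and the \(2\avg{\eps_\bullet}\) term (with \(\avg{\eps_\bullet}\) the mean of the \(\eps_i\)) comes from the first-order correction from \(\bm{\eps}_\bullet\).

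The main obstacle I anticipate is bookkeeping the first-order perturbation terms carefully enough to see that the off-diagonal \(\eps_{ij}\) contributions cancel or collapse into the clean form stated — i.e. that the bound depends on the nearest-neighbour-to-test distances \(\eps_i\) but, at this order and for this upper bound, not on the pairwise \(\eps_{ij}\) beyond what's absorbed into \(\bigO{\eps^2}\). This is where the \emph{inequality} (rather than equality) in the lemma does real work: I expect one shows \(\kstar[]_N^T K_N^{-1}\kstar[]_N \geq\) (or the relevant quadratic form is bounded) using that \(E \succeq 0\)-type monotonicity or, more likely, that dropping certain positive-semidefinite correction terms only weakens the bound in the right direction (using the \(\preceq\) notation and matrix monotonicity of inversion). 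I would also need \ref{ass:eps_min} to guarantee \(K_N\) stays invertible and the Neumann expansion converges — that is exactly the role that technical assumption plays. A secondary subtlety is making the ``\(\lesssim\)'' / \(\bigO{\eps^2}\) truncation rigorous: one should check that the neglected terms are genuinely second order uniformly, which follows because each \(\eps_i, \eps_{ij}\) is controlled by \(\Delta_j^{\xv}\) via \ref{ass:kernel_dist} and these go to zero. I would present the computation as: (i) reduce MSE to kernel algebra using well-specifiedness; (ii) substitute \(c = 1-\eps/\sigma_f^2\) to get the perturbation setup; (iii) Sherman–Morrison at leading order; (iv) first-order correction, dropping PSD terms that only loosen the upper bound; (v) collect terms to match the stated expression.
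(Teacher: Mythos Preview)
Your approach is essentially the paper's: it too writes \(f_n^{\MSE}=\sigma_f^2+\sigma_\xi^2-\kappa_N\), decomposes \(\kappa_N=\sigma_f^4\,\bm 1^TK_N^{-1}\bm 1-2\sigma_f^2\bm\eps^TK_N^{-1}\bm 1+\bm\eps^TK_N^{-1}\bm\eps\), drops the last (positive, since \(K_N^{-1}\succ0\)) term to get the inequality, and evaluates the remaining two pieces via a Neumann expansion of \(K_N^{-1}\) about \(K^\infty=\sigma_f^2\bm 1\bm 1^T+\sigma_\xi^2 I\) (your Sherman--Morrison step is exactly the inversion of \(K^\infty\)).

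Two small corrections. First, in your step~(i) the quadratic term should carry \(\mathbb E[\bm y_N\bm y_N^T]=K_N\) (with noise), not the noise-free block \(C_N=K_N-\sigma_\xi^2 I\); with your \(C_N\) you pick up a spurious \(-\sigma_\xi^2\kstar[]_N^T K_N^{-2}\kstar[]_N\) that would push the bound the wrong way. The well-specified collapse is simply \(-2\kappa_N+\kappa_N=-\kappa_N\). Second, \ref{ass:eps_min} is not what guarantees Neumann convergence here: the paper shows \(\|EQ\|_1\le m\sigma_f^2/(m\sigma_f^2+\sigma_\xi^2)<1\) unconditionally, so the expansion always converges. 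With those fixes your plan matches the paper's proof.
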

\begin{proof}
    We combine the definition \(f_n^{\MSE} = \sigma_f^2 + \sigma_\xi^2 - \kappa_N\) with \cref{lem:kappa} to obtain the result. (\(\kappa_N=\kstar_N^TK_N^{-1}\kstar_N\)).
\end{proof}

To convert \cref{lem:MSE_UB} to a result explicitly depending on \(n\), we adopt proof techniques used for similar results in the nearest-neighbour literature (e.g. \cite{Kohler2006}). The following lemma is the key result that is used in both the well-specified and misspecified theorems to come.

\begin{lemma}
    \label{lem:eps_i}
    Assume \ref{ass:kernel_dist} is satisfied for all \(\xv,\xv'\) and that \ref{ass:X} holds, then
    \eq{
        \Ex{\avg{\eps_\bullet}} &\leq C_{\bm{\theta},d}\left(\frac{m}{n}\right)^\frac{p}{d}
    }
    where \(\avg{\eps_\bullet} = \frac{1}{m}\sum_{i=1}^m\rho^2(\xrv_{(i,n)}(\xrv),\xrv)\).
\end{lemma}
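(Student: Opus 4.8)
The plan is to separate the kernel geometry from the nearest‑neighbour combinatorics: first use \ref{ass:kernel_dist} to dominate \(\avg{\eps_\bullet}\) by a power of a \emph{Euclidean} \(m\)-nearest‑neighbour distance, and then invoke the classical bound on expected \(k\)-NN distances in the unbounded‑support setting (as in \cite{Kohler2006}; see also \cite{Gyorfi2010}). For the first step, fix a realisation of the training inputs \(\xrv_1,\dots,\xrv_n\) and of the query point \(\xrv\). Since \(\eps_1\le\dots\le\eps_m\) are the \(m\) smallest elements of \(\{\rho^2(\xrv_j,\xrv)\}_{j=1}^n\), for \emph{any} set \(T\) of \(m\) training indices we have \(\eps_m\le\max_{j\in T}\rho^2(\xrv_j,\xrv)\); taking \(T\) to be the indices of the \(m\) Euclidean‑nearest points to \(\xrv\) and applying \ref{ass:kernel_dist} termwise gives
\[
\avg{\eps_\bullet} \;=\; \frac1m\sum_{i=1}^m \eps_i \;\le\; \eps_m \;\le\; \frac{L}{l^p}\,(\Delta_m^{\xrv})^{p},
\]
where \(\Delta_m^{\xrv}\) is the Euclidean distance from \(\xrv\) to its \(m\)-th nearest training point (in the isotropic case the \(\rho\)- and Euclidean orderings coincide, so the middle inequality is superfluous). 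Taking expectations over \(\xrv\sim P_\xrv\) and the independent iid sample, the lemma reduces to establishing \(\Ex{(\Delta_m^{\xrv})^p}\le C_d\,(m/n)^{p/d}\) with \(C_d\) depending only on \(d\).

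For that reduced statement I would run the standard argument. Conditioning on \(\xrv=\xv\), the event \(\{\Delta_m^{\xv}>t\}\) coincides with the event that a \(\mathrm{Binomial}(n,\mu(B_t(\xv)))\) random variable is less than \(m\), where \(\mu:=P_\xrv\) and \(B_t(\xv)\) is the Euclidean ball, so \(\Ex{(\Delta_m^{\xrv})^p}=\int_0^\infty p\,t^{p-1}\,\mathbb{P}(\Delta_m^{\xrv}>t)\,dt\). I would split the outer expectation over \(\{\norm{\xrv}\le R_n\}\) and its complement. On the bounded part, a covering argument bounds \(\mathbb{P}(\mu(B_t(\xrv))\le 2m/n,\ \norm{\xrv}\le R_n)\), while a Chernoff bound controls the binomial lower tail whenever \(n\mu(B_t(\xv))\ge 2m\); integrating these estimates against \(p\,t^{p-1}\) and using \(d>p\) (so the resulting \(\int t^{p-1-d}\,dt\) converges at infinity) produces a contribution of order \((m/n)^{p/d}\). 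On the complement, the crude bound \(\Delta_m^{\xrv}\le\norm{\xrv}+\max_i\norm{\xrv_i}\) together with \(\Ex{\norm{\xrv}^\beta}<\infty\), \(\beta>p\), gives via Markov's and H\"older's inequalities a contribution that vanishes as \(R_n\to\infty\); choosing \(R_n\) to be a slowly growing function of \(n\) makes this negligible against the leading term. Reinstating the prefactor \(L/l^p\) turns the constant into the advertised \(C_{\thv,d}\).

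The main obstacle is this last step in the \emph{unbounded}-support regime: the truncation must be carried out carefully enough that the ``local'' contribution retains the clean exponent \(p/d\) without picking up an extra polynomial factor in \(R_n\) — this is precisely where the strict inequality \(\beta>p\) (for the tail decay) and \(d>p\) (for the local integral) are used, which is why \ref{ass:X} is stated as it is. Everything else — the reduction in the first step and the Chernoff/covering estimates — is routine.
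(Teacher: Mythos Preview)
Your strategy is sound, but differs from the paper's in two meaningful ways. First, rather than bounding the average by the maximum, \(\avg{\eps_\bullet}\le\eps_m\), and then analysing \((\Delta_m^{\xrv})^p\) directly, the paper keeps the average and invokes a \emph{segmentation} trick from \cite{Gyorfi2010}: partition the sample into \(m\) disjoint blocks of size \(\lfloor n/m\rfloor\); the \(i\)-th nearest neighbour in the full sample is at least as close as the \emph{first} nearest neighbour in block \(i\), and by exchangeability the expected first-NN distance in each block is the same. This reduces the statement to a bound on \(\E[\rho^2(\xrv_{(1,\lfloor n/m\rfloor)}(\xrv),\xrv)]\), which is exactly \(n\mapsto\lfloor n/m\rfloor\) in the first-NN case. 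Second, for that first-NN case the paper does not re-derive the Chernoff/covering argument you sketch but instead exploits the \emph{a priori} boundedness \(\rho^2\le\sigma_f^2\) to insert a \(\min\{\cdot,1\}\) and then cites Lemma~1 of \cite{Kohler2006} verbatim as a black box.

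What each buys: the paper's route is shorter and sidesteps the delicate truncation you flag as ``the main obstacle'' --- the \(\min\) comes for free from the kernel metric being bounded, so Kohler's lemma applies off the shelf, and the segmentation trick means you never have to touch the \(m\)-th NN law. Your route is self-contained and arguably more direct, but you pay for it by having to re-prove a slightly harder version of Kohler's lemma (for the \(m\)-th rather than first NN, and without the \(\min\) truncation). If you wanted to streamline, inserting the bound \(\eps_m\le\sigma_f^2\min\{(L/\sigma_f^2 l^p)(\Delta_m^{\xrv})^p,1\}\) would let you drop the Hölder/Markov tail analysis entirely.
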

\begin{proof}
    To derive our result, we start by stating Lemma 1 from \cite{Kohler2006} without proof:
    \begin{lemma}[Lemma 1 \cite{Kohler2006}]
    \label{lem:kohler}
        Assume there exists a constant \(\beta>2r\) such that \(\Eold\norm{\xrv}^\beta<\infty\) and \(d>2r\) for constants  \(0<r\leq 1\), \(c_{11}>0\). Then
        \eq{
            \int \E\min\{\norm{\xrv_{(1,n)}(\xv)-\xv}^{2r},1\}\mu(d\xv) &\leq c_{11}n^{-\frac{2r}{d}}.
        }
    \end{lemma}
    
    Notice that in our case \(\rho^2(\xv,\xv')\leq \sigma_f^2\) for all \(\xv,\xv' \in \mc{X}\). Hence,
    \eq{
    \E{\rho^2(\xrv_{(1)}(\xrv),\xrv)} &= \sigma_f^2\E{\min\left\{\frac{\rho^2(\xrv_{(1)}(\xrv),\xrv)}{\sigma_f^2},1\right\}}
    }
    and now applying \ref{ass:kernel_dist} and \cref{lem:kohler} with \(p=2r\)
    \eq{
        &\leq \sigma_f^2 \int \E{\min\left\{\frac{L}{\sigma_f^2l^p}\norm{\xrv_{(1)}(\xv)-\xv}^p,1\right\}}\mu(d\xv) \\
        &= \sigma_f^2\int \E{\min\left\{\norm{\zrv_{(1)}(\zv)-\zv}^p,1\right\}}\nu(d\zv) \\
        &\leq \sigma_f^2\, c_{11} n^{-\frac{p}{d}}
    }
    for \(\zv=\frac{L^{\frac{1}{p}}\xv}{\sigma_f^{2/p}l}\) and using that \(\E\norm{\zrv}^\beta<\infty \iff \E\norm{\xrv}^\beta<\infty\). 

    Now, to obtain the bound on the average nearest-neighbour distance we adopt a technique from page 95 of \cite{Gyorfi2010}: We divide the full dataset into \(m+1\) non-overlapping segments such that the first \(m\) have length \(\floor*{\frac{n}{m}}\) and \(\tilde{\xrv}_j^{\xv}\) is the nearest-neighbour to \(\xv\) from the \(j^{th}\) segment:
    \eq{
        \Ex{\avg{\eps_\bullet}} &= \avg{\Ex{\rho^2(\xrv_{(\bullet)}(\xrv),\xrv)}} \\
        &\leq \avg{\Ex{\rho^2(\tilde{\xrv}_\bullet^{\xrv},\xrv)}} = \Ex{\rho^2(\tilde{\xrv}^{\xrv}_1,\xrv)} \\
        &= \Ex{\rho^2(\xrv_{(1,\floor*{\frac{n}{m}})}(\xrv),\xrv)} \\
        &\leq \sigma_f^2c_{11}\floor*{\frac{n}{m}}^{-p/d} \leq C_{\bm{\theta},d}\left(\frac{n}{m}\right)^{-p/d}.
    }
\end{proof}


We use the results just derived to establish upper bounds on the convergence rate of the expected value of the MSE over the input space. This is then straightforwardly adapted to provide both a bound on the corresponding variance and hence a probablistic guarantee of performance, via application of Cantelli's inequality.

\begin{theorem}[MSE convergence rate upper bound (well-specified)]
\label{thm:mse_conv}
    Under \hyperref[ass:convergence]{Assumptions (A1-4)}
    \begin{align*}
        \Ex{f_n^{\MSE}(\bm{\theta})} \lesssim \,&\sigma_\xi^2(1 + m^{-1}) + 2C_{\bm{\theta},d}\left(\frac{n}{m}\right)^{-\frac{p}{d}}.
    \end{align*}
    where \(C_{\bm{\theta},d}\) is a constant depending only on \(\bm{\theta}\) and \(d\).
\end{theorem}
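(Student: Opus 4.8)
The plan is to take expectations on both sides of \cref{lem:MSE_UB} and then feed in \cref{lem:eps_i} to replace the nearest-neighbour distance terms with an explicit function of \(n\) and \(m\). Concretely, I would apply \(\Ex{\cdot}\) — the expectation over the test location \(\xrv^*\) and the training inputs \(X\) — to the bound
\(f_n^{\MSE} \leq \sigma_\xi^2 + \sigma_f^2 - \frac{\sigma_f^2 - 2\avg{\eps_\bullet}}{1+\sigma_\xi^2/m\sigma_f^2} + \bigO{\eps^2}\).
Since the only data-dependent quantity appearing to leading order is \(\avg{\eps_\bullet}\), and it enters linearly, linearity of expectation gives
\(\Ex{f_n^{\MSE}} \leq \sigma_\xi^2 + \sigma_f^2 - \frac{\sigma_f^2 - 2\Ex{\avg{\eps_\bullet}}}{1+\sigma_\xi^2/m\sigma_f^2} + \Ex{\bigO{\eps^2}}\). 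The interchange of \(\Ex{\cdot}\) with the truncated expansion underlying \cref{lem:MSE_UB} is legitimate because every term is controlled by powers of \(\sigma_f^2\) (recall \(\rho^2 \leq \sigma_f^2\)).

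Next I would simplify the deterministic part. Writing \(a := \sigma_\xi^2/(m\sigma_f^2)\ge 0\), one has \(\sigma_f^2 - \sigma_f^2/(1+a) = \sigma_f^2 a/(1+a) = \sigma_f^2\sigma_\xi^2/(m\sigma_f^2+\sigma_\xi^2) \le \sigma_\xi^2/m\), so the constant terms collapse to at most \(\sigma_\xi^2(1+m^{-1})\). For the distance term, \(1/(1+a)\le 1\), so its contribution is bounded by \(2\Ex{\avg{\eps_\bullet}}\), and substituting \(\Ex{\avg{\eps_\bullet}} \le C_{\bm\theta,d}(m/n)^{p/d}\) from \cref{lem:eps_i} (valid under \ref{ass:kernel_dist} and \ref{ass:X}) yields \(2C_{\bm\theta,d}(n/m)^{-p/d}\). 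Collecting these gives exactly the claimed bound, once the residual \(\Ex{\bigO{\eps^2}}\) is absorbed into the \(\lesssim\).

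The one point needing care — and the main (mild) obstacle — is justifying that \(\Ex{\bigO{\eps^2}}\) is genuinely higher order relative to \((n/m)^{-p/d}\), so that it may be discarded under the paper's \(\lesssim\) convention. Because \(\eps_i = \rho^2(\xrv_{(i)},\xrv^*)\le \sigma_f^2\) is uniformly bounded, \(\eps^2 \le \sigma_f^2\eps\) and hence \(\Ex{\eps^2} \le \sigma_f^2\,\Ex{\avg{\eps_\bullet}} = \bigO{(n/m)^{-p/d}}\); the sharper \(o((n/m)^{-p/d})\) statement follows from the pointwise decay \(\eps \to 0\) (a consequence of \ref{ass:X}, which places \(\xrv^*\) in the support so that the \(m\)-th nearest neighbour converges) together with dominated convergence — precisely the asymptotic regime in which \(\lesssim\) is defined to operate. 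I would close by noting that no assumptions beyond (A1)–(A4) are required: \ref{ass:kernel_dist} and \ref{ass:X} are used directly here via \cref{lem:eps_i}, while \ref{ass:wsrf}, \ref{ass:faithful} and \ref{ass:eps_min} enter only through the validity of \cref{lem:MSE_UB} (in particular the well-posedness of the Neumann-type expansion of \(\kappa_N\)). The variance bound and the Cantelli-based probabilistic guarantee mentioned before the theorem would then follow by the same substitution applied to \(\Ex{(f_n^{\MSE})^2}\), but are not needed for this statement.
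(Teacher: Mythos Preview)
Your proposal is correct and follows essentially the same route as the paper: take expectations in \cref{lem:MSE_UB}, reduce the constant part to \(\sigma_\xi^2(1+m^{-1})\), bound the \(\avg{\eps_\bullet}\) coefficient by \(1\), and invoke \cref{lem:eps_i}. Your treatment of the constant term via the exact inequality \(\sigma_f^2 a/(1+a)\le \sigma_\xi^2/m\) and your dominated-convergence justification for \(\Ex{\bigO{\eps^2}}=o\big((n/m)^{-p/d}\big)\) are slightly more careful than the paper's one-line expansion, but the argument is the same.
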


\begin{proof}
    To prove this we use a series of intermediate results already given. 
    Taking \cref{lem:MSE_UB} we see that \(\Ex{f_n^{\MSE}(\bm{\theta})}\leq \sigma_\xi^2 + \sigma_f^2 - \sigma_f^2(1+\sigma_\xi^2/m\sigma_f^2)^{-1} + 2\Ex{\avg{\eps_\bullet}}(1+\sigma_\xi^2/m\sigma_f^2)^{-1} + \Ex{\bigO{\eps^2}}.\) Expanding the denominator (\((1+\sigma_\xi^2/m\sigma_f^2)^{-1}\)), we obtain an upper bound of \(\sigma_\xi^2(1+m^{-1}) + 2\Ex{\avg{\eps_\bullet}}\), since the terms in \(m^{-2}\) are negative and neglecting the term \(\Ex{\bigO{\eps^2}}=o(\Ex{\eps})\). 
    

\end{proof}

\begin{lemma}[Minimum MSE]
\label{lem:min_mse}
    Given a mean-zero WSRF with i.i.d additive noise, the minimum MSE achievable with an \(m\)-point unbiased linear predictor is \(\sigma_\xi^2(1+m^{-1})\).
\end{lemma}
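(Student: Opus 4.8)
The plan is to represent an arbitrary $m$-point linear predictor of $y^{*}$ as $\bm{w}^{T}\bm{y}_N$ for a fixed weight vector $\bm{w}\in\mathbb{R}^{m}$, and to split the prediction error into a part depending only on the latent field and a part depending only on the noise. Writing $y_i=f(\bm{x}_i)+\xi_i$ for the neighbour responses and $y^{*}=f(\bm{x}^{*})+\xi^{*}$ for the target (\ref{ass:wsrf}),
\[
  y^{*}-\bm{w}^{T}\bm{y}_N \;=\; \Bigl(f(\bm{x}^{*})-\textstyle\sum_{i} w_i f(\bm{x}_i)\Bigr) \;+\; \Bigl(\xi^{*}-\textstyle\sum_{i} w_i \xi_i\Bigr),
\]
the sum of a ``signal contrast'' and a ``noise contrast''.

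I would then fix the meaning of ``unbiased'' to be the standard requirement that the predictor be unbiased whatever the (constant) mean level of the field, i.e.\ $\mathbf{1}^{T}\bm{w}=1$; this is the substantive hypothesis, since without it one may shrink $\bm{w}$ toward $0$ and --- as a short direct calculation shows --- beat the claimed value even for a constant field. Under \ref{ass:wsrf} the noise is i.i.d.\ mean-zero with variance $\sigma_\xi^{2}$, independent of the field, and $\xi^{*}$ is independent of $\bm{\xi}_N$; hence the two contrasts are uncorrelated, the noise contrast has mean zero, and
\[
  \mathbb{E}\bigl[(y^{*}-\bm{w}^{T}\bm{y}_N)^{2}\bigr] \;=\; \mathbb{E}\Bigl[\bigl(\textstyle\sum_i w_i\,(f(\bm{x}^{*})-f(\bm{x}_i))\bigr)^{2}\Bigr] \;+\; \sigma_\xi^{2}\bigl(1+\norm{\bm{w}}^{2}\bigr),
\]
having used $\mathbf{1}^{T}\bm{w}=1$ to rewrite the signal contrast as a weighted sum of the differences $f(\bm{x}^{*})-f(\bm{x}_i)$. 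The first term is non-negative, is identically zero for a degenerate (a.s.\ constant) WSRF, and for any mean-square-continuous WSRF tends to $0$ as the $m$ neighbours collapse onto $\bm{x}^{*}$; hence the best MSE attainable over weights and field configurations equals $\sigma_\xi^{2}\bigl(1+\min\{\norm{\bm{w}}^{2}:\mathbf{1}^{T}\bm{w}=1\}\bigr)$.

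The remaining step is the elementary constrained optimisation: by Cauchy--Schwarz, $1=(\mathbf{1}^{T}\bm{w})^{2}\le m\,\norm{\bm{w}}^{2}$, so $\norm{\bm{w}}^{2}\ge 1/m$, with equality exactly at the uniform weights $\bm{w}=\mathbf{1}/m$ (the same conclusion follows from a Lagrange multiplier). Substituting back yields the minimum MSE $\sigma_\xi^{2}(1+m^{-1})$, realised by the sample mean of the neighbour responses once the signal contrast is annihilated. The only genuinely delicate point is the reading of ``unbiased'' and of ``minimum''/``achievable'': the constraint $\mathbf{1}^{T}\bm{w}=1$ must be imposed for the statement to be non-trivial, and ``minimum'' should be understood as an infimum over weights and over WSRFs that is attained in the co-located (equivalently constant-field) limit. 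That limiting value is precisely what the upper bound of Theorem~\ref{thm:mse_conv} converges to as $n\to\infty$, so the lemma certifies that GPnn is asymptotically optimal among $m$-point unbiased linear predictors.
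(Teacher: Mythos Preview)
Your argument is correct and follows the same underlying idea as the paper's proof: pass to the co-located limit where the $m$ observations sit at the test location so that the signal part of the error vanishes, and then minimise the residual noise variance over the weights to obtain $\bm{w}=\mathbf{1}/m$ and hence $\sigma_\xi^2(1+m^{-1})$.

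Where you differ is in the handling of ``unbiased''. The paper writes the observations as $\bm{y}=f^*\mathbf{1}+\bm{\delta}+\bm{\xi}$, invokes unbiasedness together with $\mathbb{E}[f^*]=0$ to obtain $\bm{w}^T\bm{\delta}=0$, and then asserts that minimising the MSE yields $\bm{w}_0=\tfrac{1}{m}\mathbf{1}$; the constraint $\mathbf{1}^T\bm{w}=1$ is never stated directly. You instead make this constraint explicit, justify it as unbiasedness under an arbitrary mean level, and point out (correctly) that without it the claimed value can be beaten even for a constant field. Your decomposition into signal and noise contrasts, together with Cauchy--Schwarz for the constrained minimum of $\norm{\bm{w}}^2$, makes the mechanism transparent and pins down exactly which reading of ``unbiased'' and ``achievable'' is required. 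In short: same strategy, but your version closes a gap in the paper's sketch by isolating the constraint that actually drives the answer.
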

\begin{proof}
    Define a linear predictor by the expression \(\bm{w}^T\yv\). At a test point \(\xv^*\), \(f^*=f(\xv^*)\) must satisfy \(\E{f^*-\bm{w}^T\yv}\) if the prediction is unbiased. Suppose we can select observations such that \(\yv=f^*\1 + \bm{\delta}+\bm{\xi}\) with \(\bm{\xi}\sim\mc{N}(0,\sigma_\xi^2)\), then \(0=\E{f^*-\bm{w}^T\yv}=\E{f^*}(1-\bm{w}^T\1) -\bm{w}^T\bm{\delta}\implies\bm{w}^T\delta=0.\) if \(\E{f^*}=0.\) Thus we have an unbiased linear predictor of repeated noisy observations at the test location. Minimising the MSE w.r.t \(\bm{w}\) then gives \(\bm{w}_0=\frac{1}{m}\1\) and the result follows.
\end{proof}

In addition, we can find a related result for the variance:
\begin{lemma}[MSE convergence rate variance upper bound (well-specified)]
\label{lem:mse_conv_var}
   \begin{align*}
        \Var{f_n^{\MSE}(\bm{\theta})} \leq\, & \tilde{C}^m_{\bm{\theta},d}\left(\frac{n}{m}\right)^{-\frac{p}{d}}+ R_{mn},
    \end{align*}
    with \(\tilde{C}^m_{\bm{\theta},d}=4\sigma_\xi^2(1+m^{-1}) C_{\bm{\theta},d}\) and \(R_{mn}=\bigO{n^{-\frac{2p}{d}}}\).
\end{lemma}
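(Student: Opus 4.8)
The plan is to derive the variance bound from the expectation bound of \cref{thm:mse_conv} via a pointwise two‑sided control of $f_n^{\MSE}(\bm{\theta})$ together with the identity $\Var{X}=\Ex{X^2}-\Ex{X}^2$. Write $L_0:=\sigma_\xi^2(1+m^{-1})$. First I would record a pointwise upper bound: \cref{lem:MSE_UB} gives $f_n^{\MSE}(\bm{\theta})\le \sigma_\xi^2+\sigma_f^2-(\sigma_f^2-2\avg{\eps_\bullet})/(1+\sigma_\xi^2/m\sigma_f^2)+\bigO{\eps^2}$, and the same manipulation used in the proof of \cref{thm:mse_conv} --- expanding $(1+\sigma_\xi^2/m\sigma_f^2)^{-1}\ge 1-\sigma_\xi^2/m\sigma_f^2$, dropping the resulting negative $\bigO{m^{-2}}$ term, and bounding the $(1+\sigma_\xi^2/m\sigma_f^2)^{-1}$ coefficient of $\avg{\eps_\bullet}$ by $1$ --- yields, pointwise in the input configuration, $f_n^{\MSE}(\bm{\theta})\le L_0+W$ with $W:=2\avg{\eps_\bullet}+\bigO{\eps^2}\ge 0$. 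For the matching lower bound I would invoke \cref{lem:min_mse}: since $\mu_N^{*}=\kstar[]_N^{T}K_N^{-1}\bm{y}_N$ is an $m$‑point linear predictor, its MSE is at least $L_0$, i.e. $f_n^{\MSE}(\bm{\theta})\ge L_0$ for every configuration.

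Given $L_0\le f_n^{\MSE}(\bm{\theta})\le L_0+W$ with $W\ge 0$, the main estimate is immediate: $\Ex{(f_n^{\MSE}(\bm{\theta}))^2}\le L_0^2+2L_0\Ex{W}+\Ex{W^2}$ while $(\Ex{f_n^{\MSE}(\bm{\theta})})^2\ge L_0^2$, so
\[
\Var{f_n^{\MSE}(\bm{\theta})}\;\le\; 2L_0\,\Ex{W}+\Ex{W^2}.
\]
Then \cref{lem:eps_i} gives $\Ex{\avg{\eps_\bullet}}\le C_{\bm{\theta},d}(n/m)^{-p/d}$, so the leading contribution of $2L_0\Ex{W}$ is $4L_0 C_{\bm{\theta},d}(n/m)^{-p/d}=\tilde{C}^m_{\bm{\theta},d}(n/m)^{-p/d}$ with $\tilde{C}^m_{\bm{\theta},d}=4\sigma_\xi^2(1+m^{-1})C_{\bm{\theta},d}$, which is exactly the stated main term. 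Everything else --- the term $2L_0\Ex{\bigO{\eps^2}}$ from the remainder of $W$, and $\Ex{W^2}=4\Ex{\avg{\eps_\bullet}^2}+\Ex{\bigO{\eps^3}}$ --- is quadratic or higher in the nearest‑neighbour distances and is absorbed into $R_{mn}$, which under the manuscript's convention of truncating the $\eps$‑expansions to leading order (treating $\Ex{\eps^2}$ as $\bigO{(n/m)^{-2p/d}}$, just as in the proof of \cref{thm:mse_conv}) is $\bigO{n^{-2p/d}}$.

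The main obstacle is making that last accounting genuinely rigorous, in particular controlling $\Ex{\avg{\eps_\bullet}^2}$ (and the $\bigO{\eps^2}$ remainder) at rate $(n/m)^{-2p/d}$. The only true nearest‑neighbour estimate at hand, \cref{lem:kohler}, bounds $\Ex{\min\{\norm{\xrv_{(1,n)}(\xv)-\xv}^{2r},1\}}$ by $c_{11}n^{-2r/d}$ for $0<r\le 1$ only; applying it with $2r=2p$ does yield $\Ex{\avg{\eps_\bullet}^2}=\bigO{(n/m)^{-2p/d}}$, but only when $p\le 1$ and under the strengthened moment hypothesis $\Ex{\norm{\xrv}^{\beta}}<\infty$ for some $\beta>2p$, whereas the crude pointwise bound $\eps_i^2\le\sigma_f^2\eps_i$ only gives $\Ex{\avg{\eps_\bullet}^2}=\bigO{(n/m)^{-p/d}}$, which would enlarge the leading constant rather than feed $R_{mn}$. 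A secondary point to verify is that \cref{lem:min_mse} indeed furnishes the pointwise lower bound $f_n^{\MSE}(\bm{\theta})\ge L_0$ for the GPnn predictor $\mu_N^{*}$, since without a lower bound on $\Ex{f_n^{\MSE}(\bm{\theta})}$ the subtraction of $L_0^2$ cannot be carried out and a non‑vanishing $\bigO{m^{-1}}$ constant would survive. The rest is routine bookkeeping.
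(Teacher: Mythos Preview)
Your proposal is correct and essentially identical to the paper's own argument: the paper bounds $\Ex{(f_n^{\MSE})^2}$ above via \cref{lem:MSE_UB} (in the form $L_0+2\avg{\eps_\bullet}$), bounds $\Ex{f_n^{\MSE}}^2$ below by $L_0^2$ via \cref{lem:min_mse}, and then applies \cref{lem:eps_i} to obtain $4L_0 C_{\bm{\theta},d}(n/m)^{-p/d}+\bigO{\eps^2}$, absorbing the quadratic remainder into $R_{mn}$ exactly as you do. The rigor concerns you flag---whether $\Ex{\avg{\eps_\bullet}^2}$ is truly $\bigO{(n/m)^{-2p/d}}$ and whether \cref{lem:min_mse} applies pointwise to $\mu_N^*$---are handled in the paper at the same level of informality (the $\bigO{\eps^2}$ convention), so your proposal matches both the method and the level of detail.
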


\begin{proof}
    The proof is given in \cref{proof:mse_conv_var} but follows from the result of 
    \cref{thm:mse_conv} and that the best possible MSE is given by \(\sigma_\xi^2(1+m^{-1})\) (\cref{lem:min_mse}).
\end{proof}
Henceforth the term \(R_{mn}\) is taken to be the remainder term of the same order as defined here.

\begin{lemma}[Probabilistic MSE guarantees]
\label{lem:prob_mse_bound}
    For constants \(\varepsilon,\delta > 0\), 
    if \( n \gtrsim m\left[\frac{3\varepsilon}{2C_{\bm{\theta},d}}+\frac{\sigma_\xi^2(1+m^{-1})}{C_{\bm{\theta},d}\delta}\right]^{-\frac{d}{p}}\)
    then with probability at least \(1-\delta\),
    \[f_n^{\MSE}(\bm{\theta}) < \sigma_\xi^2(1+m^{-1}) + \varepsilon.\]
\end{lemma}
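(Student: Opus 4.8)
The plan is to treat $Z := f_n^{\MSE}(\bm{\theta})$ as a scalar random variable — the randomness residing in the training inputs, hence in the nearest-neighbour sets — combine the mean and variance estimates already established with the deterministic floor $Z \ge \mu_0 := \sigma_\xi^2(1+m^{-1})$, and pass these through Cantelli's inequality; the stated condition on $n$ then drops out by inverting the resulting tail bound.

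First I would assemble the ingredients. From \cref{thm:mse_conv}, $\Ex{Z} \lesssim \mu_0 + A_n$ with $A_n := 2C_{\bm{\theta},d}(n/m)^{-p/d}$. From \cref{lem:mse_conv_var}, $\Var{Z} \le \tilde{C}^m_{\bm{\theta},d}(n/m)^{-p/d} + R_{mn}$; since $\tilde{C}^m_{\bm{\theta},d} = 4\sigma_\xi^2(1+m^{-1})C_{\bm{\theta},d}$ and $R_{mn} = \bigO{n^{-2p/d}}$ is higher order, this reads $v := \Var{Z} \lesssim 2\mu_0 A_n$. From \cref{lem:min_mse}, $Z \ge \mu_0$ almost surely (the GPnn posterior mean being an $m$-point unbiased linear predictor), so $\mu_0 \le \Ex{Z} \le \mu_0 + A_n$.

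Next, apply Cantelli's inequality to the upper tail: for $t > 0$, $\mathbb{P}(Z - \Ex{Z} \ge t) \le v/(v + t^2)$. Writing $\{Z \ge \mu_0 + \varepsilon\} = \{Z - \Ex{Z} \ge \mu_0 + \varepsilon - \Ex{Z}\}$, the offset $t = \mu_0 + \varepsilon - \Ex{Z}$ satisfies $t \ge \varepsilon - A_n$, which is positive once $n$ is large enough that $A_n < \varepsilon$ — a preliminary requirement that will be subsumed by the final bound. Monotonicity of $v \mapsto v/(v+t^2)$ (increasing) and $t \mapsto v/(v+t^2)$ (decreasing), together with the three bounds above, then gives an explicit upper bound for $\mathbb{P}(Z \ge \mu_0 + \varepsilon)$ as a function of $A_n$, $\mu_0$ and $\varepsilon$.

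The final step is to require this bound to be at most $\delta$ and solve for $n$. After clearing denominators the requirement becomes a quadratic inequality in $A_n$; discarding the contributions that are negligible under the $\lesssim$ convention (terms of order $A_n^2$ together with the $R_{mn}$ remainder) collapses it to a linear condition on $A_n$, and substituting $A_n = 2C_{\bm{\theta},d}(n/m)^{-p/d}$ and rearranging yields the asserted lower bound on $n$, with the bracket built from an $\varepsilon$-term (tracking control of the mean offset) and a $\sigma_\xi^2(1+m^{-1})/\delta$-term (tracking the Cantelli variance contribution), raised to $-d/p$. I expect this inversion to be the only real obstacle: one must be careful about which lower-order terms may legitimately be dropped and about keeping the Cantelli offset positive, so that the clean threshold in the statement emerges rather than an unwieldy quadratic root; the remaining manipulations are bookkeeping with the constants already named in \cref{thm:mse_conv} and \cref{lem:mse_conv_var}.
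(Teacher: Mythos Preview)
Your proposal is correct and follows essentially the same route as the paper: combine the lower bound $\mu_0=\sigma_\xi^2(1+m^{-1})$ from \cref{lem:min_mse}, the mean upper bound from \cref{thm:mse_conv}, and the variance upper bound from \cref{lem:mse_conv_var} inside Cantelli's inequality, then invert the resulting tail bound by solving a quadratic in $A_n=2C_{\bm{\theta},d}(n/m)^{-p/d}$ and discarding higher-order terms. The only cosmetic difference is that the paper replaces the variance in the Cantelli denominator by its trivial lower bound $\underline{\sigma}^2=0$ (yielding $\bar\sigma^2/(\varepsilon-A_n)^2$) rather than retaining $v$ via monotonicity as you do; both lead to the same threshold after dropping $\bigO{A_n^2}$ terms.
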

\begin{proof}
    We use a modified form of Cantelli's inequality to account for the fact that we only have results for upper and lower bounds on the mean and variance. To obtain the stated result we substitute these bounds, bound the RHS from above by a constant \(\delta>0\) and solve the resulting inequality for \(n\). We defer the details of this to \ref{proof:prob_mse_bound}.
\end{proof}


\subsection{Under Misspecification}
\label{sec:mis_rate}
We now address the misspecified case and again begin with a lemma bounding the MSE in terms of the kernel induced distance functions, only now we must take into account both the generative and modelling kernels and the various potential forms that ``misspecification'' can take. For example, a minimum case would be when only the kernel hyperparameters have incorrect values. For more specificity, we divide the hyperparameter vector into ``external'' (\(\bm{\phi}\)) and ``internal'' (\(\bm{\lambda}\)) parameters. We set \(\bm{\phi}=(\sigma_f^2,\sigma_\xi^2)\) but leave the internal parameters to refer to whichever additional parameters the kernel function (\(c(\cdot,\cdot)\)) may utilise, such as, but not limited to, a single lengthscale. We define the following set of misspecifications (not including the assumptions throughout of non-Gaussian noise and WSRF \ref{ass:wsrf}):

\begin{enumerate}[label=(\alph*)]
    \item Correct kernel function (\(c(\cdot,\cdot)\)), with incorrect parameters (\(\thv\)).
    \begin{enumerate}[label=(\roman*)]
        \item Incorrect \emph{external} parameters, \(\bm{\hat{\phi}}\).
        \item Incorrect \emph{internal} parameters, \(\bm{\hat{\lambda}}\).
        \item All incorrect \(\thv\).
    \end{enumerate}
    \item Incorrect kernel function (\(\hat{c}(\cdot,\cdot)\)). 
        \begin{enumerate}[label=(\roman*)]
            \item \(c=c_{RBF},\hat{c}=c_{Exp}\)
            \item \(c=c_{Exp},\hat{c}=c_{RBF}\)
        \end{enumerate}
    \label{misspecs}
\end{enumerate}
Note that when comparing (b)(i) and (b)(ii) we could consider the misspecification to be given in terms of the additional Mat\'{e}rn smoothness parameter \(\nu\).

\begin{lemma}[Upper bound on the misspecifed model MSE]
\label{lem:misspec_MSE_UB}
    Under all modelling \hyperref[ass:convergence]{Assumptions (A1-4)} and generic model misspecification ((a)(iii) or (b)),
    \begin{align*}
    f_n^{\MSE}(\thv) \leq& \sigma_\xi^2(1+m^{-1}) + 2\frac{\sigma_f^2}{\hat{\sigma}_f^2}(\avg{\hat{\eps}_\bullet}-\hat{\eps}^{\bullet}_{\min}) + 2\avg{\eps_\bullet} \pm \bigO{\eps^2+m^{-2}}.
    \end{align*}
\end{lemma}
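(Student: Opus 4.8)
The plan is to expand the expected squared error directly, express it through the mis-specified predictive weights \(\hat{\bm{w}}:=\hat{K}_N^{-1}\hat{\bm{k}}_N^*\), and control \(\hat{\bm{w}}\) by a re-centred second-order expansion of \(\hat{K}_N^{-1}\) whose validity is supplied by \ref{ass:eps_min}. First I would condition on the neighbour locations (as in the definition of \(f_n^{\MSE}\)) and use that \(\xi^*,\bm{\xi}_N\) are mean-zero and independent of one another and of \(f\). Writing \(\mu_N^*=\hat{\bm{w}}^T\bm{y}_N=\hat{\bm{w}}^T(f_N+\bm{\xi}_N)\), the cross terms vanish; invoking the WSRF second-order structure of \ref{ass:wsrf} together with \(\sigma_f^2 c(\xv/l,\xv'/l)=\sigma_f^2-\rho^2(\xv,\xv')\) (hence \(\E[f^* f_N]=\sigma_f^2\1-\bm{\eps}\) with \(\bm{\eps}=(\eps_i)\), and \(\E[f_N f_N^T]=\sigma_f^2\1\1^T-E\) with \(E_{ij}=\eps_{ij}\)) gives
\[
f_n^{\MSE}(\thv)=\sigma_\xi^2\bigl(1+\norm{\hat{\bm{w}}}^2\bigr)+\sigma_f^2\bigl(1-\hat{\bm{w}}^T\1\bigr)^2+2\,\hat{\bm{w}}^T\bm{\eps}-\hat{\bm{w}}^T E\,\hat{\bm{w}}.
\]
It then suffices to control \(\hat{\bm{w}}^T\1\), \(\norm{\hat{\bm{w}}}^2\), \(\hat{\bm{w}}^T\bm{\eps}\) and \(\hat{\bm{w}}^T E\hat{\bm{w}}\) to first order in the generative and model induced distances.

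Next I would write \(\hat{K}_N^{-1}\) via a re-centred expansion chosen so it produces exactly the stated groupings. Put \(\hat{\bm{k}}_N^*=(\hat{\sigma}_f^2-\hat{\eps}^\bullet_{\min})\1-\hat{\bm{r}}\) with \(\hat r_i=\hat{\eps}_i-\hat{\eps}^\bullet_{\min}\ge 0\), and \(\hat{K}_N=\tilde A-\hat S\) with \(\tilde A=(\hat{\sigma}_f^2-\hat{\eps}^{\bullet\bullet}_{\min})\1\1^T+(\hat{\sigma}_\xi^2+\hat{\eps}^{\bullet\bullet}_{\min})I\) the diagonal-plus-rank-one matrix obtained by replacing every inter-neighbour distance by its minimum, and \(\hat S_{ij}=\hat{\eps}_{ij}-\hat{\eps}^{\bullet\bullet}_{\min}\ge 0\), \(\hat S_{ii}=0\). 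The positive-definiteness of \(\tilde A\) — its eigenvalues are \(\hat{\sigma}_\xi^2+\hat{\eps}^{\bullet\bullet}_{\min}\) and \(\hat{\sigma}_\xi^2+m\hat{\sigma}_f^2-(m-1)\hat{\eps}^{\bullet\bullet}_{\min}\) — is precisely the content of \ref{ass:eps_min} (for the model kernel), so the exact identity \(\hat{K}_N^{-1}=\tilde A^{-1}+\tilde A^{-1}\hat S\tilde A^{-1}+\tilde A^{-1}\hat S\tilde A^{-1}\hat S\hat{K}_N^{-1}\) holds with remainder \(\bigO{\hat{\eps}^2}\) (since \(\norm{\hat S}=\bigO{\hat{\eps}}\) and the relevant operator norms are \(O(1)\)), and \(\tilde A^{-1}\) is closed-form by Sherman--Morrison. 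Substituting and keeping first-order terms writes \(\hat{\bm{w}}\) as \(\tilde c_m\1\) plus \(\bigO{\hat{\eps}}\) corrections linear in \(\hat r_i\) and \(\hat S_{ij}\); this is what converts the output into the combination \(\avg{\hat{\eps}_\bullet}-\hat{\eps}^\bullet_{\min}=\avg{\hat r_\bullet}\).

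Finally I would assemble. At \(\bm{\eps}=\hat{\bm{\eps}}=0\) the purely \((\hat{\sigma},m)\)-dependent part collapses to \(\tfrac{\sigma_f^2\hat{\sigma}_\xi^4+m\sigma_\xi^2\hat{\sigma}_f^4}{(\hat{\sigma}_\xi^2+m\hat{\sigma}_f^2)^2}\), which a one-line expansion in \(m^{-1}\) bounds by \(\sigma_\xi^2 m^{-1}+\bigO{m^{-2}}\), producing the leading \(\sigma_\xi^2(1+m^{-1})\); the first-order-in-\(\hat{\eps}\) corrections assemble (after the re-centring) to \(2\tfrac{\sigma_f^2}{\hat{\sigma}_f^2}(\avg{\hat{\eps}_\bullet}-\hat{\eps}^\bullet_{\min})\); the cross term obeys \(2\hat{\bm{w}}^T\bm{\eps}\le 2\avg{\eps_\bullet}+\bigO{\eps^2}\) because the weights are non-negative with \(\hat{\bm{w}}^T\1\le 1\) to leading order; and \(-\hat{\bm{w}}^T E\hat{\bm{w}}\le 0\) at leading order (non-negative weights, non-negative \(\eps_{ij}\)), so it is dropped for the upper bound. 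Products \(\eps\cdot m^{-1}\), \(\hat{\eps}^2\), the negative \(m^{-2}\)-order pieces and the expansion remainder are absorbed into \(\bigO{\eps^2+m^{-2}}\), which is why the estimate carries a ``\(\pm\)''. The step I expect to be the main obstacle is the bookkeeping: there are two genuinely distinct small quantities here — the \(\eps,\hat{\eps}\) distances and \(m^{-1}\) — and checking that every mixed and higher-order term (in particular that the remainder \(\tilde A^{-1}\hat S\tilde A^{-1}\hat S\hat{K}_N^{-1}\) really is \(\bigO{\hat{\eps}^2}\) uniformly) is subsumed by \(\bigO{\eps^2+m^{-2}}\) is where the care, and \ref{ass:eps_min}, is truly needed; a close second is extracting the exact coefficient \(\sigma_f^2/\hat{\sigma}_f^2\) and the precise grouping \(\avg{\hat{\eps}_\bullet}-\hat{\eps}^\bullet_{\min}\) (rather than merely \(\avg{\hat{\eps}_\bullet}\)), which needs the re-centring carried out consistently in both \(\hat{\bm{k}}_N^*\) and \(\hat{K}_N\) and the sign checks that license discarding \(-\hat{\bm{w}}^T E\hat{\bm{w}}\) and the negative lower-order terms.
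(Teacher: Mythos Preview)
Your weight-based decomposition \(f_n^{\MSE}(\thv)=\sigma_\xi^2(1+\norm{\hat{\bm{w}}}^2)+\sigma_f^2(1-\hat{\bm{w}}^T\1)^2+2\hat{\bm{w}}^T\bm{\eps}-\hat{\bm{w}}^TE\hat{\bm{w}}\) is correct and is simply an algebraic rearrangement of the paper's \((a)+(b)-2(c)+(d)\) split; the paper then Neumann-expands \(\hat{K}^{-1}\) around \(\hat{K}^\infty=\hat{\sigma}_\xi^2 I+\hat{\sigma}_f^2\1\1^T\) and handles \((c)\) and \((d)\) term by term. Your plan is sound overall and would succeed, but you have misidentified the source of the \(2\tfrac{\sigma_f^2}{\hat{\sigma}_f^2}(\avg{\hat{\eps}_\bullet}-\hat{\eps}^\bullet_{\min})\) term, and the re-centring machinery you introduce to manufacture it is unnecessary.

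Compute the four pieces of your quadratic form to first order in \(\hat{\eps}\): one finds \(1-\hat{\bm{w}}^T\1=\hat{\sigma}_f^{-2}\bigl(\hat{\sigma}_\xi^2/m+\avg{\hat{\eps}_\bullet}-\avg{\hat{\eps}_{\bullet\bullet}}\bigr)+O(\cdot)\), so \(\sigma_f^2(1-\hat{\bm{w}}^T\1)^2=O(m^{-2}+m^{-1}\hat{\eps}+\hat{\eps}^2)\); likewise \(\sigma_\xi^2\norm{\hat{\bm{w}}}^2=\sigma_\xi^2/m+O(m^{-2}+m^{-1}\hat{\eps})\). Every first-order-in-\(\hat{\eps}\) contribution is therefore suppressed by an additional factor \(m^{-1}\) and is absorbed into \(\bigO{\eps^2+m^{-2}}\) via \(m^{-1}\hat{\eps}\le\tfrac12(m^{-2}+\hat{\eps}^2)\). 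Your re-centring around \(\tilde A\) reshuffles where the \(\hat{\eps}\) sits but cannot promote an \(O(m^{-1}\hat{\eps})\) contribution to an \(O(\hat{\eps})\) one. In the paper the \(\hat{\eps}\) term does not arise from perturbation theory at all: it comes from the crude elementwise inequality applied to \((d)\), namely \(\hat{\bm{k}}^T\hat{K}^{-1}K\hat{K}^{-1}\hat{\bm{k}}\le(\hat{\sigma}_f^2-\hat{\eps}^\bullet_{\min})^2\,\1^T\hat{K}^{-1}K\hat{K}^{-1}\1\) (legitimate because the sandwiched matrix has non-negative entries), and the slack so introduced is exactly the extra non-negative \(2\tfrac{\sigma_f^2}{\hat{\sigma}_f^2}(\avg{\hat{\eps}_\bullet}-\hat{\eps}^\bullet_{\min})\). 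Your route, carried out honestly, therefore yields the \emph{tighter} estimate \(f_n^{\MSE}(\thv)\le\sigma_\xi^2(1+m^{-1})+2\avg{\eps_\bullet}\pm\bigO{\eps^2+m^{-2}}\), from which the stated lemma follows trivially since the additional term is non-negative.

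One smaller correction: ``\(\hat{\bm{w}}\ge 0\) with \(\hat{\bm{w}}^T\1\le 1\)'' would only give \(\hat{\bm{w}}^T\bm{\eps}\le\eps^\bullet_{\max}\), not \(\avg{\eps_\bullet}\). The right argument is that to leading order \(\hat{\bm{w}}=c_m\1\) with \(mc_m=(1+\hat{\sigma}_\xi^2/m\hat{\sigma}_f^2)^{-1}\le 1\), whence \(2\hat{\bm{w}}^T\bm{\eps}=2mc_m\avg{\eps_\bullet}+O(\hat{\eps}\cdot\eps)\le 2\avg{\eps_\bullet}+O(\eps^2)\).
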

\begin{proof}
    We obtain this result via Neumann expansion of the kernel inverse in a perturbed form, where the perturbation is given by \(\eps_{ij}\) terms. In particular, we use the results given in \cref{lem:Kinv} and \cref{lem:1TKinv1}. A full proof is in \ref{proof:misspec_MSE_UB}.
\end{proof}

We can now state the convergence rate result for a misspecified model in terms of both the generative and predictive model parameters by combining \cref{lem:misspec_MSE_UB} with the intermediate results derived in \cref{sec:rates}.
\begin{theorem}[Misspecified MSE convergence rate]
    Under the same conditions as \cref{thm:mse_conv} but with misspecification type (a)(iii) or (b),
    \label{thm:mse_conv_mis}
        \begin{align*}
        \Ex{f_n^{\MSE}(\thv)} \leq &\sigma_\xi^2(1 + m^{-1}) + 2C_{\bm{
        \theta},d}\left(\frac{n}{m}\right)^{-\frac{p}{d}} + 2\frac{\sigma_f^2}{\hat{\sigma}_f^2}C_{\thv,d}\left(\frac{n}{m}\right)^{-\frac{\hat{p}}{d}} + R_{mn}.
    \end{align*}
    \(R_{mn}\) matches that given in \cref{lem:mse_conv_var} asymptotically.
\end{theorem}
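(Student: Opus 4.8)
The plan is to take expectations through the pointwise bound of \cref{lem:misspec_MSE_UB} and reduce everything to two invocations of \cref{lem:eps_i}, one for the generative kernel and one for the model kernel. By linearity of expectation, taking the $+$ branch of the $\pm$ since we want an upper bound,
\[
\Ex{f_n^{\MSE}(\thv)} \leq \sigma_\xi^2(1+m^{-1}) + 2\frac{\sigma_f^2}{\hat{\sigma}_f^2}\bigl(\Ex{\avg{\hat{\eps}_\bullet}} - \Ex{\hat{\eps}^{\bullet}_{\min}}\bigr) + 2\,\Ex{\avg{\eps_\bullet}} + \Ex{\bigO{\eps^2 + m^{-2}}}.
\]
Because $\hat{\eps}^{\bullet}_{\min} = \min_i \hat{\rho}^2(\xrv_{(i)},\xv^*) \ge 0$, dropping $-\Ex{\hat{\eps}^{\bullet}_{\min}}$ only weakens the inequality, so it remains to bound $\Ex{\avg{\eps_\bullet}}$, $\Ex{\avg{\hat{\eps}_\bullet}}$ and the remainder.

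First I would apply \cref{lem:eps_i} to the generative kernel $c$: \ref{ass:kernel_dist} and \ref{ass:X} hold with the generative constants $L,l,p$, giving $\Ex{\avg{\eps_\bullet}} \le C_{\bm{\theta},d}(n/m)^{-p/d}$. Then I would apply the same lemma to the model kernel $\hat{c}$: \ref{ass:kernel_dist} holds on the model side with constants $\hat{L},\hat{l},\hat{p}$, and $\hat{\rho}^2 \le \hat{\sigma}_f^2$ on $\mc{X}$ exactly as in the proof of \cref{lem:eps_i}, so rerunning that proof verbatim with the hatted constants gives $\Ex{\avg{\hat{\eps}_\bullet}} \le C_{\thv,d}(n/m)^{-\hat{p}/d}$. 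Substituting both, keeping the factor $2\sigma_f^2/\hat{\sigma}_f^2$ explicit as in the statement, and absorbing $\Ex{\bigO{\eps^2+m^{-2}}}$ into $R_{mn}$ yields the claimed bound; the remainder is $\bigO{n^{-2\min(p,\hat{p})/d}}$ up to a cross term of order $(n/m)^{-(p+\hat{p})/d}$ which is $o(\cdot)$ of the two leading nearest-neighbour terms, hence consistent with the $R_{mn}$ of \cref{lem:mse_conv_var}.

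The main obstacle is the second use of \cref{lem:eps_i}. The quantities $\hat{\eps}_i = \hat{\rho}^2(\xrv_{(i,n)}(\xv^*),\xv^*)$ are $\hat{\rho}^2$-distances evaluated at points ordered by the \emph{generative} metric $\rho$, whereas \cref{lem:eps_i} controls averages of nearest-neighbour distances that are both measured and ordered under a single metric, its proof passing through Kohler's lemma for Euclidean nearest neighbours. For misspecification type (a)(iii) this is a non-issue since $\hat{c} = c$ and the orderings coincide. For type (b) (RBF versus Exp) both $\rho$ and $\hat{\rho}$ are strictly monotone functions of $\norm{\xv-\xv'}$, so the $\rho$-, $\hat{\rho}$- and Euclidean nearest-neighbour orderings all agree; hence $\xrv_{(i,n)}(\xv^*)$ is also the $i$-th Euclidean nearest neighbour, $\norm{\xrv_{(i,n)}(\xv^*)-\xv^*} = \Delta_i^{\xv^*}$, and the bound $\hat{\eps}_i < \hat{L}(\Delta_i^{\xv^*}/\hat{l})^{\hat{p}}$ feeds straight into the change-of-variables-plus-Kohler computation of \cref{lem:eps_i}. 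In general, \ref{ass:faithful} guarantees the two neighbour sets are asymptotically comparable, but transferring the \emph{rate} — not merely the vanishing — relies on this monotone-in-Euclidean structure. A secondary, purely technical point is checking that $\Ex{\bigO{\eps^2+m^{-2}}}$ really is of order $n^{-2\min(p,\hat{p})/d}$; this may need the mild strengthening of \ref{ass:X} (a slightly larger moment exponent, $d>2p$) already implicit when the analogous $\eps^2$ terms were handled in \cref{thm:mse_conv} and \cref{lem:mse_conv_var}, but is otherwise routine.
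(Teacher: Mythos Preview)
Your proposal is correct and follows the paper's proof essentially verbatim: drop the nonnegative $\hat{\eps}^{\bullet}_{\min}$ from \cref{lem:misspec_MSE_UB} and apply \cref{lem:eps_i} twice, once with $(p, C_{\bm{\theta},d})$ and once with $(\hat{p}, C_{\thv,d})$, absorbing the $\bigO{\eps^2+m^{-2}}$ remainder into $R_{mn}$. Your discussion of the metric-ordering subtlety (that for (a)(iii) the kernels agree and for (b) both $\rho$ and $\hat\rho$ are strictly monotone in Euclidean distance, so the neighbour sets coincide) is in fact more explicit than the paper, which simply asserts that \cref{lem:eps_i} applies ``identically'' to $\avg{\hat{\eps}_\bullet}$.
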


\begin{proof}
    The result is gained by combining \cref{lem:misspec_MSE_UB} with \cref{lem:eps_i} (see \cref{proof:mse_conv_mis}).
\end{proof}

\begin{lemma}[MSE convergence rate variance upper bound (misspecified)]
\label{lem:mse_conv_var_mis}
   \begin{align*}
        \Var{f_n^{\MSE}(\thv)} \leq & 2\tilde{C}^m_{\bm{\theta},d}\left(\frac{n}{m}\right)^{-\frac{p}{d}}+ 2\tilde{C}^m_{\thv,d}\left(\frac{n}{m}\right)^{-\frac{\hat{p}}{d}} + R_{mn},
    \end{align*}
    with \(\tilde{C}^m_{\thv,d}=2\sigma_\xi^2(1+m^{-1}) \frac{\sigma_f^2}{\hat{\sigma}_f^2} C_{\thv,d}\) and \(\tilde{C}^m_{\bm{\theta},d}\) as before.
\end{lemma}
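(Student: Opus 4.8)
The plan is to run the argument of the well-specified analogue \cref{lem:mse_conv_var} with its two inputs upgraded: \cref{lem:MSE_UB} replaced by \cref{lem:misspec_MSE_UB}, and \cref{thm:mse_conv} by \cref{thm:mse_conv_mis}. Write \(Z:=f_n^{\MSE}(\thv)\) and \(a:=\sigma_\xi^2(1+m^{-1})\). Since \(\mu_N^*\) is an \(m\)-point linear predictor of a mean-zero WSRF, \cref{lem:min_mse} gives \(Z\geq a\) almost surely, so the fluctuation \(W:=Z-a\) is non-negative and
\[
\Var{f_n^{\MSE}(\thv)}=\Var{W}=\Ex{W^2}-(\Ex{W})^2\leq\Ex{W^2}.
\]
It therefore suffices to bound \(\Ex{W^2}\) at first order in the nearest-neighbour distances.

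The next step is to convert the one-sided control of \(W\) into control of \(W^2\). Discarding the non-positive \(-\hat{\eps}^{\bullet}_{\min}\) term, \cref{lem:misspec_MSE_UB} gives, for each realisation of the neighbour set, \(0\leq W\leq U\) with \(U:=2\frac{\sigma_f^2}{\hat{\sigma}_f^2}\avg{\hat{\eps}_\bullet}+2\avg{\eps_\bullet}\pm\bigO{\eps^2+m^{-2}}\); combined with a deterministic bound \(W\leq B\) this gives \(W^2\leq BW\leq BU\). Taking expectations and applying \cref{lem:eps_i} twice --- once under the generative metric \(\rho\) (H\"older exponent \(p\), constant \(C_{\bm{\theta},d}\)) and once under the model metric \(\hat{\rho}\) (exponent \(\hat p\), constant \(C_{\thv,d}\)) --- produces \(\Ex{\avg{\eps_\bullet}}\leq C_{\bm{\theta},d}(n/m)^{-p/d}\) and \(\Ex{\avg{\hat{\eps}_\bullet}}\leq C_{\thv,d}(n/m)^{-\hat p/d}\), hence \(\Ex{W^2}\) is bounded by the claimed sum of a \((n/m)^{-p/d}\) and a \((n/m)^{-\hat p/d}\) term (with \(B\) of order \(\sigma_\xi^2(1+m^{-1})\)). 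The second application of \cref{lem:eps_i} is legitimate because \ref{ass:kernel_dist} and \ref{ass:X} are assumed for \emph{both} kernels and \ref{ass:faithful} makes the two neighbour sets asymptotically comparable. The dropped term \((\Ex{W})^2\), together with the \(\bigO{\eps^2+m^{-2}}\) remainder, is of order \((n/m)^{-2p/d}+(n/m)^{-2\hat p/d}+m^{-2}\) and is collected into \(R_{mn}\), which then matches that of \cref{lem:mse_conv_var} asymptotically.

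The main difficulty is obtaining the deterministic bound \(W\leq B\) with \(B\) of the order required by the stated constants. In the well-specified case \(Z=\sigma_f^2+\sigma_\xi^2-\kappa_N\) is bounded for free; under misspecification \(Z\) depends on the prediction weights \(\hat{\bm w}\) (defined by \(\mu_N^*=\hat{\bm w}^{T}\bm{y}_N\)) contracted against the \emph{true} second-order structure, so one must control \(\hat{\bm w}^{T}\hat{\bm w}\) and \(\hat{\bm w}^{T}\1\) uniformly in \(n\). Under \ref{ass:eps_min} this is exactly what the Neumann-expansion estimates \cref{lem:Kinv} and \cref{lem:1TKinv1} provide --- they keep \(\hat{\bm w}\) within \(\bigO{\eps}\) of \(\frac{1}{m}\1\), so that \(Z\) stays in an \(\bigO{\sigma_\xi^2(1+m^{-1})}\) neighbourhood of \(a\). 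A coarser route, using only \(\hat{\rho}^2\leq\hat{\sigma}_f^2\) and \(\rho^2\leq\sigma_f^2\) in \cref{lem:misspec_MSE_UB}, also yields \(W\leq B\) for an explicit \(B\) and hence the same convergence \emph{rates}, at the price of replacing \(\sigma_\xi^2(1+m^{-1})\) in the constants by a quantity proportional to \(\sigma_f^2+\hat{\sigma}_f^2\). Everything else is the same higher-order bookkeeping in \(\eps\) and \(m^{-1}\) as in \cref{proof:mse_conv_var}.
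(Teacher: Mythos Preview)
Your overall strategy --- replace \cref{lem:MSE_UB} by \cref{lem:misspec_MSE_UB}, retain \cref{lem:min_mse} as the lower bound, and apply \cref{lem:eps_i} once for each metric --- is exactly the paper's. The difference lies in how the two bounds are combined into a variance estimate. The paper does not introduce a separate deterministic bound \(B\) on \(W\): it writes \(\Var{f_n^{\MSE}(\thv)}=\Ex{f_n^{\MSE}(\thv)^2}-\Ex{f_n^{\MSE}(\thv)}^2\), bounds the first expectation above by \(\Ex{(a+U)^2}\) via \cref{lem:misspec_MSE_UB} and the second below by \(a^2\) via \cref{lem:min_mse}, and then simply expands \((a+U)^2-a^2=2aU+U^2\). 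The factor \(\sigma_\xi^2(1+m^{-1})\) in the stated constants thus drops out \emph{algebraically} from the cross term \(2aU\), with no appeal to a uniform bound on \(W\) or to Neumann-expansion control of the prediction weights \(\hat{\bm{w}}\).

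What you flag as ``the main difficulty'' is therefore a self-imposed detour. Your Neumann-based justification would in any case only deliver \(W=\bigO{\eps}\), not \(W\le Ca\) uniformly in \(n\), and the coarser route you mention recovers the correct rates but with constants proportional to \(\sigma_f^2\) rather than \(\sigma_\xi^2(1+m^{-1})\). Had you instead used \(0\le W\le U\) directly to obtain \(W^2\le U^2\), you would have bypassed \(B\) entirely and arrived at a bound at least as tight as the paper's.
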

\begin{proof}
    The proof construction mirrors that of \cref{lem:mse_conv_var} using \cref{thm:mse_conv_mis} rather than \cref{thm:mse_conv} (details are given in \cref{proof:mse_conv_var_mis}).
\end{proof}

As in the well-specified case, we derive a probabilistic guarantee, but we leave the details to \cref{lem:prob_mse_bound_mis} due to the added complexity of both the derivation and result.


\subsubsection{Calibration}
\label{sec:calibration}
When well-specified, there is no sense in considering the convergence rate of the weak-calibration measure \(f_n^{\CAL}\) (defined in \cref{sec:notation}) since it is guaranteed to be fixed at 1 in this case. However, in the misspecified case, we can attempt to determine the rate at which the metric is expected to be in some interval around the ratio \(\frac{\sigma_\xi^2}{\hat{\sigma}_\xi^2}\). To estimate this quantity in addition to both upper and lower bounds on \(f_n^{\MSE}\) we require bounds on \(\predvar[\hat]_N\).
\begin{lemma}[Calibration convergence]
\label{lem:cal_conv}
    Under the same conditions as \cref{lem:misspec_MSE_UB} with \(\hat{\eps}^\bullet_{\cdot}=\hat{\eps}^\bullet_{\cdot}(n)\), \(m=m(n)\),
    \eq{
        f_n^{\CAL}(\thv) &\geq \frac{\sigma_\xi^2}{\hat{\sigma}_\xi^2}\left[1 + \frac{2\avg{\hat{\eps}_{\bullet}}}{\hat{\sigma}_\xi^2(1+m^{-1})}\right]^{-1}, \\ 
        f_n^{\CAL}(\thv) &\lesssim \left(\frac{\sigma_\xi^2}{\hat{\sigma}_\xi^2} + \frac{2}{\hat{\sigma}_\xi^2}\left(\frac{\sigma_f^2}{\hat{\sigma}_f^2}(\avg{\hat{\eps}_{\bullet}}-\hat{\eps}^{\bullet}_{\min})+\avg{\eps_{\bullet}}\right)\right) \left[1 + \frac{2\hat{\eps}^{\bullet}_{\min} + \avg{\hat{\eps}_{\bullet\bullet}}}{\hat{\sigma}_\xi^2(1+m^{-1})}\right]^{-1}.
    }
\end{lemma}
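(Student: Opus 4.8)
The plan is to derive both inequalities directly from the definition $f_n^{\CAL}(\thv) = f_n^{\MSE}(\thv)/\predvar[\hat]_N$ by combining the MSE bounds already established with two-sided bounds on the misspecified predictive variance $\predvar[\hat]_N = \hat{\sigma}_f^2 - \hat{\kappa}_N + \hat{\sigma}_\xi^2$, where $\hat{\kappa}_N = \hat{\kstar}_N^T \hat{K}_N^{-1}\hat{\kstar}_N$. The lower bound on $f_n^{\CAL}$ should come from pairing a lower bound on $f_n^{\MSE}$ with an upper bound on $\predvar[\hat]_N$, and the upper bound on $f_n^{\CAL}$ from pairing the upper bound on $f_n^{\MSE}$ (namely \cref{lem:misspec_MSE_UB}) with a lower bound on $\predvar[\hat]_N$.

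First I would obtain the lower bound on $f_n^{\MSE}$. The minimal-MSE argument of \cref{lem:min_mse} gives $f_n^{\MSE}(\thv) \geq \sigma_\xi^2(1+m^{-1})$ for any $m$-point unbiased linear predictor; the GPnn predictor is asymptotically of this form up to the usual $\bigO{\eps, m^{-1}}$ corrections, so this serves as the numerator lower bound. Next I would produce the two bounds on $\hat{\kappa}_N$, hence on $\predvar[\hat]_N$, using exactly the Neumann-expansion machinery invoked in \cref{lem:misspec_MSE_UB} — i.e. \cref{lem:Kinv} and \cref{lem:1TKinv1}. Writing $\hat{K}_N = \hat{\sigma}_f^2(\1\1^T + E) + \hat{\sigma}_\xi^2 I$ with the perturbation $E$ controlled by the $\hat{\eps}_{ij}$ terms, and $\hat{\kstar}_N = \hat{\sigma}_f^2(\1 - \hat{\bm\eps}_\bullet/\hat{\sigma}_f^2) + \dots$, one expands $\hat{\kappa}_N$ to second order. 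The upper bound on $\predvar[\hat]_N$ (needed for the $f_n^{\CAL}$ lower bound) corresponds to a lower bound on $\hat{\kappa}_N$, which is governed by $\avg{\hat{\eps}_\bullet}$; the lower bound on $\predvar[\hat]_N$ corresponds to an upper bound on $\hat{\kappa}_N$, governed by $\hat{\eps}^\bullet_{\min}$ and $\avg{\hat{\eps}_{\bullet\bullet}}$ — these are precisely the quantities appearing in the two bracketed denominators of the claim, so I would track constants carefully to land on exactly $\hat{\sigma}_\xi^2(1+m^{-1})[1 + \cdots]$.

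With those pieces assembled, each inequality is a short division: for the lower bound, $f_n^{\CAL} = f_n^{\MSE}/\predvar[\hat]_N \geq \sigma_\xi^2(1+m^{-1}) / \big(\hat{\sigma}_\xi^2(1+m^{-1}) + 2\avg{\hat{\eps}_\bullet}\big)$, and factoring $\hat{\sigma}_\xi^2(1+m^{-1})$ out of the denominator gives the stated form; for the upper bound, substitute the \cref{lem:misspec_MSE_UB} numerator and the $\predvar[\hat]_N \gtrsim \hat{\sigma}_\xi^2(1+m^{-1}) + 2\hat{\eps}^\bullet_{\min} + \avg{\hat{\eps}_{\bullet\bullet}}$ denominator, again factoring out $\hat{\sigma}_\xi^2(1+m^{-1})$. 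The $\lesssim$ in the second line absorbs the $\bigO{\eps^2 + m^{-2}}$ remainder, consistent with the paper's convention.

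The main obstacle I anticipate is getting the predictive-variance bounds with the \emph{exact} constants quoted — in particular showing the $\hat{\kappa}_N$ upper bound is controlled by $2\hat{\eps}^\bullet_{\min} + \avg{\hat{\eps}_{\bullet\bullet}}$ rather than some looser combination, since this requires careful bookkeeping of which $\hat{\eps}_{ij}$ enter the off-diagonal Neumann terms and which cancel against the $\hat{\kstar}_N$ expansion. This is where \ref{ass:eps_min} presumably does its work, ensuring the relevant matrix is invertible and the Neumann series converges; I would need to verify the series' leading correction has the claimed sign so that the inequality (not just an approximate equality) holds. The rest — the division steps and absorbing remainders — is routine given the earlier lemmas.
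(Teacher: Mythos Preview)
Your proposal is correct and follows essentially the same route as the paper: lower-bound the numerator by \cref{lem:min_mse} and upper-bound \(\predvar[\hat]_N\) via the hatted version of \cref{lem:kappa}; for the other direction, upper-bound the numerator by \cref{lem:misspec_MSE_UB} and lower-bound \(\predvar[\hat]_N\) by bounding \(\hat{\kappa}_N \leq (\hat{\sigma}_f^2 - \hat{\eps}^{\bullet}_{\min})^2\,\1^T\hat{K}^{-1}\1\) and then applying \cref{lem:1TKinv1}. The obstacle you flag --- hitting exactly \(2\hat{\eps}^{\bullet}_{\min} + \avg{\hat{\eps}_{\bullet\bullet}}\) --- is resolved precisely by this componentwise replacement of \(\kstar[\hat]_N\) with \((\hat{\sigma}_f^2 - \hat{\eps}^{\bullet}_{\min})\1\), after which the constants fall out directly.
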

\begin{proof}
    Note that \(f_n^{\CAL}(\thv)=\frac{f_n^{\MSE}(\thv)}{\predvar[\hat]_N}\). To derive a lower bound observe that the lower bound for the numerator is \(\sigma_\xi^2(1+m^{-1})\) (\cref{lem:min_mse}). We then require an upper bound for the denominator. We use \cref{lem:misspec_MSE_UB} and derive a lower bound for the predictive variance to obtain the upper bound.
    \Cref{proof:cal_conv} contains the details.
\end{proof}

We can convert the above result to one depending on \(n\) as we did in the MSE case:
\begin{theorem}[Calibration convergence]
\label{thm:cal_conv}
    Under the same conditions as \cref{thm:mse_conv_mis},
    \eq{
        \Ex{f_n^{\CAL}(\thv)} &\geq \frac{\sigma_\xi^2}{\hat{\sigma}_\xi^2}\left[1 + \frac{2C_{\thv,d}}{\hat{\sigma}_\xi^2(1+m^{-1})}\left(\frac{n}{m}\right)^{\frac{-\hat{p}}{d}}\right]^{-1} \\
        \Ex{f_n^{\CAL}(\thv)} & \lesssim \frac{\sigma_\xi^2}{\hat{\sigma}_\xi^2} + \frac{2}{\hat{\sigma}_\xi^2}\bigg(\frac{\sigma_f^2}{\hat{\sigma}_f^2} C_{\thv,d}\left(\frac{n}{m}\right)^{-\frac{\hat{p}}{d}}  + C_{\bm{\theta},d}\left(\frac{n}{m}\right)^{-\frac{p}{d}}\bigg).
        }
\end{theorem}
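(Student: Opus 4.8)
The plan is to convert the distance-dependent bounds of \cref{lem:cal_conv} into explicit functions of \(n\) by taking expectations over the test and training inputs and invoking \cref{lem:eps_i}, exactly in the spirit of how \cref{thm:mse_conv} followed from \cref{lem:MSE_UB} and \cref{lem:eps_i}. The one new ingredient, relative to the MSE case, is that the calibration bounds are reciprocal in the nearest-neighbour distances, so the lower bound needs a short convexity argument before \cref{lem:eps_i} can be applied.

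For the upper bound I would start from the second inequality of \cref{lem:cal_conv}. The bracketed factor \(\left[1 + \frac{2\hat{\eps}^{\bullet}_{\min} + \avg{\hat{\eps}_{\bullet\bullet}}}{\hat{\sigma}_\xi^2(1+m^{-1})}\right]^{-1}\) is at most \(1\), since \(\hat{\eps}^{\bullet}_{\min},\avg{\hat{\eps}_{\bullet\bullet}}\geq 0\), so it can be dropped; likewise \(\avg{\hat{\eps}_{\bullet}}-\hat{\eps}^{\bullet}_{\min}\leq\avg{\hat{\eps}_{\bullet}}\). Taking expectations, using linearity, and then applying \cref{lem:eps_i} to \(\Ex{\avg{\hat{\eps}_\bullet}}\) with the model kernel (producing the exponent \(\hat p\) and constant \(C_{\thv,d}\)) and to \(\Ex{\avg{\eps_\bullet}}\) with the generative kernel (producing \(p\) and \(C_{\bm\theta,d}\)) gives precisely the stated upper bound, with the \(\lesssim\) absorbing the \(\bigO{\eps^2+m^{-2}}\) remainder already carried in \cref{lem:cal_conv}.

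For the lower bound I would start from the first inequality of \cref{lem:cal_conv} and take expectations. Writing \(a := \frac{2}{\hat{\sigma}_\xi^2(1+m^{-1})}>0\), the map \(x\mapsto (1+ax)^{-1}\) is convex and decreasing on \([0,\infty)\), and \(\avg{\hat{\eps}_\bullet}\geq 0\), so Jensen's inequality gives \(\Ex{(1+a\avg{\hat{\eps}_\bullet})^{-1}}\geq (1+a\,\Ex{\avg{\hat{\eps}_\bullet}})^{-1}\). Since the function is decreasing, substituting the upper bound \(\Ex{\avg{\hat{\eps}_\bullet}}\leq C_{\thv,d}(n/m)^{-\hat p/d}\) from \cref{lem:eps_i} does not reverse the inequality, and multiplying through by the positive constant \(\sigma_\xi^2/\hat{\sigma}_\xi^2\) yields the claimed lower bound.

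The main obstacle is simply keeping the inequality directions straight in the lower bound: one must verify that \(\avg{\hat{\eps}_\bullet}\) lies in the region where \((1+ax)^{-1}\) is convex and that inserting an \emph{upper} bound on \(\Ex{\avg{\hat{\eps}_\bullet}}\) into a \emph{decreasing} function still produces a \emph{lower} bound. There is also a mild technical point — that all the expectations involved are finite, so Jensen applies — which is guaranteed by \cref{lem:eps_i} under \ref{ass:X}; and the allowance \(m=m(n)\) changes nothing, since \(a\) remains bounded away from \(0\) and \(\infty\) throughout.
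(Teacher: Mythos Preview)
Your proposal is correct and matches the paper's own proof essentially line for line: the paper bounds the bracketed factor in the upper bound from above by \(1\) and applies \cref{lem:eps_i}, and for the lower bound invokes convexity of \((1+x)^{-1}\) and Jensen before inserting the \cref{lem:eps_i} bound. Your only addition is making explicit the monotonicity step (that inserting an \emph{upper} bound on \(\Ex{\avg{\hat\eps_\bullet}}\) into the decreasing function \((1+ax)^{-1}\) preserves the lower bound), which the paper leaves implicit but which is indeed needed.
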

\begin{proof}
    First we prove the lower bound by noting that, for \(x>0\), \((1+x)^{-1}\) is convex, so we can simply insert the expectation over \(\hat{\eps}\) (using \cref{lem:eps_i}) and apply Jensen's inequality to obtain the result.

    For the upper bound, we first see that the term in square brackets is bounded from below by 1 and then directly apply \cref{lem:eps_i} again.
\end{proof}

\begin{remark}[NLL convergence]
    Due to the close relationship between weak-calibration and NLL, we could find an upper bound for \(\Ex{f_n^{\NLL}}\) by utilising the upper bound for \(\predvar[\hat]_N\) given in the proof above and exploiting concavity of \(\log\) to apply Jensen's inequality.
\end{remark}

\section{Experiments}
\label{sec:exp}

We run a series of simulations as per Algorithm 1 of \cite{GPnn} to investigate the empirical convergence behaviour under various conditions, and levels of misspecification (\ref{sec:mis_rate}), to assess the validity of the theoretical results given as well as provide contextual behaviour in cases where the theory provides less interpretable results. 

Data were generated by first sampling inputs according to \(\xrv\sim\mc{N}(0,d^{-1}I_d)\) to normalise the expected length of the vectors (i.e. so that \(\Eold\norm{x}^2=1\)). The benefit of using such a normalisation is that it is equivalent to scaling the lengthscale by a factor of \(\sqrt{d}\) and allows a reasonable comparison between lengthscales of datasets of different dimension, to ascertain the general behaviour of the models and clarify the impact of dimensionality beyond the (necessary) lengthscale rescaling.

Sample \(y\) values were generated using a predefined set of kernel parameters and GPnn predictions made subsequently on a hold-out (synthetic) dataset. Performance metrics (as defined previously) were measured on this hold-out set and the process repeated for multiple values of \(n\), input dimension \(d\) and values of \(\hat{l}\) over an interval around \(l\). We estimated the intercept and slope of the \(\log\)-\(\log\) plot of \(\MSE-\sigma_\xi^2(1+m^{-1})\) against \(n\), measured via standard python functions. Further details are provided in \ref{appx:experiments}.


Additionally, for sufficiently small lengthscales, the sizes of \(n\) used in the simulations are not large enough to ensure that in the misspecified case the Neumann expansion used in the proof of the upper bound converges (\cref{lem:Kinv}). As such, we cannot claim that we expect the upper bound to hold in these regions. It is perhaps not surprising that in these cases performance is poor and convergence is not observed, since all of the points are effectively far apart and heuristically we would expect much larger values of \(n\) to be required to reach near-limiting behaviour.

\subsection{Misspecification (a)(ii)}
\def\FigureScale{0.45}

\begin{figure}
\centering
    \subcaptionbox{\label{fig:rates_matched_rbf_rbf}}{\includegraphics[scale=\FigureScale]{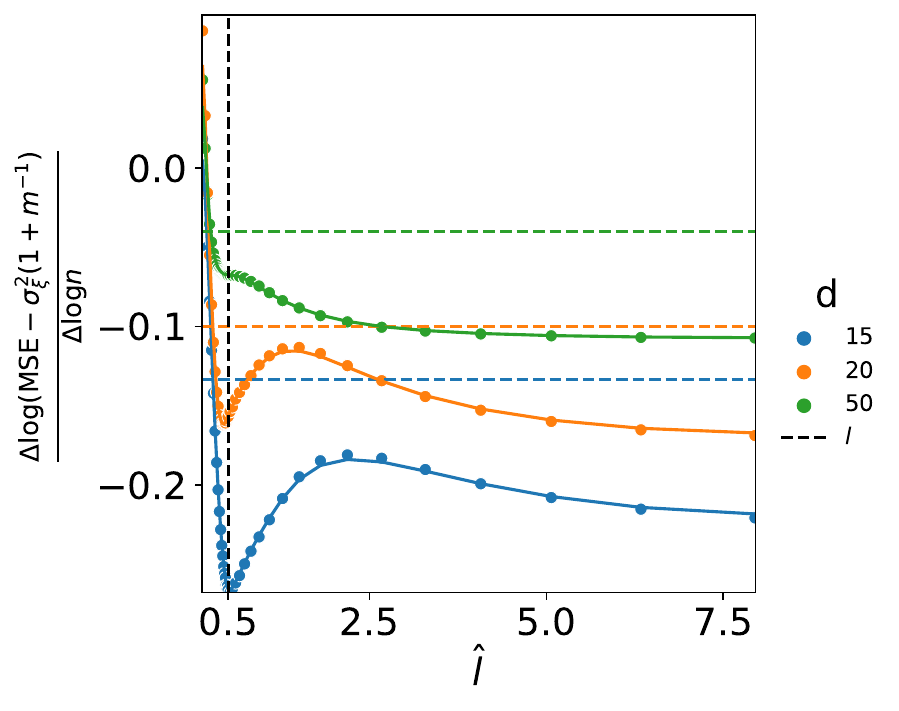}}
    \hfill
    \subcaptionbox{\label{fig:rates_matched_exp_exp}}{\includegraphics[scale=\FigureScale]{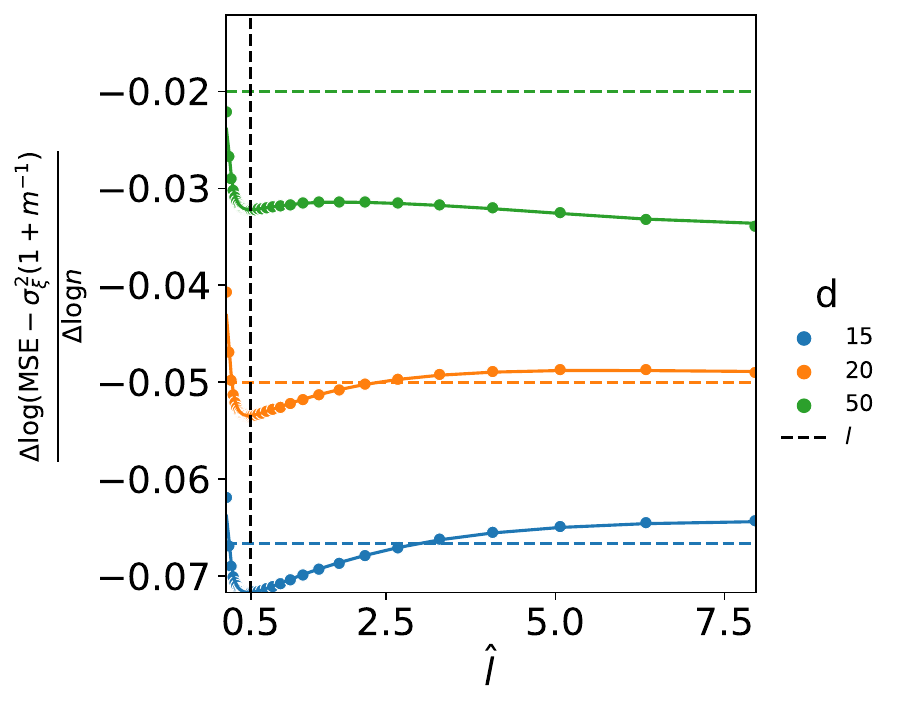}}
\caption{Empirical convergence rate as a function of lengthscale with matched external parameters: \subref{fig:rates_matched_rbf_rbf}  (\(c_{RBF}\)-\(\hat{c}_{RBF}\)) and \subref{fig:rates_matched_exp_exp} (\(c_{Exp}\)-\(\hat{c}_{Exp}\)). Dashed coloured horizontal lines represent the upper bound rate (given by \(-2/d\)) where each colour corresponds to the relevant value of \(d\).}
  \label{fig:rates_matched}
\end{figure}



\Cref{fig:rates_matched_rbf_rbf} and \Cref{fig:rates_matched_exp_exp} show the observed empirical gradient of the MSE with respect to \(n\) on results from simulations on synthetic data where only the kernel internal parameters are misspecified for the RBF and Exp kernels respectively. Dashed coloured horizontal lines represent the upper bound rate (given by \(-p/d\)) where each colour corresponds to the relevant value of \(d\).
For figures \ref{fig:rates_matched_rbf_rbf} and \ref{fig:rates_matched_exp_exp} we observe minima close to the true lengthscale \(l\) followed by an increase as \(\hat{l}\) increases, to a maximum in the case of \cref{fig:rates_matched_rbf_rbf}. 

\subsection{Misspecification (b)(i-ii)}
\begin{figure}
\centering
    \subcaptionbox{\label{fig:rates_matched_rbf_exp}}{\includegraphics[scale=\FigureScale]{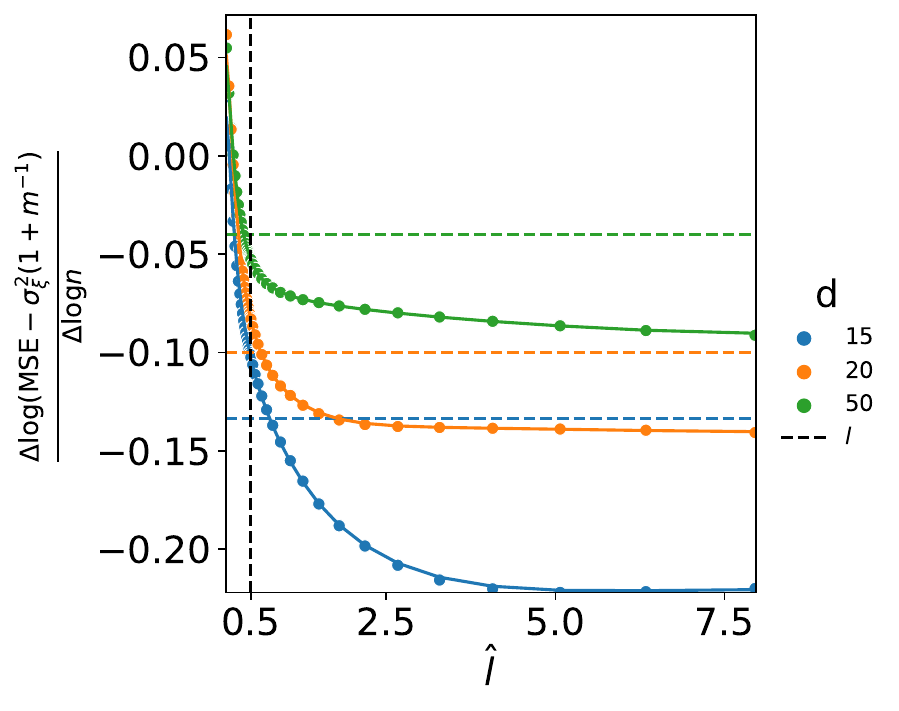}}
    \hfill
    \subcaptionbox{\label{fig:rates_matched_exp_rbf}}{\includegraphics[scale=\FigureScale]{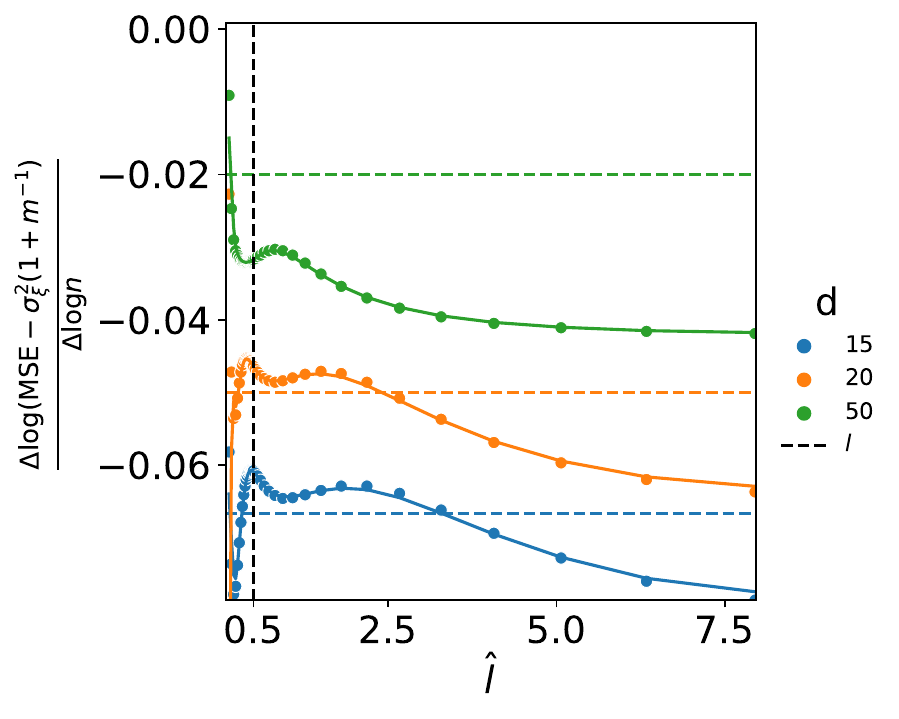}}
\caption{Empirical convergence rate as a function of lengthscale with matched external parameters: \subref{fig:rates_matched_rbf_rbf}  (\(c_{RBF}\)-\(\hat{c}_{Exp}\)) and \subref{fig:rates_matched_exp_exp} (\(c_{Exp}\)-\(\hat{c}_{RBF}\)). Dashed coloured horizontal lines represent the upper bound rate (given by \(-2/d\) and \(-1/d\) respectively) where each colour corresponds to the relevant value of \(d\).}
  \label{fig:rates_mismatched}
\end{figure}


In addition to the previous section we run simulations using mismatched kernels. Although the external parameters were matched in this case, this is no longer as meaningful. We see that the rates (at least for \(\hat{l}>l\)) in \Cref{fig:rates_matched_rbf_exp} are generally significantly lower than those in \Cref{fig:rates_matched_exp_rbf}.

Interestingly, in both cases we no longer observe well-defined minima, but \cref{fig:rates_matched_rbf_exp} shows a gradual decrease in rate as \(\hat{l}\) increases which perhaps reflects the relative smoothness of the RBF vs Exp kernel; i.e. a random function generated from an RBF kernel with lengthscale \(l\) will be very smooth whilst a much less-smooth Exp kernel may need a much larger lengthscale to model it effectively. Conversely, \cref{fig:rates_matched_exp_rbf} shows a steep downward trend for \(\hat{l}<l\) which matches our expectations that, in this case, a \emph{smaller} lengthscale for the smoother RBF model kernel better models the rougher Exp-generated data. In fact, using longer lengthscales tends to be a reliable way to obtain moderately effective performance in both (b)(i) and (b)(ii).

We can go some way to explain the relatively good performance of the model with long lengthscales with the following lemma:
\begin{lemma}[MSE as \(\hat{l}\rightarrow\infty\)]
\label{lem:mse_limitl}
    With only \hyperref[ass:convergence]{assumptions (A1-3)}, as \(\hat{l}\rightarrow\infty\) we have that
    \eq{
        \Ex{f_n^{\MSE}(\thv)} &\leq \sigma_\xi^2(1+m^{-1}) + 2C_{\bm{\theta},d}\left(\frac{n}{m}\right)^{-\frac{p}{d}} \pm \bigO{m^{-2}}.
    }
\end{lemma}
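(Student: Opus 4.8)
The plan is to obtain the bound by passing to the \(\hat l\to\infty\) limit in the already-established estimate of \cref{lem:misspec_MSE_UB}, exploiting the fact that in this limit the model-side induced distances all collapse, so that \ref{ass:faithful} (and the technical condition \ref{ass:eps_min}) are no longer needed. First I would observe that \(\hat c(\cdot)\) is a normalised isotropic kernel, so continuity at the origin gives \(\hat c\big(\norm{\xv-\xv'}/\hat l\big)\to\hat c(0)=1\) as \(\hat l\to\infty\), uniformly over any bounded range of Euclidean separations. Hence \(\hat\rho^2(\xv,\xv';\hat l)=\hat\sigma_f^2\big(1-\hat c(\norm{\xv-\xv'}/\hat l)\big)\to 0\) for every fixed pair of inputs, so that each of \(\avg{\hat\eps_\bullet}\), \(\hat\eps^{\bullet}_{\min}\), \(\avg{\hat\eps_{\bullet\bullet}}\) and \(\hat\eps^{\bullet\bullet}_{\min}\) tends to zero for any fixed realised nearest-neighbour configuration --- and that configuration is itself stable in \(\hat l\) whenever the model kernel is monotone in Euclidean distance, as holds for the RBF, exponential and Mat\'ern kernels of interest.

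Next I would revisit the Neumann-expansion argument underpinning \cref{lem:misspec_MSE_UB}: the perturbation that is expanded there is built from the \(\hat\eps_{ij}\) terms, which now vanish, so the expansion converges trivially and the remainder contributions that \ref{ass:faithful}/\ref{ass:eps_min} were introduced to control become \(o(1)\). Passing to the limit, the term \(2\frac{\sigma_f^2}{\hat\sigma_f^2}\big(\avg{\hat\eps_\bullet}-\hat\eps^{\bullet}_{\min}\big)\) and the \(\hat\eps\)-dependent part of the \(\bigO{\eps^2}\) remainder disappear, leaving \(f_n^{\MSE}(\thv)\le\sigma_\xi^2(1+m^{-1})+2\avg{\eps_\bullet}\pm\bigO{m^{-2}+\eps^2}\), in which \(\eps\) now refers only to the \(\hat l\)-independent generative distances. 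Taking the expectation over the input distribution, neglecting \(\Ex{\bigO{\eps^2}}=o(\Ex{\eps})\) exactly as in the proof of \cref{thm:mse_conv}, and applying \cref{lem:eps_i} to bound \(\Ex{\avg{\eps_\bullet}}\le C_{\bm\theta,d}(n/m)^{-p/d}\) then yields the claim.

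The hard part will be making the limit rigorous: one must justify interchanging \(\lim_{\hat l\to\infty}\) with the expectations over the test point and the training inputs, which calls for a uniform-in-configuration control of the \(\hat\eps\) quantities (available from the uniform convergence \(\hat c(\cdot/\hat l)\to1\) on compacts together with the moment condition in \ref{ass:X}) plus a dominated-convergence argument, and one must check that the estimate of \cref{lem:misspec_MSE_UB} degrades gracefully as its hypotheses are relaxed in the limit rather than being invoked only at finite \(\hat l\). A fully self-contained alternative would avoid \cref{lem:misspec_MSE_UB} altogether: as \(\hat l\to\infty\) we have \(\hat K_N\to\hat\sigma_f^2\1\1^T+\hat\sigma_\xi^2 I\) and the cross-covariance vector tends to \(\hat\sigma_f^2\1\), so Sherman--Morrison collapses the predictor to the shrunken sample mean \(\mu_N^*=\big(1+\hat\sigma_\xi^2/(m\hat\sigma_f^2)\big)^{-1}\frac1m\1^T\bm{y}_N\); expanding \(f_n^{\MSE}=\Eyy{(y^*-\mu_N^*)^2}\) directly, using \(\E[(f(\xrv_{(i)})-f(\xv^*))^2]=2\eps_i\) for the WSRF together with the optimal-weight computation of \cref{lem:min_mse}, recovers \(\sigma_\xi^2(1+m^{-1})+2\avg{\eps_\bullet}\pm\bigO{m^{-2}}\), and one finishes with \cref{lem:eps_i} exactly as above.
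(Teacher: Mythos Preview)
Your proposal is correct, and in fact your ``fully self-contained alternative'' at the end is precisely the route the paper takes. The paper does \emph{not} pass to the limit inside \cref{lem:misspec_MSE_UB}; instead it substitutes \(\hat K\to\hat K^\infty=\hat\sigma_\xi^2 I+\hat\sigma_f^2\1\1^T\) and \(\hat{\bm k}\to\hat\sigma_f^2\1\) directly into the \((a)+(b)-2(c)+(d)\) decomposition, uses the eigenvector identity \((\hat K^\infty)^{-1}\1=\hat\sigma_\xi^{-2}(1-m\hat\gamma)\1\) (your Sherman--Morrison step), and after elementary algebra obtains the exact expression
\[
f_n^{\MSE}(\thv)=\sigma_\xi^2(1+m^{-1})+2\avg{\eps_\bullet}-\avg{\eps_{\bullet\bullet}}\pm\bigO{m^{-2}},
\]
then drops the nonnegative \(\avg{\eps_{\bullet\bullet}}\) and applies \cref{lem:eps_i}. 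So all of the limit-interchange and dominated-convergence concerns you flag for your primary route are simply sidestepped: by working with the closed-form limiting predictor, no Neumann expansion is needed at all, and assumptions \ref{ass:faithful}--\ref{ass:eps_min} never enter. Your primary approach would also work but is strictly more laborious; the alternative you sketched is both the cleaner argument and the one the paper uses.
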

\begin{proof}
    As \(\hat{l}\rightarrow\infty\), \(\hat{K}\rightarrow \hat{K}^{\infty}\) and we can apply existing results to compute all of the quantities. After some algebra we obtain \(f_n^{\MSE}(\thv) = \sigma_\xi^2(1+m^{-1}) + 2\avg{\eps_\bullet} - \avg{\eps_{\bullet\bullet}} \pm \bigO{m^{-2}}\) to which we simply apply \cref{lem:eps_i}. A full proof is given in \ref{proof:mse_limitl}.
\end{proof}

This effectively describes the behaviour of conventional \(k\)-NN prediction (notice that the RHS has no dependence on assumed (hatted) parameters), which we might view as a natural benchmark for GPnn. The fact that this rate appears to be \emph{better} than that derived in the fully misspecified case (since the RHS of \cref{lem:mse_limitl} is less than or equal to the RHS of \cref{thm:mse_conv_mis}) may demonstrate that these bounds are not tight and fail to describe the full dependence on lengthscale which might more clearly reveal the superior empirical performance observed for GPnn at non-extreme values of \(\hat{l}\) (see Figures \ref{fig:conv_matched_rbf_rbf}, \ref{fig:conv_matched_rbf_exp}, \ref{fig:conv_matched_exp_rbf}, \ref{fig:conv_matched_exp_exp} and \cref{tab:best_kernels} in \ref{appx:experiments} for evidence). The above relationship between GPnn and conventional \(k\)-NN prediction raises the question as to what range of lengthscales (and other parameters) might we expect GPnn to outperform \(k\)-NN; \ref{appx:sims} addresses this point in a series of figures that demonstrate a generally sizeable interval for \(\hat{l}\) for which this level of accuracy is achieved. It is notable that \(k\)-NN performs more competitively for data drawn from a GP with an exponential kernel, as we expect. 

We also note that as \(d\) increases the curves become flatter, which may be a result of the decreasing size of fluctuations of the off-diagonal terms of the Gram matrix in this case, as explored in \ref{appx:highd}.

\section{Discussion}
\label{sec:discussion}
The convergence rates derived in the preceding sections assume a fixed value for \(m\), the number of nearest neighbours. This is to allow a straightforward interpretation of the results of the model as introduced in \cite{GPnn} (where \(m\) was set to a somewhat arbitrary 400). As seen in the limiting results this leaves a (small) bias that is removed if \(m=m(n)\xrightarrow{n\rightarrow\infty}\infty\). When such an \(m=\mc{O}\big(n^\frac{p}{p+d}\big)\) is chosen, we obtain rates more directly comparable with those seen in the literature: \(\mc{O}\big(n^{-\frac{p}{p+d}}\big)\). This rate can be shown to be minimax optimal under certain circumstances and under conditions more restrictive than those we consider here. In the case of \(\nu\leq1\), if we choose \(m=n^{\frac{2\nu}{2\nu+d}}\) then we obtain the optimal rate \(\mc{O}\big(n^{-\frac{2\nu}{2\nu+d}}\big)\) for \(\nu\)-regular functions over \([0,1]^d\). See \cite{Tsybakov2008} for a discussion of optimal rates of convergence under standard nonparametric assumptions and e.g. \cite{van_der_vaart_information_2011} for their usage in the context of GPs. As in the example just given, the literature tends to assume the covariates are restricted to a bounded input space commonly taken to be \([0,1]^d\). In contrast, our results hold with a moment condition on the covariate distribution over a (possibly unbounded) space. We are unaware of results in the literature pertaining to the optimal rates under these precise conditions. 

In \cref{sec:exp} we make headway in explaining how lengthscale and dimension impact the convergence rate and how this relates to the performance of straight \(k\)-NN performance. Despite a face-value equivalence in asymptotic rates we expect GPnn to outperform \(k\)-NN in all but the most extreme cases. The resilience of GPnn in the face of the curse-of-dimensionality is also striking when contrasted to \(k\)-NN, as demonstrated on the CTslice dataset in \cref{tab:best_kernels}.

\section{Further Work}
\label{sec:further}
Although we have made progress in understanding the behaviour of GPnn convergence as a function of generative and predictive model parameters, the results are given as upper bounds that we do not believe to be, in general, tight. As such, a practitioner wishing to evaluate sufficient lower bounds on \(n\) for which they can give guarantees on prediction MSE will be restricted to unnecessarily large values. This might be improved by a more thorough understanding of the size of the constant factor.

Additionally, the authors are actively engaged in extending this analysis in a comparable way to alternative GP approximation techniques within the framework presented here and in \cite{GPnn} where training and prediction have been separated.

\printbibliography 

@ARTICLE{Datta2016,
  AUTHOR = {Datta, Abhirup and Banerjee, Sudipto and Finley, Andrew O. and Gelfand, Alan E.},
  YEAR = {2016},
  DOI = {10.1080/01621459.2015.1044091},
  EPRINT = {1406.7343},
  EPRINTTYPE = {arXiv},
  ISSN = {1537274X},
  JOURNAL = {Journal of the American Statistical Association},
  KEYWORDS = {Bayesian modeling,Gaussian process,Hierarchical models,Markov chain Monte Carlo,Nearest neighbors,Predictive process,Reduced-rank models,Sparse precision matrices,Spatial cross-covariance functions},
  NUMBER = {514},
  PAGES = {800--812},
  TITLE = {{Hierarchical Nearest-Neighbor Gaussian Process Models for Large Geostatistical Datasets}},
  VOLUME = {111},
}

@BOOK{Gyorfi2010,
  AUTHOR = {Györfi, László and Kohler, Michael and Krzyżak, Adam and Walk, Harro},
  PUBLISHER = {Springer Series in Statistics},
  YEAR = {2010},
  ISBN = {9781441929983},
  TITLE = {{A Distribution-Free Theory of Nonparametric Regression}},
}

@ARTICLE{GP_approx_review,
  AUTHOR = {Liu, Haitao and Ong, Yew-Soon and Shen, Xiaobo and Cai, Jianfei},
  PUBLISHER = {IEEE},
  YEAR = {2020},
  JOURNAL = {IEEE transactions on neural networks and learning systems},
  NUMBER = {11},
  PAGES = {4405--4423},
  TITLE = {When Gaussian process meets big data: A review of scalable GPs},
  VOLUME = {31},
}

@INPROCEEDINGS{Scholkopf2001,
  AUTHOR = {Schölkopf, Bernhard},
  EDITOR = {Leen, T and Dietterich, T and Tresp, V},
  PUBLISHER = {MIT Press},
  BOOKTITLE = {Advances in Neural Information Processing Systems},
  YEAR = {2000},
  TITLE = {{The Kernel Trick for Distances}},
  VOLUME = {13},
}

@ARTICLE{Stein2004,
  AUTHOR = {Stein, Michael L. and Chi, Zhiyi and Welty, Leah J.},
  YEAR = {2004},
  DOI = {10.1046/j.1369-7412.2003.05512.x},
  ISSN = {13697412},
  JOURNAL = {Journal of the Royal Statistical Society. Series B: Statistical Methodology},
  KEYWORDS = {Chlorophyll fluorescence,Estimating equations,Restricted maximum likelihood,Variogram estimation},
  NUMBER = {2},
  PAGES = {275--296},
  TITLE = {{Approximating likelihoods for large spatial data sets}},
  VOLUME = {66},
}

@InProceedings{Tran2020,
  title = 	 {Sparse within Sparse Gaussian Processes using Neighbor Information},
  author =       {Tran, Gia-Lac and Milios, Dimitrios and Michiardi, Pietro and Filippone, Maurizio},
  booktitle = 	 {Proceedings of the 38th International Conference on Machine Learning},
  pages = 	 {10369--10378},
  year = 	 {2021},
  editor = 	 {Meila, Marina and Zhang, Tong},
  volume = 	 {139},
  series = 	 {Proceedings of Machine Learning Research},
  month = 	 {7},
  publisher =    {PMLR},
  pdf = 	 {http://proceedings.mlr.press/v139/tran21a/tran21a.pdf},
  url = 	 {https://proceedings.mlr.press/v139/tran21a.html},
}

@BOOK{GP_book,
  AUTHOR = {Williams, Christopher K and Rasmussen, Carl Edward},
  PUBLISHER = {MIT press Cambridge, MA},
  YEAR = {2006},
  TITLE = {Gaussian processes for machine learning},
  VOLUME = {2},
}

@ARTICLE{Wu2022,
  AUTHOR = {Wu, Luhuan and Pleiss, Geoff and Cunningham, John},
  URL = {http://arxiv.org/abs/2202.01694},
  YEAR = {2022},
  EPRINT = {2202.01694},
  EPRINTTYPE = {arXiv},
  JOURNAL = {Proceedings of the 39th International Conference on Machine Learning},
  TITLE = {{Variational Nearest Neighbor Gaussian Process}},
}

@article{vecchia_estimation_1988,
	title = {Estimation and {Model} {Identification} for {Continuous} {Spatial} {Processes}},
	volume = {50},
	issn = {0035-9246},
	url = {https://www.jstor.org/stable/2345768},
	number = {2},
	urldate = {2023-09-22},
	journal = {Journal of the Royal Statistical Society. Series B (Methodological)},
	author = {Vecchia, A. V.},
	year = {1988},
	note = {Publisher: [Royal Statistical Society, Wiley]},
	pages = {297--312},
}

@article{Stein2002,
	title = {The screening effect in kriging},
	volume = {30},
	issn = {00905364},
	doi = {10.1214/aos/1015362194},
	number = {1},
	journal = {Annals of Statistics},
	author = {Stein, Michael L.},
	year = {2002},
	keywords = {Asymptotics, Best linear prediction, Random field, Regular variation, Self-affine, Self-similar},
	pages = {298--323},
}

@BOOK{Horn1985,
  AUTHOR = {Horn, Roger A. and Johnson, Charles R.},
  BOOKTITLE = {Matrix Analysis},
EDITION = {Second},
  DATE = {1985},
    YEAR = {1985},
  DOI = {10.1017/cbo9780511810817},
    ISBN = {9780521548236},
  TITLE = {{Matrix Analysis}},
PUBLISHER = {Cambridge University Press}
}

@article{Kohler2006,
author = {Kohler, Michael and Krzyzak, Adam and Walk, Harro},
doi = {10.1016/j.jmva.2005.03.006},
issn = {0047259X},
journal = {Journal of Multivariate Analysis},
keywords = {Nearest neighbor estimate,Partitioning estimate,Rate of convergence,Regression},
number = {2},
pages = {311--323},
title = {{Rates of convergence for partitioning and nearest neighbor regression estimates with unbounded data}},
volume = {97},
year = {2006}
}

@article{Tronarp2018a,
author = {Tronarp, Filip and Karvonen, Toni and Sarkka, Simo},
doi = {10.1109/MLSP.2018.8516992},
isbn = {9781538654774},
issn = {21610371},
journal = {IEEE International Workshop on Machine Learning for Signal Processing, MLSP},
keywords = {Bayesian quadrature,Gaussian process regression,Mat{\'{e}}rn covariance,Scale mixture representation,State space approximation},
number = {2},
title = {{Mixture representation of the mat{\'{e}}rn class with applications in state space approximations and Bayesian quadrature}},
volume = {2018-September},
year = {2018}
}

@article{Gramacy2015,
	title = {Local {Gaussian} {Process} {Approximation} for {Large} {Computer} {Experiments}},
	volume = {24},
	issn = {15372715},
	doi = {10.1080/10618600.2014.914442},
	number = {2},
	journal = {Journal of Computational and Graphical Statistics},
	author = {Gramacy, Robert B. and Apley, Daniel W.},
	year = {2015},
	note = {arXiv: 1303.0383},
	keywords = {Surrogate model, Active learning, Compactly supported, Covariance, Emulator, Local kriging neighborhoods, Sequential design, Sequential updating},
	pages = {561--578},
}

@misc{finley_efficient_2018,
	title = {Efficient algorithms for {Bayesian} {Nearest} {Neighbor} {Gaussian} {Processes}},
	url = {http://arxiv.org/abs/1702.00434},
	urldate = {2023-09-28},
	publisher = {arXiv},
	author = {Finley, Andrew O. and Datta, Abhirup and Cook, Bruce C. and Morton, Douglas C. and Andersen, Hans E. and Banerjee, Sudipto},
	month = mar,
	year = {2018},
	note = {arXiv:1702.00434 [stat]},
	keywords = {Statistics - Applications, Statistics - Computation},
}

@inproceedings{vakili_improved_2022,
	title = {Improved {Convergence} {Rates} for {Sparse} {Approximation} {Methods} in {Kernel}-{Based} {Learning}},
	url = {https://proceedings.mlr.press/v162/vakili22a.html},
	language = {en},
	urldate = {2023-06-27},
	booktitle = {Proceedings of the 39th {International} {Conference} on {Machine} {Learning}},
	publisher = {PMLR},
	author = {Vakili, Sattar and Scarlett, Jonathan and Shiu, Da-Shan and Bernacchia, Alberto},
	month = jun,
	year = {2022},
	note = {ISSN: 2640-3498},
	pages = {21960--21983},
}

@inproceedings{GPnn,
      author={Robert Allison and Anthony Stephenson and Samuel F and Edward Pyzer-Knapp},
 booktitle = {Advances in Neural Information Processing Systems},
 publisher = {Curran Associates, Inc.},
      title={Leveraging Locality and Robustness to Achieve Massively Scalable Gaussian Process Regression}, 
 volume = {36},
 year = {2023},
eprint={2306.14731},
      archivePrefix={arXiv},
      primaryClass={stat.ML}
}

@misc{NIST:DLMF,
         key = "{\relax DLMF}",
       title = "{\it NIST Digital Library of Mathematical Functions}",
howpublished = "\url{https://dlmf.nist.gov/}, Release 1.1.11 of 2023-09-15",
         url = "https://dlmf.nist.gov/",
        note = "F.~W.~J. Olver, A.~B. {Olde Daalhuis}, D.~W. Lozier, B.~I. Schneider,
                R.~F. Boisvert, C.~W. Clark, B.~R. Miller, B.~V. Saunders,
                H.~S. Cohl, and M.~A. McClain, eds."}

@article{Stein2010,
	title = {2010 {Rietz} lecture when does the screening effect hold?},
	volume = {39},
	issn = {00905364},
	doi = {10.1214/11-AOS909},
	number = {6},
	journal = {Annals of Statistics},
	author = {Stein, Michael L.},
	year = {2011},
	note = {arXiv: 1203.1801v1},
	keywords = {Kriging, Fixed-domain asymptotics, Space-time process, Spectral analysis},
	pages = {2795--2819},
}

@article{burt_convergence_2020,
	title = {Convergence of {Sparse} {Variational} {Inference} in {Gaussian} {Processes} {Regression}},
	volume = {21},
	language = {en},
	journal = {Journal of Machine Learning Research},
	author = {Burt, David R and Rasmussen, Carl Edward},
	year = {2020},
	pages = {1--63},
}

@article{van_der_vaart_information_2011,
	title = {Information {Rates} of {Nonparametric} {Gaussian} {Process} {Methods}},
	volume = {12},
	url = {https://jmlr.org/papers/volume12/vandervaart11a/vandervaart11a.pdf},
	urldate = {2023-07-24},
	journal = {Journal of Machine Learning Research},
	author = {van der Vaart, Aad},
	year = {2011},
	file = {Information Rates of Nonparametric Gaussian Process Methods:/Users/ll20823/Zotero/storage/THFNILP2/Information Rates of Nonparametric Gaussian Process Methods.pdf:application/pdf},
}

@book{Tsybakov2008,
author = {Tsybakov, Alexandre B.},
title = {Introduction to Nonparametric Estimation},
year = {2008},
isbn = {0387790519},
publisher = {Springer Publishing Company, Incorporated},
edition = {1st},
abstract = {This is a concise text developed from lecture notes and ready to be used for a course on the graduate level. The main idea is to introduce the fundamental concepts of the theory while maintaining the exposition suitable for a first approach in the field. Therefore, the results are not always given in the most general form but rather under assumptions that lead to shorter or more elegant proofs. The book has three chapters. Chapter 1 presents basic nonparametric regression and density estimators and analyzes their properties. Chapter 2 is devoted to a detailed treatment of minimax lower bounds. Chapter 3 develops more advanced topics: Pinskers theorem, oracle inequalities, Stein shrinkage, and sharp minimax adaptivity. This book will be useful for researchers and grad students interested in theoretical aspects of smoothing techniques. Many important and useful results on optimal and adaptive estimation are provided. As one of the leading mathematical statisticians working in nonparametrics, the author is an authority on the subject.}
}

@article{Nieman2022,
  author  = {Dennis Nieman and Botond Szabo and Harry van Zanten},
  title   = {Contraction rates for sparse variational approximations in Gaussian process regression},
  journal = {Journal of Machine Learning Research},
  year    = {2022},
  volume  = {23},
  number  = {205},
  pages   = {1--26},
  url     = {http://jmlr.org/papers/v23/21-1128.html}
}

@article{sobolev_spaces_2012,
title = {Hitchhiker's guide to the fractional Sobolev spaces},
journal = {Bulletin des Sciences Mathématiques},
volume = {136},
number = {5},
pages = {521-573},
year = {2012},
issn = {0007-4497},
doi = {https://doi.org/10.1016/j.bulsci.2011.12.004},
url = {https://www.sciencedirect.com/science/article/pii/S0007449711001254},
author = {Eleonora {Di Nezza} and Giampiero Palatucci and Enrico Valdinoci},
keywords = {Fractional Sobolev spaces, Gagliardo norm, Fractional Laplacian, Nonlocal energy, Sobolev embeddings},
abstract = {This paper deals with the fractional Sobolev spaces Ws,p. We analyze the relations among some of their possible definitions and their role in the trace theory. We prove continuous and compact embeddings, investigating the problem of the extension domains and other regularity results. Most of the results we present here are probably well known to the experts, but we believe that our proofs are original and we do not make use of any interpolation techniques nor pass through the theory of Besov spaces. We also present some counterexamples in non-Lipschitz domains.}
}

@article{Geoga2021,
	title = {Fitting {Mat}{\textbackslash}'ern {Smoothness} {Parameters} {Using} {Automatic} {Differentiation}},
	url = {http://arxiv.org/abs/2201.00090},
	author = {Geoga, Christopher J. and Marin, Oana and Schanen, Michel and Stein, Michael L.},
	year = {2021},
	note = {arXiv: 2201.00090},
}

@InCollection{neumann-encyc-maths,
author       =  {{European Mathematical Society}},
title        = {{Neumann series}},
booktitle    =  {{Encyclopedia of Mathematics}},
howpublished =  {\url{http://encyclopediaofmath.org/index.php?title=Neumann_series&oldid=55168}},
year         =  {2024},
publisher    =  {Springer}
}

\clearpage
\appendix

\section{Preliminary results}
\label{sec:prelim}
\subsection{Kernels satisfying \texorpdfstring{\cref{ass:kernel_dist}}{}}
\label{appx:assumptions}
First, a technical lemma to allow us to bound the sum of the inverse kernel that we make extensive use of in what follows.
 \begin{lemma}
    \label{lem:1TA1}
    For \(\alpha,\beta \in \mathbb{R}_+\), \(A \in \mathbb{R}^{n\times n}\), let \(\alpha I \preceq A \preceq \beta I\), then \(\beta^{-1} I \preceq A^{-1} \preceq \alpha^{-1}I\) and \(\lambda_1(A)^{-1}\leq n^{-1}\1^TA^{-1}\1 \leq \lambda_n(A)^{-1}\) where \(\lambda_1(A) = \max_{i\in[n]}\lambda_i(A)\) and \(\lambda_n = \min_{i\in[n]}\lambda_i(A)\).
\end{lemma}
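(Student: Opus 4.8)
The plan is to prove the two claims separately: first the operator-monotonicity-type inequality $\beta^{-1}I \preceq A^{-1} \preceq \alpha^{-1}I$, and then the sandwich bound on the scalar quadratic form $n^{-1}\mathbf{1}^T A^{-1}\mathbf{1}$. Throughout I will use that $A$ is (implicitly) symmetric positive definite, since $\alpha I \preceq A$ with $\alpha > 0$ forces this.

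For the first claim, I would diagonalise $A = Q\Lambda Q^T$ with $Q$ orthogonal and $\Lambda = \operatorname{diag}(\lambda_i)$. The hypothesis $\alpha I \preceq A \preceq \beta I$ is equivalent to $\alpha \le \lambda_i \le \beta$ for every $i$ (testing the quadratic form against eigenvectors gives the necessity; the bound on all eigenvalues gives sufficiency). Since each $\lambda_i \in [\alpha,\beta] \subset (0,\infty)$, the map $t \mapsto 1/t$ is well-defined and order-reversing on this interval, so $\beta^{-1} \le \lambda_i^{-1} \le \alpha^{-1}$. Re-assembling, $A^{-1} = Q\Lambda^{-1}Q^T$ has all eigenvalues in $[\beta^{-1},\alpha^{-1}]$, which is exactly $\beta^{-1}I \preceq A^{-1} \preceq \alpha^{-1}I$.

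For the second claim, I would apply the Rayleigh-quotient characterisation directly to the positive definite matrix $A^{-1}$: for any nonzero $v$, $\lambda_{\min}(A^{-1})\,\|v\|^2 \le v^T A^{-1} v \le \lambda_{\max}(A^{-1})\,\|v\|^2$. Taking $v = \mathbf{1}$ so that $\|v\|^2 = n$, and noting $\lambda_{\min}(A^{-1}) = 1/\lambda_{\max}(A) = 1/\lambda_1(A)$ and $\lambda_{\max}(A^{-1}) = 1/\lambda_{\min}(A) = 1/\lambda_n(A)$ with the paper's (reversed) indexing convention, dividing through by $n$ yields $\lambda_1(A)^{-1} \le n^{-1}\mathbf{1}^T A^{-1}\mathbf{1} \le \lambda_n(A)^{-1}$.

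There is no real obstacle here — this is a standard spectral-theory exercise — so the main thing to be careful about is bookkeeping: the paper writes $\lambda_1$ for the \emph{largest} eigenvalue and $\lambda_n$ for the \emph{smallest}, which is the opposite of the usual convention, so I must make sure the inversion flips these consistently and that the final inequality direction matches the statement. I would also remark (or silently use) that the $\alpha,\beta$ sandwich is not actually needed for the scalar bound — only positive-definiteness of $A$ is — but including the eigenvalue observation $\alpha \le \lambda_n(A) \le \lambda_1(A) \le \beta$ ties the two halves together and is what makes the lemma convenient downstream.
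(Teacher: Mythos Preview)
Your proposal is correct and is essentially the same argument the paper has in mind: the paper's proof simply cites Horn and Johnson for the first operator inequality and then remarks that the second follows by taking $\alpha=\lambda_n(A)$, $\beta=\lambda_1(A)$, which is exactly your Rayleigh-quotient step applied to $v=\mathbf{1}$. You have spelled out explicitly what the paper leaves to the reference.
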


\begin{proof}
    The first part follows from any standard text e.g. Corollary 7.7.4 (page 495) of  \cite{Horn1985} and the second by simply choosing \(\beta=\lambda_1(A)\) and \(\alpha = \lambda_n(A)\).
\end{proof}

Using \cite{NIST:DLMF} we see that \(1+x < \e^x\). For sufficiently small \(\Delta/l\) we can find lower bounds using \(\e^{-x} < 1-\half x\) for \(x\in (0,1.5936...]\). Hence, for common kernels with exponential terms, such as those that follow, we can obtain Euclidean upper bounds on their respective metric functions.
\eq{
    k_{SE}(\Delta;l) &= \sigma_f^2\e^{-\frac{\Delta^2}{2l^2}}, \\
    k_{Exp}(\Delta;l) &= \sigma_f^2\e^{-\frac{\Delta}{l}}, \\
    k_{M,\nu}(\Delta;l) &= \sigma_f^2\frac{2^{1-\nu}}{\Gamma(\nu)}\left(\sqrt{2\nu}\frac{\Delta}{l}\right)^\nu \mc{K}_\nu\left(\sqrt{2\nu}\frac{\Delta}{l}\right)
}
where \(\mc{K}_\nu\) is the modified Bessel function of the second kind.

Using the exponential bound(s) given directly above, we obtain the following bounds on the associated distance functions:
\eq{
    \frac{\Delta^2}{4l^2}< \rho^2_{SE}(\Delta;l)/\sigma_f^2 & < \frac{\Delta^2}{2l^2} \\
    \frac{\Delta}{2l} < \rho^2_{Exp}(\Delta;l)/\sigma_f^2 & < \frac{\Delta}{l} 
}

We can combine the above with a mixture representation of the general Mat\'{e}rn kernel (see e.g. \cite{Tronarp2018a}) to find a bound for the distance (for \(\nu>1\)):
\eq{
    k_{M,\nu}(\Delta;l) &= \int_0^\infty k_{SE}(\Delta;\sqrt{s})G(s;\nu,\nu/l^2)ds \\
    &\geq \int_0^\infty (1-\frac{\Delta^2}{2s})G(s;\nu,\nu/l^2)ds \\
    &= 1 - \half\Delta^2\E_G{s^{-1}} \\
    &= 1 -  \frac{\Delta^2}{2l^2}\frac{\nu}{\nu-1}
}
and hence
\eq{
    \rho^2_{M,\{\nu>1\}}(\Delta;l)/\sigma_f^2 &< \frac{\Delta^2}{2l^2}\frac{\nu}{\nu-1}.
}
Similarly, for the Rational Quadratic kernel, we obtain:
\eq{
    \rho^2_{RQ}(\Delta;l)/\sigma_f^2 &< \frac{\Delta^2}{2l^2}
}
with a directly comparable argument to that given for the Mat\'{e}rn, using the spectral mixture distribution given in \cite{Tronarp2018a}.

The upper bound is what we need to make the later argument and we note that none of the above (upper) bounds require constraints on \(\Delta\) or \(l\). However, the following argument, which uses an alternative approach to find Euclidean bounds for the Mat\'{e}rn kernel with \(\nu<1\), \emph{does} have such a restriction:
\begin{lemma}[Mat\'{e}rn kernel bound \(\nu<1\)]
    For \(\nu<1\) and \(\Delta/l < \left(\frac{2^\nu\Gamma(\nu)}{\nu^\nu(1-\nu)\abs{\Gamma(-\nu)}}\right)^{\frac{1}{2\nu}}\) we have that
    \eq{
        \rho^2_{M,\{\nu<1\}}(\Delta;l)/\sigma_f^2 &\leq \frac{\abs{\Gamma(-\nu)}}{\Gamma(\nu)}\nu^\nu\left(\frac{\Delta}{l}\right)^{2\nu}.
    }
\end{lemma}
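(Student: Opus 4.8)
The plan is to reduce the claim to a bound on the normalised Mat\'{e}rn correlation and attack that through the ascending power series of \(\mc{K}_\nu\). Since \(\rho^2_{M,\nu}(\Delta;l)/\sigma_f^2 = 1 - c_{M,\nu}(\Delta/l)\), writing \(r:=\Delta/l\) and \(z:=\sqrt{2\nu}\,r\) it suffices to show \(1-c_{M,\nu}(r) \le \frac{\abs{\Gamma(-\nu)}}{\Gamma(\nu)}\nu^\nu r^{2\nu}\) for \(r\) below the stated threshold, where \(c_{M,\nu}(r)=\frac{2^{1-\nu}}{\Gamma(\nu)}\,z^\nu\,\mc{K}_\nu(z)\). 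In contrast to the \(\nu>1\) case just treated, the Gaussian scale-mixture route is unavailable here because the mixing \(\mathrm{Gamma}(\nu,\nu/l^2)\) law has a divergent inverse moment when \(\nu<1\); this is the structural reason a restriction on \(\Delta/l\) must appear.

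First I would substitute the non-integer-order series \(\mc{K}_\nu(z)=\frac{\pi}{2\sin\pi\nu}\bigl(I_{-\nu}(z)-I_\nu(z)\bigr)\), the reflection formula \(\pi/\sin\pi\nu=\Gamma(\nu)\Gamma(1-\nu)\), and the ascending series of \(I_{\pm\nu}\). Collecting the \(z^{-\nu}\) and \(z^{+\nu}\) branches and multiplying through by \(\tfrac{2^{1-\nu}}{\Gamma(\nu)}z^\nu\) gives the exact identity \(c_{M,\nu}(r)=S_1(z)-\frac{\abs{\Gamma(-\nu)}}{\Gamma(\nu)}\,2^{-2\nu}z^{2\nu}\,S_2(z)\), where \(S_1(z)=\sum_{k\ge0}\tfrac{(z/2)^{2k}}{k!\,(1-\nu)(2-\nu)\cdots(k-\nu)}\) and \(S_2(z)=\sum_{k\ge0}\tfrac{(z/2)^{2k}}{k!\,(1+\nu)(2+\nu)\cdots(k+\nu)}\) are entire power series in \(z^2\) (empty products for \(k=0\)) with strictly positive coefficients and \(S_1(0)=S_2(0)=1\); positivity of the coefficients of \(S_1\) is exactly where \(\nu<1\) is used. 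Subtracting from \(1\) and using \(2^{-2\nu}z^{2\nu}=2^{-\nu}\nu^\nu r^{2\nu}\) gives \(1-c_{M,\nu}(r)=\frac{\abs{\Gamma(-\nu)}}{\Gamma(\nu)}2^{-\nu}\nu^\nu r^{2\nu}\,S_2(z)-\bigl(S_1(z)-1\bigr)\), whose leading term as \(r\to0\) is \(\frac{\abs{\Gamma(-\nu)}}{\Gamma(\nu)}2^{-\nu}\nu^\nu r^{2\nu}\) --- precisely the target bound divided by \(2^\nu\), with the first correction, coming from \(S_1\), of order \(r^2\) and \emph{negative}.

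It then remains to absorb the higher-order terms into this factor-\(2^\nu\) of slack. Dropping the non-positive term \(-(S_1(z)-1)\) gives \(1-c_{M,\nu}(r)\le\frac{\abs{\Gamma(-\nu)}}{\Gamma(\nu)}2^{-\nu}\nu^\nu r^{2\nu}\,S_2(z)\), so it is enough to show \(S_2(z)\le 2^\nu\) on the stated range of \(r\). I would establish this by bounding the denominators \((1+\nu)(2+\nu)\cdots(k+\nu)\) below by factorials (or by \((1+\nu)^k\)), collapsing the tail of \(S_2\) to a geometric or exponential series, so that \(S_2(z)\) is controlled by an explicit increasing function of \((z/2)^2=\nu r^2/2\); the inequality \(S_2(z)\le 2^\nu\) then holds up to a restriction of the stated form \(\bigl(\tfrac{2^\nu\Gamma(\nu)}{\nu^\nu(1-\nu)\abs{\Gamma(-\nu)}}\bigr)^{1/(2\nu)}\), with the \((1-\nu)\) factor entering once one reinstates the discarded correction using \(S_1(z)-1\ge(z/2)^2/(1-\nu)\) to sharpen the estimate (equivalently, bounding \(S_2(z)\le S_1(z)\) termwise and playing that against the same lower bound on \(S_1(z)-1\)). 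As a sanity check, at \(\nu=\tfrac12\) the Mat\'{e}rn kernel is the exponential kernel, the threshold becomes \(\Delta/l<2\), and the whole chain degenerates to \(1-e^{-r}\le r\le\sqrt2\,r\), which is consistent with (and looser than) the exponential bound derived above.

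I expect the real work to be concentrated in that last step: obtaining a clean uniform bound \(S_2(z)\le 2^\nu\) --- equivalently a bound on \((z/2)^{-\nu}I_\nu(z)/\Gamma(1+\nu)\) --- whose range of validity matches the stated threshold constant exactly, rather than merely up to a harmless numerical factor. Everything preceding it (the reduction to \(1-c_{M,\nu}\), the two-branch Bessel expansion, and the identification of the \(r^{2\nu}\) leading coefficient as \(\tfrac{\abs{\Gamma(-\nu)}}{\Gamma(\nu)}\nu^\nu\) via the reflection formula) is routine special-function bookkeeping.
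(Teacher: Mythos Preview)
Your setup is the paper's: both expand \(c_{M,\nu}\) via the two-branch ascending series \(K_\nu=\tfrac{\pi}{2\sin\pi\nu}(I_{-\nu}-I_\nu)\) to obtain exactly your \(S_1,S_2\) decomposition, with the same leading \(r^{2\nu}\) coefficient. Where you diverge is only in the final step, which the paper handles far more simply than you anticipate: it truncates each branch after two terms and compares the two resulting second-order corrections directly --- the stated threshold is precisely where the positive \((z/2)^2/(1-\nu)\) coming from \(S_1\) balances the negative \(k{=}1\) contribution from the \(S_2\)-branch (after dropping a harmless \((1+\nu)\) factor for a cleaner sufficient condition). The paper explicitly flags this truncation as an assumption (``we assume that for sufficiently small \(\Delta\) we can truncate the series''), so your full-series route would in fact be the more rigorous one, but it will not land on the stated constant without emulating that same \(k{=}1\) vs.\ \(k{=}1\) comparison. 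Your observation that the true leading coefficient is \(2^{-\nu}\nu^\nu\) rather than \(\nu^\nu\) is also correct: the paper's stated bound is looser than its own derivation by exactly that factor of \(2^\nu\).
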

\begin{proof}
Using equation 3.2 from \cite{Geoga2021}, we assume that for sufficiently small \(\Delta\) we can truncate the series and we retain terms \(k\leq 2\):
\eq{
    K_\nu(x) &= \half\Gamma(\nu)\left(\frac{x}{2}\right)^{-\nu}+\half\Gamma(-\nu)\left(\frac{x}{2}\right)^\nu + \left(\frac{x}{2}\right)^2\half \\
    &\quad\cdot\left[\Gamma(\nu)\left(\frac{x}{2}\right)^{-\nu}\frac{\Gamma(1-\nu)}{\Gamma(2-\nu)}+\Gamma(-\nu)\left(\frac{x}{2}\right)^\nu\frac{\Gamma(1+\nu)}{\Gamma(2+\nu)}\right] \\
    &= \half\Gamma(\nu)\left(\frac{x}{2}\right)^{-\nu} - \half\abs{\Gamma(-\nu)}\left(\frac{x}{2}\right)^\nu + \half\left(\frac{x}{2}\right)^{2-\nu} \frac{\Gamma(\nu)}{1-\nu} - \half\frac{\abs{\Gamma(-\nu)}}{1+\nu}\left(\frac{x}{2}\right)^{2+\nu}
}
where we have written \(-\abs{\Gamma(-\nu)}\) to indicate that \(\Gamma(-\nu)<0\) for \(\nu \in (0,1)\).
We now substitute this into the expression for the general mat\'{e}rn kernel (defining \(\tilde{\Delta}=\sqrt{2\nu}\Delta/l\)):
\eq{
    c_{M,\{\nu<1\}}(\Delta;l) &= \frac{2^{1-\nu}}{\Gamma(\nu)}\tilde{\Delta}^\nu\bigg\{\half\Gamma(\nu)2^\nu\tilde{\Delta}^{-\nu} - \half\abs{\Gamma(-\nu)}2^{-\nu}\tilde{\Delta}^\nu + \half\frac{\Gamma(\nu)}{1-\nu}2^{\nu-2}\\
    &\quad\cdot\tilde{\Delta}^{2-\nu} - \half\frac{\abs{\Gamma(-\nu)}}{1+\nu}2^{-2-\nu}\tilde{\Delta}^{2+\nu}\bigg\}\\
    &\geq 1 - \frac{\abs{\Gamma(-\nu)}}{\Gamma(\nu)}\nu^\nu\left(\frac{\Delta}{l}\right)^{2\nu}
}
where the inequality follows if the following sufficient condition is satisfied (such that the second order terms contribute a positive term):
\eq{
    \Delta/l &< \left(\frac{2^\nu\Gamma(\nu)}{\nu^\nu(1-\nu)\abs{\Gamma(-\nu)}}\right)^{\frac{1}{2\nu}}.
}
\end{proof}
The above constraint is satisfied by, e.g. \(\Delta < 2.2l\) for \(\nu=0.4\) or \(\Delta < 3.05l\) for \(\nu=0.2\). Note also that for \(\nu=\half\) this gives a looser bound than the Exp bound given previously.

Note that although in the above arguments we have assumed an isotropic kernel with a single lengthscale, the above bounds hold in the case of independent lengthscales if we simply substitute \(l\) for \(l_{\min}=\min\{l_1,l_2,\dots,l_d\}\).

\subsubsection{Kernel bounds using Taylor expansion}
If we choose isotropic kernels for which the exponential bound given above is not sufficient, we may use a Taylor expansion to derive an appropriate Euclidean bound for which we can apply the nearest-neighbour convergence rate arguments.

As an example, take the periodic kernel:
\eq{
    k_{Per}(\Delta;l) &= \sigma_f^2\e^{-\frac{2\sin^2(\pi\Delta/r)}{l^2}} \\
    & > 1 - \frac{2}{l^2}\sin^2(\pi\Delta/r) \\
    \rho^2_{Per}(\Delta;l)/\sigma_f^2 &< \frac{2\pi^2}{r^2l^2}\Delta^2
}
where the last line follows from an expansion of the \(\sin\) function for which the bound holds provided \(\Delta < \frac{r\sqrt{6}}{\pi}\). We choose not to pursue this avenue further here due to the restriction that the bounds do not hold everywhere.

\subsection{Function Spaces}
\label{sec:function_spaces}
In this section we will elaborate slightly on the various function spaces invoked as part of this work. Common assumptions from the nonparametric literature, on e.g. \(k\)-NN methods, tend to be that the regression function obeys Lipschitz or H\"{o}lder continuity conditions. The GP community often makes use of both H\"{o}lder continuity as well as Sobolev spaces. For example, both \cite{van_der_vaart_information_2011} and \cite{Nieman2022} assume that an ``\(\alpha\)-regular'' data-generating function belongs to the space \(C^\alpha([0,1]^d\cap W^\alpha([0,1]^d)\) where \(C^\alpha([0,1]^d)\) represents the H\"{o}lder space of functions on \([0,1]^d\) with partial derivatives of order upto \(\floor{\alpha}\) where the highest order partial derivative satisfies a Lipschitz condition of order \(\eta\) where \(\alpha=\floor{\alpha}+\eta\) and \(\eta\in(0,1]\). On the other hand, \(W^\alpha([0,1]^d)\) contains functions on \([0,1]^d\) with partial derivatives upto order \(\floor{\alpha}\) (that are square integrable).
For a more in-depth discussion on the properties of Sobolev spaces, we refer the reader to e.g. \cite{sobolev_spaces_2012}. 

We can begin to reconcile these differences by noting that the \(k\)-NN literature assumes e.g. \(f_0\in C^r(\mc{X})\); so for \(r=\alpha\) and \(\mc{X}=[0,1]^d\), a generative function \(f_0 \in C^\alpha([0,1]^d\cap W^\alpha([0,1]^d)\) would satisfy both sets of conditions. 

The next step is to relate these conditions to those assumed in this manuscript. We take a random function drawn from a WSRF. In the more constrained case that the WSRF is in fact a GP, Kolmogorov's continuity theorem, in conjunction with \ref{ass:kernel_dist}, implies that such a function satisfies a H\"{o}lder condition \emph{locally}. (We are unaware of a related result in the more general WSRF case). In addition, if the function is known to satisfy a H\"{o}lder condition, then it can be shown that a corresponding version of \ref{ass:kernel_dist} holds: \(\abs{f(\xv)-f(\xv')}^2\leq C^2\norm{\xv-\xv'}^{2\gamma} \implies \E{\abs{f(\xv)-f(\xv')}^2} = 2\rho^2(\xv,\xv') \leq C^2\norm{\xv-\xv'}^{2\gamma}.\) In other words, a \(\gamma\)-regular function in the H\"{o}lder sense implies that \ref{ass:kernel_dist} holds with \(p=2\gamma\).

\section{Convergence rates in the well-specified case}
\label{appx:rates}
To obtain our convergence rates we require some results on the spectrum of the kernel matrix and its inverse as well as the properties of monotone matrices (which we will define in the section immediately following). 

\subsection{Matrix Monotonicity}
\label{sec:matrix_monotonicity}
\begin{definition}[Monotone Matrix]
\label{def:monotone}
    A real square matrix \(A\) is monotone if \(Av\geq 0 \iff v\geq 0\) (where \(\geq\) here is elementwise). 
\end{definition}

\begin{lemma}[Monotone Matrix]
\label{lem:monotone}
    Let \(A\) be a real square matrix. \(A\) is \emph{monotone} iff \(A^{-1}\geq 0.\)
\end{lemma}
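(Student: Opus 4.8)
The plan is to prove both directions of the equivalence directly from \cref{def:monotone}, using only elementary linear algebra — no external machinery is required. Throughout, ``\(\geq 0\)'' is read entrywise, and the key structural fact I will exploit is that the nonnegative orthant is closed under multiplication by an entrywise-nonnegative matrix.

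The first step is a preliminary observation: monotonicity of \(A\) forces \(A\) to be invertible, so that the claim ``\(A^{-1}\geq 0\)'' is even well posed. Indeed, if \(Av=0\), then both \(Av\geq 0\) and \(A(-v)\geq 0\), so monotonicity gives \(v\geq 0\) and \(-v\geq 0\), hence \(v=0\); thus \(\ker A=\{0\}\) and \(A^{-1}\) exists. For the forward direction (\(A\) monotone \(\Rightarrow A^{-1}\geq 0\)) I would then argue column by column: writing \(e_j\) for the \(j\)th standard basis vector and \(c_j:=A^{-1}e_j\) for the \(j\)th column of \(A^{-1}\), we have \(Ac_j=e_j\geq 0\), so monotonicity yields \(c_j\geq 0\). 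Since \(j\) was arbitrary, every column of \(A^{-1}\) is nonnegative, i.e. \(A^{-1}\geq 0\).

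For the converse (\(A^{-1}\geq 0\) \(\Rightarrow A\) monotone), note that \(A\) is invertible by hypothesis, so for any vector \(v\) we may write \(v=A^{-1}(Av)\). If \(Av\geq 0\), then \(v\) is the product of the entrywise-nonnegative matrix \(A^{-1}\) with the entrywise-nonnegative vector \(Av\), hence \(v\geq 0\); this establishes the implication ``\(Av\geq 0 \Rightarrow v\geq 0\)'' required by \cref{def:monotone}.

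I do not expect a genuine obstacle here: the proof is short and self-contained. The only real subtlety — and the one place care is needed — is to dispatch invertibility \emph{before} invoking \(A^{-1}\) in the forward direction, and then to keep scrupulous track that every inequality is entrywise so that the closure of the nonnegative orthant under a nonnegative linear map does all the work in both directions.
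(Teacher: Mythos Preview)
Your proof is correct and follows essentially the same route as the paper's: the forward direction inspects the columns \(A^{-1}e_j\) and applies monotonicity to \(A(A^{-1}e_j)=e_j\geq 0\); the converse writes \(v=A^{-1}(Av)\) and uses entrywise nonnegativity of \(A^{-1}\). Your preliminary step deducing invertibility of \(A\) from monotonicity is a careful refinement that the paper leaves implicit.
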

\begin{proof}
    Suppose \(A\) is monotone. Let \(\xv\) be the \(i^{th}\) column of \(A^{-1}\). Then \(A\xv=\bm{e}_i\) with \(\bm{e}_i\) the base vector with \(i^{th}\) element 1. By monotonicity, \(\xv\geq \0\). On the other hand, suppose we have \(A^{-1}\geq 0.\) Then if \(A\xv\geq \0\), \(\xv=A^{-1}A\xv\geq A^{-1}\0\geq \0\), i.e. \(A\) is monotone.
\end{proof}

\begin{lemma}
\label{lem:mix_quad_prod_monotone}
    Let \(A\geq 0\) . Then for \(\xv,\yv \geq 0\), \(\xv^T A^{-1}\yv \geq 0\).
\end{lemma}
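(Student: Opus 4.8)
The plan is to reduce the claim to the entrywise nonnegativity of \(A^{-1}\) and then observe that the bilinear form of an entrywise-nonnegative matrix evaluated against two entrywise-nonnegative vectors is a sum of nonnegative terms, hence nonnegative.

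First I would note that \(A\) is invertible, so that \(A^{-1}\) is well-defined: monotonicity (\cref{def:monotone}) gives \(A\xv \geq \0 \iff \xv \geq \0\), so \(A\xv = \0\) forces both \(\xv \geq \0\) and \(-\xv \geq \0\), i.e. \(\xv = \0\). Then I would invoke \cref{lem:monotone} to pass from the monotonicity hypothesis to \(A^{-1} \geq 0\) componentwise; write \(b_{ij} := (A^{-1})_{ij} \geq 0\).

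Second, expand the form as
\[
    \xv^T A^{-1} \yv \;=\; \sum_{i}\sum_{j} x_i\, b_{ij}\, y_j .
\]
Since \(x_i \geq 0\), \(y_j \geq 0\) and \(b_{ij} \geq 0\) for every pair \((i,j)\), each summand is nonnegative, and therefore so is the whole sum. This is exactly the assertion.

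I do not expect any real obstacle here: the result is essentially a single observation once nonnegativity of \(A^{-1}\) is in hand. The only points needing a moment's care are the well-posedness of \(A^{-1}\) (immediate from \cref{def:monotone}) and the appeal to \cref{lem:monotone} to convert the hypothesis on \(A\) into the entrywise bound \(A^{-1} \geq 0\); everything after that is the elementary sign bookkeeping above.
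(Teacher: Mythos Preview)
You have misread the hypothesis. In this paper \(A \geq 0\) means \(A\) is \emph{entrywise} nonnegative (the elementwise sense flagged in \cref{def:monotone}), not that \(A\) is monotone. Your invertibility argument and the step ``invoke \cref{lem:monotone} to pass from the monotonicity hypothesis to \(A^{-1} \geq 0\)'' both treat the premise as ``\(A\) is monotone''. \cref{lem:monotone} does yield \(A^{-1} \geq 0\) from monotonicity of \(A\), but not from \(A \geq 0\): for instance \(A = \left(\begin{smallmatrix} 2 & 1 \\ 1 & 2 \end{smallmatrix}\right)\) is entrywise nonnegative while \(A^{-1} = \tfrac{1}{3}\left(\begin{smallmatrix} 2 & -1 \\ -1 & 2 \end{smallmatrix}\right)\) is not, so your double-sum sign argument does not go through as written.

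The paper runs \cref{lem:monotone} in the other direction: from \(A \geq 0\) it concludes that \(A^{-1}\) is monotone (apply the lemma with \(A^{-1}\) in place of \(A\), so that \((A^{-1})^{-1} = A \geq 0\)), and then uses the implication \(\yv \geq 0 \Rightarrow A^{-1}\yv \geq 0\) contained in \cref{def:monotone} for \(A^{-1}\), finishing with \(\xv^T(A^{-1}\yv) \geq 0\). In short, the paper argues via monotonicity of \(A^{-1}\) rather than entrywise nonnegativity of \(A^{-1}\); your appeal to \cref{lem:monotone} has the roles of \(A\) and \(A^{-1}\) swapped relative to the paper.
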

\begin{proof}
    If \(A\geq 0\) then \(A^{-1}\) is monotone. By the definition of monotonicity, \(\yv\geq 0 \implies A^{-1}\yv \geq 0\). Then \(\xv^T A^{-1}\yv = \xv^T\bm{w} \geq 0\) since both \(\bm{w}=A^{-1}\yv\) and \(\xv\) are elementwise nonnegative.
\end{proof}

\subsection{Spectral Properties of Gram Matrices}
\label{sec:spectrum}
A key result we make use of is \emph{Gershgorin's theorem} which we state without proof:

\begin{theorem}[Gershgorin]
\label{thm:gershgorin}
Given a matrix \(A \in \mathbb{C}^{n\times n}\), \(R_i=\sum_{j\neq i}\abs{a_{ij}}\) and define the disc \(D(a_{ii},R_i)\) of radius \(R_i\) centred on \(a_{ii}\), then
    every eigenvalue of \(A\) lies within at least one such disc.
\end{theorem}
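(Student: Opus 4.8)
The plan is to use the standard maximal-coordinate argument. Let $\lambda$ be an arbitrary eigenvalue of $A$ with associated eigenvector $v \neq 0$, and choose an index $i$ for which $\abs{v_i}$ is maximal among the components of $v$; since $v \neq 0$, this gives $\abs{v_i} > 0$. Reading off the $i$-th row of the identity $Av = \lambda v$ yields $\sum_{j} a_{ij} v_j = \lambda v_i$, which rearranges to $(\lambda - a_{ii}) v_i = \sum_{j \neq i} a_{ij} v_j$.

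Next I would take absolute values on both sides, apply the triangle inequality, and divide through by $\abs{v_i} > 0$. This gives $\abs{\lambda - a_{ii}} \leq \sum_{j \neq i} \abs{a_{ij}}\, \abs{v_j}/\abs{v_i} \leq \sum_{j \neq i} \abs{a_{ij}} = R_i$, where the final inequality uses $\abs{v_j} \leq \abs{v_i}$ for all $j$ by the choice of $i$. Hence $\lambda \in D(a_{ii}, R_i)$, and since $\lambda$ was an arbitrary eigenvalue, every eigenvalue of $A$ lies in at least one such disc.

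There is essentially no real obstacle: the only point needing care is that the eigenvector is nonzero, so that a coordinate of maximal modulus exists and the division by $\abs{v_i}$ is legitimate. Note that the index $i$ (hence the disc) depends on $\lambda$, which is precisely why the conclusion reads ``at least one disc'' rather than a single fixed disc. One could optionally append the dual row/column statement by applying the result to $A^{T}$ (which shares the spectrum of $A$), but this is not needed for the way \cref{thm:gershgorin} is used in the sequel.
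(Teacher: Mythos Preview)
Your argument is the standard and correct proof of Gershgorin's theorem: pick an eigenpair, isolate the row with the maximal-modulus eigenvector component, and bound using the triangle inequality. There is nothing to fault.

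As for comparison, the paper explicitly states this result \emph{without proof} (it is invoked only as a tool in \cref{cor:1TK1}), so there is no alternative argument to contrast with. Your remark about the dual column statement via $A^{T}$ is also correct but, as you note, unnecessary here since only the row-sum bound on $\lambda_1(K)$ is used downstream.
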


This allows us, when combined with \cref{lem:1TA1}, to obtain upper and lower bounds on the sum of the elements of the kernel inverse, which is useful for computing our main results.

\begin{corollary}[Gram matrix inverse sum]
\label{cor:1TK1}
For a kernel Gram matrix \(K\) of size \(n\times n\) with elements \(K_{ij}=\sigma_f^2 - \eps_{ij} + \sigma_\xi^2\delta_{ij}\) and \(\eps^{\bullet\bullet}_{\min}=\min_{i,j}\eps_{ij}\)
    \[(\sigma_f^2 + \sigma_\xi^2 + (n-1)(\sigma_f^2-\eps^{\bullet\bullet}_{\min}))^{-1} \leq n^{-1}\1^TK^{-1}\1 \leq \frac{1}{\sigma_\xi^2}.\]
\end{corollary}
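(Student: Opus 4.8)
The plan is to apply Lemma \ref{lem:1TA1} to the Gram matrix $K$, which reduces the problem to bounding the extreme eigenvalues $\lambda_1(K)$ and $\lambda_n(K)$ from above and below respectively. Once those two spectral bounds are in hand, Lemma \ref{lem:1TA1} gives $\lambda_1(K)^{-1} \leq n^{-1}\mathbf{1}^T K^{-1}\mathbf{1} \leq \lambda_n(K)^{-1}$, and substituting the eigenvalue bounds yields the stated inequality.

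For the \emph{upper} bound on $\lambda_1(K)$: I would write $K = (\sigma_f^2 - \eps^{\bullet\bullet}_{\min})\mathbf{1}\mathbf{1}^T + E + \sigma_\xi^2 I$ where $E$ has entries $E_{ij} = \eps^{\bullet\bullet}_{\min} - \eps_{ij} \leq 0$ off the relevant structure — actually the cleanest route is Gershgorin (Theorem \ref{thm:gershgorin}) directly on $K$. Each diagonal entry is $\sigma_f^2 + \sigma_\xi^2$ (taking $\eps_{ii}=0$), and the off-diagonal row sum is $R_i = \sum_{j\neq i}(\sigma_f^2 - \eps_{ij}) \leq (n-1)(\sigma_f^2 - \eps^{\bullet\bullet}_{\min})$ since $\eps_{ij} \geq \eps^{\bullet\bullet}_{\min}$ and the entries $\sigma_f^2 - \eps_{ij}$ are nonnegative (as $\rho^2 \leq \sigma_f^2$). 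Hence every eigenvalue, in particular $\lambda_1(K)$, is at most $\sigma_f^2 + \sigma_\xi^2 + (n-1)(\sigma_f^2 - \eps^{\bullet\bullet}_{\min})$, giving the left-hand inequality.

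For the \emph{lower} bound on $\lambda_n(K)$: the matrix $K - \sigma_\xi^2 I$ is a Gram matrix of the covariance function and hence positive semidefinite, so $\lambda_n(K) \geq \sigma_\xi^2$. This immediately gives $n^{-1}\mathbf{1}^T K^{-1}\mathbf{1} \leq \lambda_n(K)^{-1} \leq \sigma_\xi^{-2}$, the right-hand inequality.

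The main obstacle — really the only subtlety — is ensuring the sign conditions needed at each step: that the off-diagonal entries $\sigma_f^2 - \eps_{ij}$ are nonnegative (so the Gershgorin radius bound goes the right way and we don't have to worry about cancellation), which follows from $\rho^2(\xv,\xv') \leq \sigma_f^2$, i.e. $\eps_{ij}\leq\sigma_f^2$; and that $K - \sigma_\xi^2 I \succeq 0$, which is just positive-definiteness of $c(\cdot,\cdot)$. Both are mild and already available in the setup. Everything else is a direct substitution into Lemma \ref{lem:1TA1}.
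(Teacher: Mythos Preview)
Your proposal is correct and matches the paper's proof essentially step for step: the paper also invokes Lemma~\ref{lem:1TA1} together with Gershgorin's theorem (Theorem~\ref{thm:gershgorin}) to bound \(\lambda_1(K)\) by \(\sigma_f^2+\sigma_\xi^2+(n-1)(\sigma_f^2-\eps^{\bullet\bullet}_{\min})\), and treats the right-hand inequality as immediate from \(\lambda_n(K)\geq\sigma_\xi^2\). Your explicit justification of the latter via positive semidefiniteness of \(K-\sigma_\xi^2 I\) is a touch more detailed than the paper's one-line ``trivial'', but the argument is the same.
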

\begin{proof}
    The RHS is trivial and follows directly from \ref{lem:1TA1}. The LHS follows easily from Gershgorin's theorem (\ref{thm:gershgorin}) and \ref{lem:1TA1}: for our case \(a_{ii}=\sigma_f^2+\sigma_\xi^2\) and the radius is bounded from above by \(R_i = \sum_{j\neq i}\abs{\sigma_f^2 - \eps_{ij}} \leq (n-1)(\sigma_f^2-\eps^{\bullet\bullet}_{\min})\). Thus, \(\lambda_1(K_\xi) \leq \sigma_f^2 + \sigma_\xi^2 + (n-1)(\sigma_f^2-\eps^{\bullet\bullet}_{\min})\) and the result follows.
\end{proof}

We have that the predictive MSE (expectation over \((\yrv,\mathrm{y}^*)\)) is given by \(f_n^{\MSE} = \predvar_N = \sigma_f^2 + \sigma_\xi^2 - \kappa_N\), with \(\kappa_N = \kstar_N^TK_N^{-1}\kstar_N\) (as per standard GPR: \eqref{eq:gp_var_pred}) under the assumption that \(\yrv\sim\mc{WSRF}\) (see \cite{GPnn} for further commentary on this and in particular Lemma 13 for a proof that these expectations hold).
Here and henceforth we define \(K^\infty=\sigma_\xi^2I + \sigma_f^2\1\1^T\) and \(\gamma=\frac{\sigma_f^2}{\sigma_\xi^2+m\sigma_f^2}\). Note that \(\1\) is an eigenvector of the symmetric operator \(I-\gamma\1\1^T\) with eigenvalue \(1-m\gamma\) and so similarly an eigenvector of \((K^\infty)^{-1}=\sigma_\xi^{-2}(I-\gamma\1\1^T)\) with eigenvalue \(\sigma_\xi^{-2}(1-m\gamma)\). Hatted variants of these identities follow intuitively. We make use of these facts in the following lemmas and proofs.

So to find an upper bound on the MSE we find a lower bound for \(\kappa_N\):

\begin{lemma}[A lower bound on \(\kappa_N\).]
\label{lem:kappa}
With \(\hat{c}(\cdot)=c(\cdot)\) and \(\thv=\tv\), i.e. a well-specified model,
\eq{
    \kappa_N &\geq \frac{\sigma_f^2-2\avg{\eps_\bullet}}{1 + \sigma_\xi^2/m\sigma_f^2} -  \bigO{\eps^2}.
} 
\end{lemma}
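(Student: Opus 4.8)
The plan is to expand $\kappa_N = \kstar_N^T K_N^{-1} \kstar_N$ around its limiting value by writing both $\kstar_N$ and $K_N$ as perturbations of their $n\to\infty$ forms. Since the model is well-specified, $[K_N]_{ij} = \sigma_f^2 - \eps_{ij} + \sigma_\xi^2\delta_{ij}$ and $[\kstar_N]_i = \sigma_f^2 - \eps_i$, so I would write $K_N = K^\infty - E$ where $K^\infty = \sigma_\xi^2 I + \sigma_f^2 \1\1^T$ and $E$ has entries $\eps_{ij}$ (with $\eps_{ii}=0$), and $\kstar_N = \sigma_f^2\1 - \bm{\eps}$ with $\bm{\eps} = (\eps_1,\dots,\eps_m)^T$. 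The first step is to recall from the remarks preceding the lemma that $\1$ is an eigenvector of $(K^\infty)^{-1} = \sigma_\xi^{-2}(I - \gamma\1\1^T)$ with eigenvalue $\sigma_\xi^{-2}(1 - m\gamma) = (\sigma_\xi^2 + m\sigma_f^2)^{-1}$, which immediately gives the leading term: $(\sigma_f^2\1)^T (K^\infty)^{-1}(\sigma_f^2\1) = m\sigma_f^4/(\sigma_\xi^2 + m\sigma_f^2) = \sigma_f^2/(1 + \sigma_\xi^2/m\sigma_f^2)$, matching the claimed bound.

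Next I would control the corrections. Use the Neumann-type identity $K_N^{-1} = (K^\infty - E)^{-1} = (K^\infty)^{-1} + (K^\infty)^{-1}E(K^\infty)^{-1} + \bigO{E^2}$ (valid under the spectral conditions that underlie \ref{ass:eps_min}, though that assumption is not formally invoked here — I would note the expansion is to second order in $\eps$ as per the paper's stated convention). Substituting into $\kappa_N$ and collecting terms by order in $\eps$, the first-order contribution splits into a piece linear in $\bm{\eps}$ (coming from the $\kstar_N$ perturbation) and a piece linear in $E$ (coming from the $K_N^{-1}$ perturbation). For the former, the cross term is $-2\sigma_f^2\1^T(K^\infty)^{-1}\bm{\eps} = -2\sigma_f^2(\sigma_\xi^2 + m\sigma_f^2)^{-1}\sum_i\eps_i = -2m\sigma_f^2\avg{\eps_\bullet}/(\sigma_\xi^2+m\sigma_f^2) = -2\avg{\eps_\bullet}/(1+\sigma_\xi^2/m\sigma_f^2)$, which is exactly the second term in the claimed lower bound. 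The key observation for the $E$-term is a sign argument: the contribution $\sigma_f^4 \1^T(K^\infty)^{-1}E(K^\infty)^{-1}\1 = \sigma_f^4(\sigma_\xi^2+m\sigma_f^2)^{-2}\1^T E\1 \geq 0$ since all $\eps_{ij}\geq 0$, so this term only helps the lower bound and can be dropped. All remaining terms are $\bigO{\eps^2}$.

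The main obstacle I anticipate is handling the second-order remainder cleanly — in particular justifying that the omitted terms (the $E^2$ term in the Neumann expansion, and the $\bm{\eps}^T(K^\infty)^{-1}\bm{\eps}$ term, and cross terms between $\bm{\eps}$ and $E$) are genuinely $\bigO{\eps^2}$ uniformly, which requires a bound on $\|(K^\infty)^{-1}\|$ that is uniform in $n$ and $m$ (it is, since $\|(K^\infty)^{-1}\| \leq \sigma_\xi^{-2}$), together with $\|E\| \lesssim m\eps_{\max}$ and $\|\bm{\eps}\| \lesssim \sqrt{m}\eps_{\max}$. One must be slightly careful that the $\1^T E\1$ term, while nonnegative, is of size $\sim m^2\eps$ and so not negligible relative to the $\sim m\eps$ leading correction — but since it enters with a favourable sign this is not a problem for a \emph{lower} bound; it would be an issue for a matching upper bound. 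I would make this sign bookkeeping explicit and then simply assemble the pieces. A full proof with the remainder estimates would be deferred to the appendix in the same style as the other lemmas.
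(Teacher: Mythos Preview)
Your proposal is correct and follows essentially the same approach as the paper: decompose \(\kstar_N = \sigma_f^2\1 - \bm{\eps}\), apply a Neumann expansion of \(K_N^{-1}\) about \((K^\infty)^{-1}\), and discard nonnegative terms (\(\bm{\eps}^TK_N^{-1}\bm{\eps}\) and the first-order \(E\)-correction \(\1^T(K^\infty)^{-1}E(K^\infty)^{-1}\1\)) to obtain the lower bound. The paper packages the Neumann computation into two auxiliary lemmas for \(\1^TK^{-1}\1\) and \(\bm{\eps}^TK^{-1}\1\) (each carried to second order, which is why its intermediate expression retains an \(\avg{\eps_{\bullet\bullet}}\) term) whereas you work directly at first order with a sign argument, but the substance is the same.
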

\begin{proof}
    We start with the following decomposition of \(\kappa\):
    \begin{align}
        \kappa &= \sigma_f^4 \1^TK^{-1}\1 - 2\sigma_f^2\bm{\eps}^TK^{-1}\1 + \bm{\eps}^TK^{-1}\bm{\eps} \nonumber\\
        &\geq \sigma_f^4 \1^TK^{-1}\1 - 2\sigma_f^2\bm{\eps}^TK^{-1}\1. \label{eq:kappa_bound1}
    \end{align}
    From \cref{lem:1TKinv1}, we know that 
    \begin{align}
        \sigma_f^4\1^TK^{-1}\1 &= \sigma_f^2\left(1+\frac{\avg{\eps_{\bullet\bullet}}-m^{-1}\sigma_\xi^2}{\sigma_f^2}\right) \pm\bigO{\eps^2+m^{-2}}. \label{eq:1TKinv1}
    \end{align}

    We can plug the above and \cref{lem:epsKinv1} back into \eqref{eq:kappa_bound1} to give
    \eq{
        \kappa_N &= \sigma_f^2 - \frac{\sigma_\xi^2}{m} +\avg{\eps_{\bullet\bullet}}-\frac{2\avg{\eps_\bullet} }{1 + \sigma_\xi^2/m\sigma_f^2} \pm \bigO{\eps^2 + m^{-2}}.
    }
    \end{proof}

The preceding lemma makes use of the following supplementary lemmas:

\begin{lemma}[Approximating the kernel inverse; independent of misspecification assumptions]
\label{lem:Kinv}
Let \(E=K-K^\infty\). Then,
\begin{align*}
    K^{-1} &= 3(K^\infty)^{-1}  - 3(K^\infty)^{-1} K(K^\infty)^{-1}  + (K^\infty)^{-1} K(K^\infty)^{-1} K(K^\infty)^{-1} - T,
\end{align*}
where \(T=\bigO{E^3}.\)
\end{lemma}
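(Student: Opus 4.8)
The statement is essentially a Neumann (resolvent) expansion of $K^{-1}$ about $K^\infty$, truncated at second order in the perturbation $E=K-K^\infty$ and then re-expressed in terms of $K$. Write $A:=(K^\infty)^{-1}$; since $K=K^\infty(I+AE)$ we have $K^{-1}=(I+AE)^{-1}A$. The plan is: (i) expand $(I+AE)^{-1}$, keeping $I-AE+(AE)^2$ and carrying the tail explicitly; (ii) substitute $E=K-K^\infty$ into the three retained terms and collapse them using $K^\infty A=AK^\infty=I$; (iii) bound the tail and check it is $\bigO{E^3}$.

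For (i), rather than invoking convergence of the full series I would use the exact identity $(I+M)^{-1}=I-M+M^2-M^3(I+M)^{-1}$, valid whenever $I+M$ is invertible; here $M=AE$ and $I+AE=(K^\infty)^{-1}K$ is invertible since $K\succ 0$. This gives $K^{-1}=A-AEA+AEAEA-(AE)^3(I+AE)^{-1}A$, and because $(I+AE)^{-1}A=(K^\infty(I+AE))^{-1}=K^{-1}$ the remainder is exactly
\[
T=(AE)^3K^{-1}=(K^\infty)^{-1}E(K^\infty)^{-1}E(K^\infty)^{-1}E\,K^{-1}.
\]
For (ii), $AK^\infty A=A$ and $K^\infty A=I$ give $AEA=AKA-A$ and $EA=KA-I$, so $AEAEA=(AEA)(EA)=(AKA-A)(KA-I)=AKAKA-2AKA+A$; adding the three retained terms, $A-(AKA-A)+(AKAKA-2AKA+A)=3A-3AKA+AKAKA$, which is exactly the claimed right-hand side.

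For (iii), $K^\infty=\sigma_\xi^2I+\sigma_f^2\1\1^T$ has eigenvalues $\sigma_\xi^2$ and $\sigma_\xi^2+m\sigma_f^2$, so $\|(K^\infty)^{-1}\|=\sigma_\xi^{-2}$; and $K\succeq\sigma_\xi^2I$ (the Gram part is positive semidefinite) gives $\|K^{-1}\|\le\sigma_\xi^{-2}$, whence $\|T\|\le\sigma_\xi^{-8}\|E\|^3=\bigO{E^3}$ — and, via a Gershgorin bound (\cref{thm:gershgorin}) on $\|E\|$ in terms of the $\eps_{ij}$, equivalently $\bigO{\eps^3}$ in the notation used elsewhere. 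There is no serious obstacle in the lemma itself; the point to be careful about is that this truncation is meant to be fed into further expansions (e.g.\ in \cref{lem:kappa}), where the $\bigO{E^3}$ remainder is only negligible in the regime where the nearest-neighbour separations $\eps_{ij}$ are small, i.e.\ large $n$ — precisely the caveat flagged in \cref{sec:exp}. Note that only the form of $K^\infty$ and positive-definiteness of $K$ are used, with no reference to the modelling versus generative kernel, which is why the result holds independently of the misspecification assumptions.
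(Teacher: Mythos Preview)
Your argument is correct and follows the same Neumann-expansion strategy as the paper: write $K^{-1}$ as a perturbation of $(K^\infty)^{-1}$, keep terms through second order in $E$, and collapse using $E=K-K^\infty$ to obtain $3A-3AKA+AKAKA$. The algebra in your step~(ii) matches the paper's line by line.

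The one technical difference is in how the tail is handled. The paper expands the full Neumann series $Q\sum_{k\ge 0}(-EQ)^k$ and then proves convergence by showing $\norm{EQ}_1<1$ via the matrix $1$-norm, exploiting the fact that $\norm{Q}_1=(\sigma_\xi^2+m\sigma_f^2)^{-1}$ is small enough to absorb $\norm{E}_1\le m\sigma_f^2$ unconditionally. You instead use the exact finite identity $(I+M)^{-1}=I-M+M^2-M^3(I+M)^{-1}$, which sidesteps convergence entirely and yields the explicit remainder $T=(AE)^3K^{-1}$; your operator-norm bound $\norm{T}\le\sigma_\xi^{-8}\norm{E}^3$ then follows immediately from $K\succeq\sigma_\xi^2 I$. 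Your route is arguably cleaner for the lemma as stated, while the paper's $1$-norm estimate has the side benefit of showing the full series converges for \emph{any} configuration of the $\eps_{ij}$ (a fact referenced later in the experiments section when discussing short lengthscales).
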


\begin{proof}
Using the additive decomposition of the kernel matrix we employ a Neumann series \cite{neumann-encyc-maths} expansion of the inverse and defining \(Q:=(K^\infty)^{-1}\):
\eq{
        K^{-1} &= (E+K^\infty)^{-1} = (K^\infty)^{-1}[E(K^\infty)^{-1}+I]^{-1} \\
        &= Q[I - EQ + EQEQ] - T  \\
        &= 2Q  - Q KQ + Q KQ KQ  - 2Q KQ  + Q - T \\
        &= 3(K^\infty)^{-1}  - 3(K^\infty)^{-1} K(K^\infty)^{-1} + (K^\infty)^{-1} K(K^\infty)^{-1} K(K^\infty)^{-1} -T.  
    }
    which converges if \(\norm{EQ}<1\) for a suitable choice of matrix norm. We choose the matrix 1-norm for convenience.

    Firstly we have that \(Q=\frac{1}{\sigma_\xi^2}\left(I - \frac{\sigma_f^2}{\sigma_\xi^2+m\sigma_f^2}\1\1\right)\) from which we see that \(\norm{Q}_1 = \frac{1}{\sigma_\xi^2+m\sigma_f^2}\): 
    \eq{
        \norm{Q}_1 &= \max_i \sum_j \frac{1}{\sigma_\xi^2}\left(\delta_{ij} - \frac{\sigma_f^2}{\sigma_\xi^2+m\sigma_f^2}1_i1_j\right) \\
        &= \frac{1}{\sigma_\xi^2}\left(1 - \frac{m\sigma_f^2}{\sigma_\xi^2+m\sigma_f^2}\right) \\
        &= \frac{1}{\sigma_\xi^2+m\sigma_f^2}.
    }
    Now, note that \(\norm{E}_1 = \max_i\sum_j\eps_{ij}\leq m\sigma_f^2\) since \(\eps_{ij}\leq\sigma_f^2\) for all \(i,j\).
    Then,
    \eq{
        \norm{EQ}_1 &\leq \norm{E}_1\norm{Q}_1 \\
        &\leq \frac{m\sigma_f^2}{\sigma_\xi^2+m\sigma_f^2} \\
        & < 1
        }
        for all \(\sigma_\xi^2 > 0\) as desired.
\end{proof}

\begin{lemma}
\label{lem:1TKinv1}
    Under the same conditions as \ref{lem:Kinv},
\eq{
    \1^TK^{-1}\1 &= \sigma_f^{-2}\!\left(1+\frac{\avg{\eps_{\bullet\bullet}}-m^{-1}\sigma_\xi^2}{\sigma_f^2}\right) \!\pm\! \bigO{\eps^2+m^{-2}}
}
\end{lemma}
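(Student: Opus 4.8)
The plan is to contract the Neumann expansion from \cref{lem:Kinv} against \(\1\) on both sides and exploit that \(\1\) is an eigenvector of \(Q := (K^\infty)^{-1}\) with eigenvalue \(a := \sigma_\xi^{-2}(1-m\gamma) = (\sigma_\xi^2 + m\sigma_f^2)^{-1}\). This collapses each matrix term into a scalar: \(\1^T(3Q)\1 = 3am\), \(\1^T(QKQ)\1 = a^2\,\1^TK\1\), and \(\1^T(QKQKQ)\1 = a^2\,\1^TKQK\1\), so that
\[
  \1^TK^{-1}\1 = 3am - 3a^2\,\1^TK\1 + a^2\,\1^TKQK\1 - \1^TT\1 .
\]

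First I would reduce the surviving matrix quantities to first order in the perturbation \(E := K - K^\infty\) (whose entries are \(-\eps_{ij}\) with \(\eps_{ii}=0\)), using \(K^\infty\1 = a^{-1}\1\). This gives \(\1^TK\1 = a^{-1}m + \1^TE\1\) exactly, and, since \(QK\1 = \1 + QE\1\) together with \(Q^T=Q\) and \(Q\1 = a\1\), also \(\1^TKQK\1 = a^{-1}m + 2\,\1^TE\1 + \bigO{E^2}\). Substituting, the leading \(am\) contributions telescope (\(3 - 3 + 1 = 1\)) and the \(\eps\)-linear terms combine to \(-a^2\,\1^TE\1\), leaving
\[
  \1^TK^{-1}\1 = am - a^2\,\1^TE\1 + \bigO{E^2 + m^{-2}} .
\]

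To finish, I would use \(\1^TE\1 = -\sum_{i\neq j}\eps_{ij}\) and insert \(am = \sigma_f^{-2}(1 + \sigma_\xi^2/m\sigma_f^2)^{-1} = \sigma_f^{-2}(1 - m^{-1}\sigma_\xi^2/\sigma_f^2) + \bigO{m^{-2}}\) and \(a^2 = (m\sigma_f^2)^{-2}(1 + \bigO{m^{-1}})\); with the definition of \(\avg{\eps_{\bullet\bullet}}\) this turns \(-a^2\,\1^TE\1\) into \(\sigma_f^{-4}\avg{\eps_{\bullet\bullet}}\) up to \(\bigO{\eps^2 + m^{-2}}\), and assembling the pieces yields \(\1^TK^{-1}\1 = \sigma_f^{-2}\big(1 + (\avg{\eps_{\bullet\bullet}} - m^{-1}\sigma_\xi^2)/\sigma_f^2\big) \pm \bigO{\eps^2 + m^{-2}}\). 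Per the convention of \cref{sec:notation}, the quadratic-in-\(\eps\) terms, the mixed \(\eps/m\) terms (each bounded by \(\half(\eps^2 + m^{-2})\) by AM--GM), and the Neumann remainder are absorbed into the error.

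The main obstacle is the bookkeeping of the two independent small quantities — the \(\eps_{ij}\) and \(m^{-1}\) — made delicate by the vector \(\1\) (of Euclidean norm \(\sqrt m\)) multiplying the Neumann tail \(T = \bigO{E^3}\) and the second-order-in-\(E\) remainders. Here I would lean on the \(1\)-norm estimates established in the proof of \cref{lem:Kinv}, namely \(\norm{Q}_1 = a\) and \(\norm{E}_1 \leq m\sigma_f^2\) (indeed \(\norm{E}_1 \to 0\) as \(n\to\infty\)), together with \(\abs{\1^TM\1} \leq m\norm{M}_1\), to certify that these terms are genuinely \(\bigO{\eps^2 + m^{-2}}\) and no larger; a secondary nuisance is merely matching the precise normalisation in the definition of \(\avg{\eps_{\bullet\bullet}}\).
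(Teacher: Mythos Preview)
Your proposal is correct and follows essentially the same strategy as the paper: contract the Neumann expansion of \cref{lem:Kinv} against \(\1\), exploit that \(\1\) is an eigenvector of \(Q\), and expand in \(m^{-1}\) and \(\eps\). The only notable difference is cosmetic: the paper handles the cubic term \(\1^TQKQKQ\1\) by writing \(Q=\sigma_\xi^{-2}(I-\gamma\1\1^T)\) and then invoking \(\1^TK^2\1 = m^{-1}(\1^TK\1)^2 + \bigO{\eps^2}\) (\cref{lem:1TK21}), whereas you decompose \(K=K^\infty+E\) directly to get \(\1^TKQK\1 = a^{-1}m + 2\,\1^TE\1 + \bigO{E^2}\), which is slightly more economical and sidesteps the need for that auxiliary lemma.
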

\begin{proof}
Using \ref{lem:Kinv} (neglecting the \(\bigO{E^3}\) term) and observing that \(\1^TK\1 = m^2\sigma_f^2 + m\sigma_\xi^2 - \sum_{i\neq j}\eps_{ij}\):
\eq{
    \1^TK^{-1}\1 &= 3m\sigma_\xi^{-2}(1-m\gamma) - 3\sigma_\xi^{-4}(1-m\gamma)^2\1^TK\1 + \sigma_\xi^{-4}(1-m\gamma)^2\1^TK(K^\infty)^{-1}K\1 \\
    &= 3\sigma_\xi^{-2}(1-m\gamma)(m-\sigma_\xi^{-2}(1-m\gamma)\1^TK\1) + \sigma_\xi^{-6}(1-m\gamma)^2\left[\1^TK^2\1 - \gamma(\1^TK\1)^2\right] \\
    &= \frac{3}{\sigma_f^2}\left(1-\frac{1}{m\sigma_f^2}(m\sigma_f^2+\sigma_\xi^2-m\avg{\eps_{\bullet\bullet}})\right) + \frac{(\1^TK\1)^2}{m^2\sigma_\xi^2\sigma_f^4}\left[m^{-1}-m^{-1}\left(1-\frac{\sigma_\xi^2}{m\sigma_f^2}\right)\right] \\
    &= \frac{3}{\sigma_f^4}\left(\avg{\eps_{\bullet\bullet}}-\frac{\sigma_\xi^2}{m}\right) + \frac{(m\sigma_f^2+\sigma_\xi^2-m\avg{\eps_{\bullet\bullet}})^2}{m^2\sigma_f^6} \\
    &= \frac{3}{\sigma_f^4}(\avg{\eps_{\bullet\bullet}}-m^{-1}\sigma_\xi^2) + \frac{1}{\sigma_f^2} - \frac{2}{\sigma_f^4}(\avg{\eps_{\bullet\bullet}}-m^{-1}\sigma_\xi^2) \pm \bigO{\eps^2+m^{-2}} \\
    &= \frac{1}{\sigma_f^2} + \frac{\avg{\eps_{\bullet\bullet}}-m^{-1}\sigma_\xi^2}{\sigma_f^4} \pm \bigO{\eps^2+m^{-2}}.
}
\end{proof}

\begin{lemma}
\label{lem:epsKinv1}
    Under the same conditions as \ref{lem:Kinv},
    \eq{
        \bm{\eps}^TK^{-1}\1 &= \frac{\avg{\eps_\bullet}}{\sigma_f^2}(1+\sigma_\xi^2/m\sigma_f^2)^{-1} + \bigO{\eps^2}.
    }
\end{lemma}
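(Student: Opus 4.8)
The plan is to substitute the Neumann expansion of \(K^{-1}\) from \cref{lem:Kinv} into the bilinear form \(\bm{\eps}^T K^{-1}\1\) and track orders in \(\eps\), exploiting the fact (recorded in the preamble to \cref{lem:kappa}) that \(\1\) is an eigenvector of \((K^\infty)^{-1}\). Because the vector \(\bm{\eps}\) is itself \(\bigO{\eps}\), only the zeroth-order part of \(K^{-1}\) in \(E := K - K^\infty\) is needed to produce the leading term, and the first-order part suffices to certify that the remainder is \(\bigO{\eps^2}\).

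First I would record the eigen-identity: since \((K^\infty)^{-1} = \sigma_\xi^{-2}(I - \gamma\1\1^T)\) with \(\gamma = \sigma_f^2/(\sigma_\xi^2 + m\sigma_f^2)\), we have \((K^\infty)^{-1}\1 = \sigma_\xi^{-2}(1 - m\gamma)\1 = (\sigma_\xi^2 + m\sigma_f^2)^{-1}\1\). Hence the zeroth-order contribution is
\[
\bm{\eps}^T (K^\infty)^{-1}\1 = \frac{1}{\sigma_\xi^2 + m\sigma_f^2}\sum_{i=1}^m \eps_i = \frac{m\avg{\eps_\bullet}}{\sigma_\xi^2 + m\sigma_f^2} = \frac{\avg{\eps_\bullet}}{\sigma_f^2}\Bigl(1 + \frac{\sigma_\xi^2}{m\sigma_f^2}\Bigr)^{-1},
\]
which matches the claimed leading term exactly.

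Next I would handle the remainder. Re-expanded in powers of \(E\), \cref{lem:Kinv} reads \(K^{-1} = (K^\infty)^{-1} - (K^\infty)^{-1}E(K^\infty)^{-1} + \bigO{E^2}\) (legitimate under the same convergence condition \(\norm{E(K^\infty)^{-1}}_1 < 1\) used there). The order-\(E\) term contributes \(-(\sigma_\xi^2 + m\sigma_f^2)^{-1}\,\bm{\eps}^T (K^\infty)^{-1} E\1\); since \(E\) has entries \(-\eps_{ij}\) with \(\eps_{ij} \le \sigma_f^2\), \((K^\infty)^{-1}\) has bounded norm (uniformly in \(\eps\)), and \(\bm{\eps}\) has entries \(\bigO{\eps}\), this is a product of two \(\bigO{\eps}\) factors, hence \(\bigO{\eps^2}\); the \(\bigO{E^2}\) tail is smaller still. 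Summing the two pieces yields \(\bm{\eps}^T K^{-1}\1 = \frac{\avg{\eps_\bullet}}{\sigma_f^2}(1 + \sigma_\xi^2/m\sigma_f^2)^{-1} + \bigO{\eps^2}\).

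I do not anticipate a genuine obstacle — the whole argument reduces to the \(\1\)-eigenvector identity for \((K^\infty)^{-1}\) plus order counting. The only subtlety worth checking is the bookkeeping of \(m\) alongside \(\eps\): one should confirm that the terms discarded into \(\bigO{\eps^2}\) are genuinely lower order than the \(\bigO{\eps}\) leading term in the regime at hand (in particular, with \(m\) fixed as throughout \cref{sec:gen_rate}), consistent with the \(\bigO{\cdot}\) conventions fixed in the Notation paragraph, rather than hiding a factor that grows with \(m\).
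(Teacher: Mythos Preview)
Your approach is essentially the paper's: Neumann expansion plus the eigen-identity \((K^\infty)^{-1}\1=(\sigma_\xi^2+m\sigma_f^2)^{-1}\1\). You expand \(K^{-1}\) directly in powers of \(E\), whereas the paper uses the rearranged form \(3Q-3QKQ+QKQKQ\) from \cref{lem:Kinv} and then substitutes \(K=Q^{-1}+E\), producing cancellations that collapse to exactly your zeroth- and first-order terms; your route is simply shorter. The one place the paper does more than you is precisely the subtlety you flag at the end: to show that the first-order remainder \(-(\sigma_\xi^2+m\sigma_f^2)^{-1}\bm{\eps}^TQE\1\) is \(\bigO{\eps^2}\) \emph{without} a hidden factor of \(m\), it invokes monotonicity of \(Q\) together with the triangle-inequality bound \(\norm{E}_1\le m(\eps^\bullet_{\max}+\avg{\eps_\bullet}+2\sqrt{\eps^\bullet_{\max}\avg{\eps_\bullet}})\) (\cref{lem:E1norm}), which makes the \(m\) from \(\norm{E}_1\) cancel against the \(m^{-1}\) in the prefactor. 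That is the missing ingredient in your remainder estimate; otherwise the proposal is correct.
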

\begin{proof}
        Using \cref{lem:Kinv} and that \(Q^{-1}=K^\infty\) so \(Q\1 = \frac{1}{\sigma_\xi^2}\left(I - \frac{\sigma_f^2}{\sigma_\xi^2+m\sigma_f^2}\1\1\right) \bm{1} = \frac{\1}{\sigma_\xi^2+m\sigma_f^2}\) we have, neglecting the term \(T\) which will turn out to be negligible,
\begin{align}
    \bm{\eps}^TK^{-1}\1 &= 3\bm{\eps}^TQ\1 - 3\bm{\eps}^TQKQ\1 + \bm{\eps}^TQKQKQ\1 -\bm{\eps}^TT\1 \nonumber\\
    &= \frac{3m\avg{\eps_\bullet}}{\sigma_\xi^2+m\sigma_f^2} - \frac{3\bm{\eps}^TQK\1}{\sigma_\xi^2+m\sigma_f^2} + \frac{\bm{\eps}^TQKQK\1}{\sigma_\xi^2+m\sigma_f^2}. \label{eq:epsKinv1_0}
    \end{align}
    Next, recalling that \(E=K-K_\infty = K - Q^{-1}\) :
    \begin{align}
        \bm{\eps}^TQK\1 &= \bm{\eps}^T\1 + \bm{\eps}^TQE\1 \nonumber\\
        \bm{\eps}^TQKQK\1 &= \bm{\eps}^TQ(Q^{-1}+E)Q(Q^{-1}+E)\1 \nonumber\\
        &= \bm{\eps}^T(I+QE)(I+QE)\1 \nonumber\\
        &= \bm{\eps}^T\1 + 2\bm{\eps}^TQE\1 + \bm{\eps}^TQEQE\1 \label{eq:quad_E_term}.
    \end{align}
    
Now observe that \(-E\geq 0\) so that  \(-E\1=\bm{e}\geq 0\) (where both of these inequalities apply elementwise) . Also observe that \(Q\) is monotone, as defined in \cref{def:monotone} below, so that  by \cref{lem:mix_quad_prod_monotone} \(\bm{\eps}^TQ\bm{e}\geq 0\). Let \(\bm{q}=Q\bm{\eps}\). Then \(0
\leq\bm{q}^T\bm{e}=\sum_iq_ie_i \leq \sum_iq_ie_{\max}\) where \(e_{\max}=\norm{E}_1\). We use this fact immediately below:

\eq{
    \frac{-1}{\sigma_\xi^2 + m\sigma_f^2}\bm{\eps}^TQE\1 &\leq \frac{-1}{m\sigma_f^2}\bm{\eps}^TQE\1 \\
     &\leq \frac{1}{m\sigma_f^2}\bm{\eps}^TQ\1\norm{E}_1 \\
    &\leq \frac{1}{\sigma_f^4}\avg{
    \eps_\bullet}(3\eps_{\max}^\bullet + \avg{\eps_\bullet})(1+\sigma_\xi^2/m\sigma_f^2) \\
    &=\bigO{\eps^2}
}

where we have used 
\cref{lem:E1norm} in the penultimate line. We can apply the preceding argument iteratively to the third term in \eqref{eq:quad_E_term} to obtain a factor of \(m^{-1}\) and \(\norm{E}_1\) for each \(QE\) term, leading to the conclusion that the term is \(\bigO{\eps^3}\) in magnitude. 

Returning to \eqref{eq:epsKinv1_0} and noting that in \eqref{eq:quad_E_term} the term \(\bm{\epsilon}^T \1\) is equal to \(m\avg{\eps_\bullet}\), we have 
\eq{
    \bm{\eps}^TK^{-1}\1 &= \frac{1}{\sigma_\xi^2+m\sigma_f^2}(3m\avg{\eps_\bullet} -3m\avg{\eps_\bullet} - 3\bm{\eps}^TQE\1 + m\avg{\eps_\bullet} + 2\bm{\eps}^TQE\1 + \bigO{\eps^3})\nonumber\\
    &= \frac{m\avg{\eps_\bullet}}{\sigma_\xi^2 +m\sigma_f^2} + \bigO{\eps^2} \nonumber\\
    &= \frac{\avg{\eps_\bullet}}{\sigma_f^2}(1+\sigma_\xi^2/m\sigma_f^2)^{-1} + \bigO{\eps^2}.
}
\end{proof}

\begin{lemma}
\label{lem:E1norm}
    \(\norm{E}_1 \leq m(\eps_{\max}^\bullet + \avg{\eps_\bullet} + 2\sqrt{\eps_{\max}^\bullet}\sqrt{\avg{\eps_\bullet}})\).
\end{lemma}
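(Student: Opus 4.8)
The plan is to recognise that $E = K_N - K^\infty$ is nothing but the (negated) off-diagonal part of the matrix of pairwise kernel-induced squared distances, and then to bound each such distance through the test point via the triangle inequality, thereby reducing the column sums of $E$ to the quantities $\eps_i = \rho^2(\xrv_{(i)},\xv^*)$ appearing on the right-hand side.

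First I would compute the entries of $E$ explicitly: with $K^\infty = \sigma_\xi^2 I + \sigma_f^2\1\1^T$ and $[K_N]_{ij} = \sigma_f^2 - \eps_{ij} + \sigma_\xi^2\delta_{ij}$, subtraction gives $E_{ij} = -\eps_{ij}$ for every $i,j$, the diagonal vanishing because $\eps_{ii} = \rho^2(\xrv_{(i)},\xrv_{(i)}) = 0$. Since the $\eps_{ij}$ are nonnegative this gives, as in \cref{lem:Kinv}, $\norm{E}_1 = \max_i\sum_{j=1}^m\eps_{ij}$.

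Next, since $\rho$ is a genuine metric --- being a scaled Euclidean distance in the kernel feature space, cf.\ \cite{Scholkopf2001} --- the triangle inequality routed through $\xv^*$ yields $\rho(\xrv_{(i)},\xrv_{(j)}) \le \rho(\xrv_{(i)},\xv^*) + \rho(\xrv_{(j)},\xv^*)$, i.e.\ $\sqrt{\eps_{ij}} \le \sqrt{\eps_i} + \sqrt{\eps_j}$, and squaring gives $\eps_{ij} \le \eps_i + \eps_j + 2\sqrt{\eps_i}\sqrt{\eps_j}$. Summing over $j$ for fixed $i$ and using $\sum_j\eps_j = m\avg{\eps_\bullet}$, $\eps_i\le\eps_{\max}^\bullet$, and $\sum_j\sqrt{\eps_j} \le m\sqrt{\avg{\eps_\bullet}}$ (Cauchy--Schwarz, equivalently concavity of $\sqrt{\cdot}$) gives
\[
  \sum_{j=1}^m \eps_{ij} \;\le\; m\,\eps_{\max}^\bullet + m\avg{\eps_\bullet} + 2m\sqrt{\eps_{\max}^\bullet}\sqrt{\avg{\eps_\bullet}}
\]
uniformly in $i$, and taking the maximum over $i$ gives the claim.

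There is no real obstacle: the only steps worth spelling out are that $\rho$ obeys the triangle inequality (immediate from its feature-space representation) and the elementary bound $\sum_j\sqrt{\eps_j}\le m\sqrt{\avg{\eps_\bullet}}$. One could instead avoid $\rho$ and bound $\eps_{ij}$ directly through \ref{ass:kernel_dist} and the Euclidean triangle inequality, but keeping everything in the $\eps$-variables is cleaner and matches the way the bound is used in \cref{lem:epsKinv1}.
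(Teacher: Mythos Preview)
Your proof is correct and follows essentially the same route as the paper: identify $\norm{E}_1=\max_i\sum_j\eps_{ij}$, apply the triangle inequality for $\rho$ through $\xv^*$ to get $\eps_{ij}\le\eps_i+\eps_j+2\sqrt{\eps_i\eps_j}$, then sum over $j$ using $\eps_i\le\eps_{\max}^\bullet$ and $\sum_j\sqrt{\eps_j}\le m\sqrt{\avg{\eps_\bullet}}$. The only cosmetic difference is that you spell out the computation of $E_{ij}=-\eps_{ij}$ and name the concavity/Cauchy--Schwarz step explicitly, whereas the paper leaves these implicit.
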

\begin{proof}
We use the definition of the 1-norm in conjunction with the triangle inequality. Note that \(\eps_{ij}=\rho^2(\xrv_{(i)}(\xv),\xrv_{(j)}(\xv))\) and the triangle inequality implies \(\rho(\xrv_{(i)},\xrv_{(j)}) \leq \rho(\xrv_{(i)},\xv) + \rho(\xrv_{(j)},\xv)\). Hence, \(\eps_{ij} \leq \eps_i + \eps_j + 2\sqrt{\eps_i\eps_j}\). Then,
        \eq{
        \norm{E}_1 &= \max_i \sum_j \eps_{ij} \\
        &\leq \max_i \sum_j (\eps_i + \eps_j + 2\sqrt{\eps_i\eps_j}) \\
        &= m(\eps_{\max}^\bullet + \avg{\eps_\bullet}) + 2\sqrt{\eps_{\max}^\bullet}\sum_j\sqrt{\eps_j} \\
        &\leq m(\eps_{\max}^\bullet + \avg{\eps_\bullet}) + 2m\sqrt{\eps_{\max}^\bullet}\sqrt{\avg{\eps_\bullet}}.
    }
\end{proof}
\subsubsection{Proof of \texorpdfstring{\cref{lem:mse_conv_var}}{}}
\label{proof:mse_conv_var}
The proof follows by combining the upper bound \cref{lem:MSE_UB} with its expectation \cref{thm:mse_conv} and know that the best possible result is given by \(\sigma_\xi^2(1+m^{-1})\) (\cref{lem:min_mse}) and combine these effective upper and lower bounds by using the standard expression for variance.
    \begin{align*}
        \Var{f_n^{\MSE}(\bm{\theta})} &= \E{f_n^{\MSE}(\bm{\theta})^2} - \E{f_n^{\MSE}(\bm{\theta})}^2  \\
        &\leq \E{\left[\sigma_\xi^2 + \sigma_f^2 - \frac{\sigma_f^2 - 2\avg{\eps_{\bullet}}}{1+\sigma_\xi^2/m\sigma_f^2}\right]^2} - \sigma_\xi^4(1+m^{-1})^2 \\
        &\leq \E{\left[\sigma_\xi^2(1+m^{-1})+2\avg{\eps_{\bullet}}\right]^2} - \sigma_\xi^4(1+m^{-1})^2 \\
        &= 4\sigma_\xi^2(1+m^{-1})\E{\avg{\eps_{\bullet}}} + \bigO{\eps^2} \\
        &\leq 4\sigma_\xi^2(1 + m^{-1})C_{\bm{\theta},d}\left(\frac{n}{m}\right)^{-\frac{p}{d}}+ R_{mn}.
    \end{align*}
\hfill\(\square\)

\subsubsection{Proof of \texorpdfstring{\cref{lem:prob_mse_bound}}{}}
\label{proof:prob_mse_bound}
    We use Cantelli's inequality, \(\Pr{X-\mu\geq \lambda}\leq \frac{\sigma^2}{\sigma^2+\lambda^2}\) and note that in our case we only have results for upper and lower bounds on \(\mu\) and \(\sigma^2\) that we denote with bars and underlines respectively. 
    Let \(\mu_\Delta=\bar{\mu}-\underline{\mu}\). Hence, the quantity of interest can be bounded as
    \begin{align*}
        \Pr{X-\bar{\mu}\geq \lambda - \mu_\Delta} &= \Pr{X-\mu\geq \lambda - (\mu-\underline{\mu})} \\
        &\leq \Pr{X-\mu\geq \lambda - \mu_\Delta} \\
        &\leq \frac{\bar{\sigma}^2}{\underline{\sigma}^2 + [\lambda - (\bar{\mu}-\underline{\mu})]^2}
    \end{align*}
    where the last line follows from Cantelli's inequality. 
    
    Now in our case we refer to \cref{lem:mse_conv_var} for \(\bar{\sigma}^2\) and note that the lower bound is 0. For \(\bar{\mu}\) we use \cref{lem:MSE_UB} and for \(\underline{\mu}\) we use the minimum value \(\sigma_\xi^2\). Finally, choose \(\lambda=\varepsilon\). Hence,
    \begin{align}
    \label{eq:prob_mse_bound}
        \Pr{f_n^{\MSE}(\bm{\theta}) \geq \varepsilon + \underline{\mu}} &\leq \frac{\bar{\sigma}^2}{[\varepsilon - (\bar{\mu}-\underline{\mu})]^2}.
    \end{align}

    The RHS of \eqref{eq:prob_mse_bound} is given by 
    
    \eq{
        &\frac{4\sigma_\xi^2(1+m^{-1})C_{\bm{\theta},d}(n/m)^{-p/d} + R_{mn}}{(\varepsilon-2C_{\bm{\theta},d}(n/m)^{-p/d} -R_{mn})^2},
    }
        which we then bound from above by a constant \(\delta>0\). Let \(u=2C_{\bm{\theta},d}(n/m)^{-p/d}<1\) and \(A=2\sigma_\xi^2(1+m^{-1})\) then we have that \(\frac{Au}{(\varepsilon-u )^2}<\delta\) (neglecting terms in \(R_{mn}\)) and thus \(u^2-[2(\varepsilon)+A/\delta]u + \varepsilon^2 >0\). Solving the quadratic to first order we obtain \(u_1=-\varepsilon < 0\) and \(u_2=3\varepsilon+A/\delta-\bigO{\varepsilon^2}\), and noting that \(0<u<1\), we use the positive solution \(u_2\) to get that \(u<u_2\).
    
    Substituting the original values and rearranging gives 
    \eq{
        n &> m\left[\frac{3\varepsilon}{2C_{\bm{\theta},d}}+\frac{\sigma_\xi^2(1+m^{-1})}{C_{\bm{\theta},d}\delta}\right]^{-\frac{d}{p}} + \bigO{\varepsilon^2}.\\
    }
    \hfill\(\square\)

\section{Convergence rates in the misspecified case}

Before deriving convergence rates in the misspecifed case we reproduce the expansion given in Lemma 16 A.2.1 of \cite{GPnn} of the MSE,
\begin{align*}
    &\Ey{\Eystar{(y^* - \hat{\mu}_N^*)^2 \given \yrv}} \\
    &= \Ey{\Eystar{{y^*}^2 - 2y^*\hat{\mu}_N^* + (\hat{\mu}_N^*)^2 \given \yrv}} \\
    &= \Ey{\predvar_N + \mu^{*\,2}_N - 2\mu^*_N\hat{\mu}_N^* + (\hat{\mu}_N^*)^2} \\
    &= \underbrace{\predvar_N}_{(a)} + \underbrace{\kstar_N^TK_N^{-1}{\kstar_N}}_{(b)} - 2\underbrace{\kstar_N^T\hat{K}_N^{-1}\kstar[\hat]_N}_{(c)} + \underbrace{\kstar[\hat]_N^T\hat{K}_N^{-1}K_N\hat{K}_N^{-1}\kstar[\hat]_N}_{(d)}.
    \end{align*}
so we have that \(f_n^{\MSE}(\thv) = (a)+(b)-2(c)+(d)\). This is used several times in the sections that follow, although hereafter we remove the \(N\) subscript and \(*\) superscript. This is primarily to avoid overcrowding the notation, but also because the results given could be applied more generally provided the \(\eps\) terms are sufficiently small for relevant expansions to converge.

\subsection{Convergence Rates for Misspecification Type (a) or (b)}
Recall that \(f_n^{\MSE}(\thv) = (a)+(b)-2(c)+(d)\) and also that \((a)+(b)=\sigma_f^2+\sigma_\xi^2\).
Now we can analyse each of these terms using the expansion and the corresponding matrix computations but first some intermediate results are required.

\begin{lemma}
\label{lem:1TK21}
For a symmetric positive-definite \(m\times m\) real matrix \(K\) with constant diagonal entries and positive off-diagonal terms given by \(\sigma_f^2-\eps_{ij}\) for ``small'' \(\eps_{ij}>0\),
    \begin{equation*}
        \1^TK^2\1 = m^{-1}(\1^TK\1)^2 \pm \bigO{\eps^2}.
    \end{equation*}
\end{lemma}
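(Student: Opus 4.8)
The plan is to reduce the statement to an exact algebraic identity by exploiting that $\1^T K^2\1 = (K\1)^T(K\1)$ together with the hypothesis that $K$ has a \emph{constant} diagonal, which forces $K\1$ to be a constant vector perturbed only at order $\eps$. Writing the common diagonal value as $a$, the $i$-th entry of $K\1$ is
\[
(K\1)_i \;=\; a + \sum_{j\neq i}(\sigma_f^2-\eps_{ij}) \;=\; \underbrace{\bigl(a+(m-1)\sigma_f^2\bigr)}_{=:s} \;-\; e_i,\qquad e_i:=\sum_{j\neq i}\eps_{ij}=\bigO{\eps},
\]
so $K\1 = s\1 - \bm{e}$ with $\bm{e}=(e_1,\dots,e_m)^T$ and $s$ independent of $i$.

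Next I would expand and compare the two sides directly. On one hand, $\1^TK^2\1 = (K\1)^T(K\1) = \sum_i (s-e_i)^2 = ms^2 - 2s\sum_i e_i + \sum_i e_i^2$. On the other hand, $\1^TK\1 = ms - \sum_i e_i$, hence $m^{-1}(\1^TK\1)^2 = ms^2 - 2s\sum_i e_i + m^{-1}\bigl(\sum_i e_i\bigr)^2$. Subtracting, the $\eps^0$ term $ms^2$ and the $\eps^1$ term $-2s\sum_i e_i$ cancel identically, leaving
\[
\1^TK^2\1 - m^{-1}(\1^TK\1)^2 \;=\; \sum_i e_i^2 - m^{-1}\Bigl(\sum_i e_i\Bigr)^2,
\]
which is $\bigO{\eps^2}$ since each $e_i=\bigO{\eps}$ (with $m$ fixed); Cauchy--Schwarz moreover shows this remainder is nonnegative, so the stated $\pm$ is in fact one-sided.

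There is no substantive obstacle here: the result is an exact identity followed by an order-counting argument, and the only care needed is the bookkeeping of which terms are genuinely second order in $\eps$. It is worth remarking in the write-up that neither positivity of the off-diagonal entries nor positive-definiteness of $K$ is actually used — only the constancy of the diagonal — so the hypotheses could be weakened. One could alternatively obtain the same conclusion from the Neumann-type expansion of \cref{lem:Kinv} applied to $K=K^\infty-E$, but the direct computation above is shorter and avoids any convergence caveats.
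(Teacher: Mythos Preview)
Your argument is correct and in fact tidier than the paper's. The paper proves the lemma by brute-force expansion: it writes $\1^TK\1$, $(\1^TK\1)^2$ and $\1^TK^2\1$ out term-by-term (taking the diagonal to be $\sigma_f^2+\sigma_\xi^2$), lists all the resulting monomials in $\sigma_f^2,\sigma_\xi^2,\eps_{ij}$, and then compares. You instead exploit the structural observation $K\1=s\1-\bm{e}$ together with $\1^TK^2\1=\|K\1\|^2$, which collapses the bookkeeping to a two-line subtraction and makes the cancellation of the $\eps^0$ and $\eps^1$ terms transparent. Your route has two small dividends the paper's does not: it identifies the exact remainder $\sum_i e_i^2 - m^{-1}(\sum_i e_i)^2\geq 0$ (so the $\pm$ is really a $+$), and it shows that neither positive-definiteness nor positivity of the off-diagonals is used --- only constancy of the diagonal. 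The paper's expansion, on the other hand, is closer in spirit to the mixed-product calculation $\1^T\hat{K}K\1$ that follows it, which may be why that presentation was chosen.
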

\begin{proof}
    For diagonal terms given by \(\sigma_f^2+\sigma_\xi^2\):
    \eq{
    \1^TK\1 &= n^2\sigma_f^2 + n\sigma_\xi^2 - \sum_{i\neq j}\eps_{ij}, \\
    (\1^TK\1)^2 &= n^4\sigma_f^4 + n^2\sigma_\xi^4 + \sum_{i\neq j}\eps_{ij}\sum_{k\neq l}\eps_{kl} + 2n^3\sigma_f^2\sigma_\xi^2 - 2n\sigma_\xi^2\sum_{i\neq j}\eps_{ij} - 2n^2\sigma_f^2\sum_{i\neq j}\eps_{ij}, \\
    \1^TK^2\1 &= n^3\sigma_f^4 + 2n^2\sigma_f^2\sigma_\xi^2 + n\sigma_\xi^4 - 2n^2\avg{\eps_{\bullet\bullet}} -2n\sigma_\xi^2\avg{\eps_{\bullet\bullet}} \pm\bigO{\eps^2}.
    }
    Using that \(\1^T\hat{K}K\1=\1^TK\hat{K}\1\)
and
\eq{
    \sum_{ijk} \hat{K}_{ij}K_{jk} &= \sum_{ijk}(\hat{\sigma}_f^2+\delta_{ij}\hat{\sigma}_\xi^2-(1-\delta_{ij}\hat{\eps}_{ij})) (\sigma_f^2+\delta_{jk}\sigma_\xi^2-(1-\delta_{jk}\eps_{jk})) \\
    &= n^3\hat{\sigma}_f^2\sigma_f^2 + \hat{\sigma}_f^2\sigma_\xi^2\sum_{ijk}\delta_{jk} d- \hat{\sigma}_f^2\sum_{ijk}(1-\delta_{jk})\eps_{jk} + \sigma_f^2\hat{\sigma}_\xi^2\sum_{ijk}\delta_{ij} \\
    &\quad- \sigma_f^2\sum_{ijk}(1-\delta_{ij})\hat{\eps}_{ij}+\sum_{ijk}\delta_{ij}\delta_{jk}\sigma_\xi^2\hat{\sigma}_\xi^2-\sum_{ijk}\delta_{ij}(1-\delta_{jk})\hat{\sigma}_\xi^2\eps_{jk} \\
    &\quad- \sum_{ijk}\delta_{jk}(1-\delta_{ij})\sigma_\xi^2\hat{\eps}_{ij}+ \sum_{ijk}(1-\delta_{ij})(1-\delta_{jk})\eps_{jk}\hat{\eps}_{ij} \\
    &= n^3\sigma_f^2\hat{\sigma}_f^2 + n^2(\sigma_\xi^2\hat{\sigma}_f^2+\hat{\sigma}_\xi^2\sigma_f^2) + n\sigma_\xi^2\hat{\sigma}_\xi^2 - n^2\hat{\sigma}_f^2\avg{\eps_{\bullet\bullet}} -n^2\sigma_f^2\avg{\hat{\eps}_{\bullet\bullet}}\\
    &\quad -n\hat{\sigma}_\xi^2\avg{\eps_{\bullet\bullet}} -n\sigma_\xi^2\avg{\hat{\eps}_{\bullet\bullet}} \pm\bigO{\eps^2}.
}
\end{proof}

We can use these results, in combination with those derived in \cref{appx:rates}, to provide an upper bound for the predictive MSE under model misspecification. In this way we obtain \cref{thm:mse_conv_mis}, having first proved \cref{lem:misspec_MSE_UB}:

\subsubsection{Proof of \texorpdfstring{\cref{lem:misspec_MSE_UB}}{}}
\label{proof:misspec_MSE_UB}
Using the decomposition given in \cite{GPnn} we then use the results just derived (\ref{lem:1TKinv1}) plus an additional computation given below.

Using \cref{lem:Kinv} but retaining only first order terms in \(\hat{E}\) from the Neumann expansion due to the fact that this term is quadratic, we have (defining \(\hat{Q}\1=\hat{q}\1\) with \(\hat{q}=\hat{\sigma}_\xi^{-2}(1-m\hat{\gamma})\))
\eq{
     \1^T\hat{K}^{-2}\1 &= \1^T\hat{Q}[I-\hat{E}\hat{Q}]\hat{Q}[I-\hat{E}\hat{Q}]\1 + \bigO{\eps^2} \\
     &= \hat{q}\1^T[\hat{Q}-\hat{E}\hat{Q}^2-\hat{Q}\hat{E}\hat{Q}]\1 + \bigO{\eps^2} \\
     &= \hat{q}^2(m-2\hat{q}
     \1^T\hat{E}\1) + \bigO{\eps^2} \\
     &= \hat{\sigma}_\xi^{-4}(m-2\hat{\sigma}_\xi^{-2}m(m-1)\avg{\hat{\eps}_{\bullet\bullet}}) + \bigO{\eps^2} \\
     &= \frac{1}{m\hat{\sigma}_f^4} - \frac{2\avg{\hat{\eps}_{\bullet\bullet}}}{m\hat{\sigma}_f^4} + \bigO{\eps^2+m^{-2}\eps + m^{-2}} \\
     &\leq \frac{1}{m\hat{\sigma}_f^4}.
}


\paragraph{(c)}
Using \cref{lem:1TKinv1} and \cref{lem:epsKinv1}:
\eq{
        \bm{k}^T\hat{K}^{-1}\hat{\bm{k}} &= (\sigma_f^2\1-\bm{\eps})^T\hat{K}^{-1}(\hat{\sigma}_f^2\1-\hat{\bm{\eps}}) \\
        &= \sigma_f^2\hat{\sigma}_f^2\1^T\hat{K}^{-1}\1 - \sigma_f^2\hat{\bm{\eps}}^T\hat{K}^{-1}\1 - \hat{\sigma}_f^2\bm{\eps}^T\hat{K}^{-1}\1 + \bm{\eps}^T\hat{K}^{-1}\hat{\bm{\eps}} \\
        &= \sigma_f^2 + \frac{\sigma_f^2}{\hat{\sigma}_f^2}(\avg{\hat{\eps}_{\bullet\bullet}} - m^{-1}\hat{\sigma}_\xi^2) - \frac{\frac{\sigma_f^2}{\hat{\sigma}_f^2}\avg{\hat{\eps}_\bullet} + \avg{\eps_\bullet}}{1+\hat{\sigma}_\xi^2/m\hat{\sigma}_f^2} \pm \bigO{\eps^2+m^{-2}}.
}

\paragraph{(d)}

Since \(K_{jk}=\sigma_f^2-\eps_{jk}+\delta_{jk}\sigma_\xi^2\) where \(\eps_{jj}=0\), we have that
\eq{
    \1^T\hat{K}^{-1}K\hat{K}^{-1}\1 &= \sum_{ijkl}(\hat{K}^{-1})_{ij}K_{jk}(\hat{K}^{-1})_{kl} \\
    &= \sigma_f^2\sum_{ijkl}(\hat{K}^{-1})_{ij}(\hat{K}^{-1})_{kl} + \sigma_\xi^2\sum_{ijk}(\hat{K}^{-1})_{ij}(\hat{K}^{-1})_{jk} - \sum_{ijkl}(\hat{K}^{-1})_{ij}(\hat{K}^{-1})_{kl}\eps_{jk} \\
    &= \sigma_f^2(\1^T\hat{K}^{-1}\1)^2 + \sigma_\xi^2\1^T\hat{K}^{-2}\1+ \1^T\hat{K}^{-1}E\hat{K}^{-1}\1
}

Observe that \(\1^T\hat{K}^{-1}E\hat{K}^{-1}\1\leq 0\) since \(\hat{K}^{-1}\) is monotone (\cref{def:monotone}) so with \(\bm{u}=\hat{K}^{-1}\1\geq\0\), \(\bm{u}^TE\bm{u}=-\sum_{ij}u_iu_j\eps_{ij}\leq 0\).

Now subbing in our intermediates:
\eq{
    \1^T\hat{K}^{-1}K\hat{K}^{-1}\1 &= \sigma_f^2\hat{\sigma}_f^{-4}\left(1+\hat{\sigma}_f^{-2}(\avg{\hat{\eps}_{\bullet\bullet}} - \frac{\hat{\sigma}_\xi^2}{m})\right)^2 + \sigma_\xi^2\frac{1}{m\hat{\sigma}_f^4} + \1^T\hat{K}^{-1}E\hat{K}^{-1}\1 \\
    &\leq \hat{\sigma}_f^{-4}\left(\sigma_f^2 + \frac{\sigma_\xi^2}{m} + \frac{2\sigma_f^2}{\hat{\sigma}_f^2}(\avg{\hat{\eps}_{\bullet\bullet}}-\frac{\hat{\sigma}_\xi^2}{m})\right) + \bigO{\eps^2+m^{-2}}
}

and 
\eq{
    \hat{\bm{k}}^T\hat{K}^{-1}K\hat{K}^{-1}\hat{\bm{k}} &= (\hat{\sigma}_f^2\1-\hat{\bm{\eps}})^T\hat{K}^{-1}K\hat{K}^{-1}(\hat{\sigma}_f^2\1-\hat{\bm{\eps}}) \\
    &= \hat{\sigma}_f^4\big(\sigma_f^2(\1^T\hat{K}^{-1}\1)^2 + \sigma_\xi^2\1^T\hat{K}^{-2}\1 + \1^T\hat{K}^{-1}E\hat{K}^{-1}\1\big) \\
    &- 2\hat{\bm{\eps}}^T\big(\sigma_f^2\hat{K}^{-1}\1\1^T\hat{K}^{-1}\1 + \sigma_\xi^2\hat{K}^{-2}\1 + \hat{K}^{-1}E\hat{K}^{-1}\1\big) + \bigO{\eps^2}
}
so,
\eq{
    \hat{\bm{k}}^T\hat{K}^{-1}K\hat{K}^{-1}\hat{\bm{k}} &\leq (\hat{\sigma}_f^2-\hat{\eps}^{\bullet}_{\min})^2\1^T\hat{K}^{-1}K\hat{K}^{-1}\1 \\
    &\leq \sigma_f^2 + \frac{\sigma_\xi^2}{m} + \frac{2\sigma_f^2}{\hat{\sigma}_f^2}\big(\avg{\hat{\eps}_{\bullet\bullet}}-\frac{\hat{\sigma}_\xi^2}{m}\big) - \frac{2\hat{\eps}^{\bullet}_{\min}}{\hat{\sigma}_f^2}\bigg[\sigma_f^2 + \frac{\sigma_\xi^2}{m}+ \frac{2\sigma_f^2}{\hat{\sigma}_f^2}\bigg(\avg{\hat{\eps}_{\bullet\bullet}} -\frac{\hat{\sigma}_\xi^2}{m}\bigg)\bigg]
}
and finally,
\eq{
    (a)+(b)-2&(c)+(d) \\
    &\leq \sigma_f^2 + \sigma_\xi^2 - 2\sigma_f^2\left(1+\frac{\avg{\hat{\eps}_{\bullet\bullet}} - m^{-1}\hat{\sigma}_\xi^2}{\hat{\sigma}_f^2}\right) + 2\frac{\sigma_f^2}{\hat{\sigma}_f^2}\avg{\hat{\eps}_{\bullet}} + 2\avg{\eps_{\bullet}} + \sigma_f^2 \\
    &+ 2\frac{\sigma_f^2}{\hat{\sigma}_f^2}(\avg{\hat{\eps}_{\bullet\bullet}}-m^{-1}\hat{\sigma}_\xi^2) + \frac{\sigma_\xi^2}{m} - 2\hat{\eps}^{\bullet}_{\min}\frac{\sigma_f^2}{\hat{\sigma}_f^2} + \bigO{\eps^2+m^{-2}} \\
    &= \sigma_\xi^2(1+m^{-1}) + 2\frac{\sigma_f^2}{\hat{\sigma}_f^2}(\avg{\hat{\eps}_{\bullet}}-\hat{\eps}^{\bullet}_{\min})+ 2\avg{\eps_{\bullet}} \pm \bigO{\eps^2+m^{-2}}.
}
\hfill\(\square\)

\subsubsection{Proof of \texorpdfstring{\cref{thm:mse_conv_mis}}{}}
\label{proof:mse_conv_mis}
The result follows from \cref{lem:misspec_MSE_UB} and observing that \(\hat{\eps}^{\bullet}_{\min}>0\) so that \(f_n^{\MSE}(\thv)\leq\sigma_\xi^2(1+m^{-1})+2\frac{\sigma_f^2}{\hat{\sigma}_f^2}\avg{\hat{\eps}_{\bullet}} + 2\avg{\eps_{\bullet}}\). We then apply \cref{lem:eps_i} as per \cref{thm:mse_conv} to \(\avg{\eps_{\bullet}}\) and identically to \(\avg{\hat{\eps}_{\bullet}}\) but with \(\hat{p}\) and \(C_{\thv,d}\) replacing \(p\) and \(C_{\bm{\theta},d}\).
\hfill\(\square\)

\subsubsection{Proof of \texorpdfstring{\cref{lem:mse_conv_var_mis}}{}}
\label{proof:mse_conv_var_mis}
As with \cref{lem:mse_conv_var}, we have
    \begin{align*}
        \Var{f_n^{\MSE}(\thv)} &= \E{f_n^{\MSE}(\thv)^2} - \E{f_n^{\MSE}(\thv)}^2 \\
        & \leq \Eold\bigg[\sigma_\xi^2(1+m^{-1}) + 2\frac{\sigma_f^2}{\hat{\sigma}_f^2}\avg{\hat{\eps}_\bullet} + 2\avg{\eps_\bullet}  \pm \bigO{\eps^2+m^{-2}}\bigg]^2 - \sigma_\xi^4(1+m^{-1})^2 \\
        & = 2\sigma_\xi^2(1+m^{-1})\E{\frac{\sigma_f^2}{\hat{\sigma}_f^2}\avg{\hat{\eps}_\bullet}+\avg{\eps_\bullet}} + \bigO{\eps^2+m^{-2}} \\
         & \leq 2\tilde{C}^m_{\bm{\theta},d}\left(\frac{n}{m}\right)^{-\frac{p}{d}}+ 2\tilde{C}^m_{\thv,d}\left(\frac{n}{m}\right)^{-\frac{\hat{p}}{d}} + R_{mn},
    \end{align*}
    with \(\tilde{C}^m_{\thv,d}=2\sigma_\xi^2(1+m^{-1}) \frac{\sigma_f^2}{\hat{\sigma}_f^2} C_{\thv,d}\) and \(\tilde{C}^m_{\bm{\theta},d}\) as before.
    
\hfill\(\square\)

\begin{lemma}[Probabilistic MSE guarantees (under misspecification)]
\label{lem:prob_mse_bound_mis}
    Under the same conditions as \cref{thm:mse_conv_mis},
    for constants \(\varepsilon,\delta > 0\), 
    if \(\hat{p}\geq p\) and \(n\left(1+\bigO{n^{\frac{p-\hat{p}}{d}}}\right)^{-\frac{d}{p}}\gtrsim m\left[\frac{3\varepsilon}{2C_{\bm{\theta},d}}+\frac{\sigma_\xi^2(1+m^{-1})}{C_{\bm{\theta},d}\delta}\right]^{-\frac{d}{p}}\) then
    with probability at least \(1-\delta\),
    \[f_n^{\MSE}(\thv) < \sigma_\xi^2(1+m^{-1}) + \varepsilon.\]
    If \(\hat{p}\leq p\) we can simply interchange the roles of \(p\) and \(\hat{p}\) in the expressions.
\end{lemma}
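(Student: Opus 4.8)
The plan is to follow the proof of \cref{lem:prob_mse_bound} essentially line for line, substituting the misspecified mean and variance bounds for the well-specified ones and then absorbing the second, faster-decaying error term into a multiplicative correction on the dominant one. Nothing new is needed beyond ingredients already in hand: \cref{thm:mse_conv_mis} supplies an upper bound \(\bar{\mu}\) on \(\Ex{f_n^{\MSE}(\thv)}\); \cref{lem:min_mse} supplies the (trivial) lower bound \(\underline{\mu} = \sigma_\xi^2(1+m^{-1})\); \cref{lem:mse_conv_var_mis} supplies an upper bound \(\bar{\sigma}^2\) on \(\Var{f_n^{\MSE}(\thv)}\), with lower bound \(0\); and we reuse the modified Cantelli inequality from \cref{proof:prob_mse_bound}.

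First I would write out that modified Cantelli bound with \(\mu_\Delta := \bar{\mu} - \underline{\mu}\) and \(\lambda = \varepsilon\),
\[
  \Pr{f_n^{\MSE}(\thv) \ge \varepsilon + \underline{\mu}} \;\le\; \frac{\bar{\sigma}^2}{(\varepsilon - \mu_\Delta)^2},
\]
and substitute
\[
  \mu_\Delta = 2C_{\bm{\theta},d}\left(\frac{n}{m}\right)^{-\frac{p}{d}} + 2\frac{\sigma_f^2}{\hat{\sigma}_f^2}C_{\thv,d}\left(\frac{n}{m}\right)^{-\frac{\hat{p}}{d}} + R_{mn},
  \qquad
  \bar{\sigma}^2 = 2\tilde{C}^m_{\bm{\theta},d}\left(\frac{n}{m}\right)^{-\frac{p}{d}} + 2\tilde{C}^m_{\thv,d}\left(\frac{n}{m}\right)^{-\frac{\hat{p}}{d}} + R_{mn}.
\]

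Taking \(\hat{p} \ge p\) without loss of generality, the \(\hat{p}\)-term decays faster, so I would factor out \(u := 2C_{\bm{\theta},d}(n/m)^{-p/d}\): using \((n/m)^{-\hat{p}/d} = (n/m)^{-p/d}(n/m)^{(p-\hat{p})/d}\) with \((n/m)^{(p-\hat{p})/d}\to 0\), both \(\mu_\Delta\) and \(\bar{\sigma}^2\) can be written, up to the negligible \(R_{mn}\), as \(u\) (respectively \(Au\), where \(A = 2\sigma_\xi^2(1+m^{-1})\) as in the matched case once the definitions of the \(\tilde{C}^m_{\cdot,d}\) are unpacked) times a factor \(1 + \bigO{n^{(p-\hat{p})/d}}\). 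Writing \(w := u(1 + \bigO{n^{(p-\hat{p})/d}})\), requiring the right-hand side of the Cantelli bound to be at most \(\delta\) then gives \(\frac{Aw}{(\varepsilon - w)^2} < \delta\) — precisely the quadratic inequality solved in \cref{proof:prob_mse_bound}, whose admissible region to first order in \(\varepsilon\) is \(w < 3\varepsilon + A/\delta - \bigO{\varepsilon^2}\).

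Substituting back \(w = 2C_{\bm{\theta},d}(n/m)^{-p/d}(1 + \bigO{n^{(p-\hat{p})/d}})\) and rearranging for \(n\), now dragging the extra factor \((1+\bigO{n^{(p-\hat{p})/d}})^{-d/p}\) through the power, yields the stated bound, with \(3\varepsilon/(2C_{\bm{\theta},d}) + \sigma_\xi^2(1+m^{-1})/(C_{\bm{\theta},d}\delta)\) appearing exactly as in \cref{lem:prob_mse_bound}; the case \(\hat{p} \le p\) follows by interchanging \((p, C_{\bm{\theta},d})\) with \((\hat{p}, C_{\thv,d})\). The only real obstacle is bookkeeping: the correction \(1 + \bigO{n^{(p-\hat{p})/d}}\) is itself \(n\)-dependent, so the lower bound on \(n\) is implicit (hence the \(\gtrsim\)), and one must verify that \(R_{mn} = \bigO{n^{-2p/d}}\) is genuinely of smaller order than \(u = \bigO{n^{-p/d}}\) so it may be dropped both before and after solving the quadratic — the same check already made in \cref{proof:prob_mse_bound}. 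No new estimate is required.
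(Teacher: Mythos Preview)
Your proposal is correct and follows essentially the same approach as the paper: apply the modified Cantelli inequality with the misspecified bounds from \cref{thm:mse_conv_mis} and \cref{lem:mse_conv_var_mis}, then factor the combined error as the dominant term \(2C_{\bm{\theta},d}(n/m)^{-p/d}\) times a correction \(1+\bigO{n^{(p-\hat{p})/d}}\) and reuse the quadratic solution from \cref{proof:prob_mse_bound}. The paper's proof is just this sketch, so there is nothing to add.
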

\begin{proof}
    The proof follows in identical fashion to \cref{lem:prob_mse_bound} but with the upper and lower bounds substituted into Cantelli's inequality taken from \cref{thm:mse_conv_mis} and \cref{lem:mse_conv_var_mis}. To obtain the LHS of the condition on \(n\) we use the same methods as in \cref{lem:prob_mse_bound} only now \(u=2\left[C_{\bm{\theta},d}\left(\frac{n}{m}\right)^{-\frac{p}{d}}+\frac{\sigma_f^2}{\hat{\sigma}_f^2}C_{\thv,d}\left(\frac{n}{m}\right)^{-\frac{\hat{p}}{d}}\right]\). If we now assume \(\hat{p}\geq p\) then equivalently \(u=2C_{\bm{\theta},d}\left(\frac{n}{m}\right)^{-\frac{p}{d}}\left[1+\frac{\sigma_f^2}{\hat{\sigma}_f^2}\frac{C_{\thv,d}}{C_{\bm{\theta},d}}\left(\frac{n}{m}\right)^{\frac{p-\hat{p}}{d}}\right]\).
\end{proof}

\subsection{Calibration Convergence for Misspecification Type (a)(iii) or (b)}
\subsubsection{Proof of \texorpdfstring{\cref{lem:cal_conv}}{}}
\label{proof:cal_conv}
    Since \(f_n^{\CAL}(\thv)=\frac{f_n^{\MSE}(\thv)}{\predvar[\hat]_N}\) we first observe that the lower bound for the numerator is \(\sigma_\xi^2(1+m^{-1})\) (\cref{lem:min_mse}). Now we bound the denominator from above by the following:
    \eq{
        \predvar[\hat]_N &= \hat{\sigma}_f^2 + \hat{\sigma}_\xi^2 - \hat{\kappa}_N \leq \hat{\sigma}_\xi^2 + \frac{\hat{\sigma}_\xi^2/m+2\avg{\hat{\eps}_\bullet}}{1+\hat{\sigma}_\xi^2/(m\hat{\sigma}_f^2)}\\
        &\leq \hat{\sigma}_\xi^2(1+m^{-1}) + 2\avg{\hat{\eps}_\bullet} 
    }
    using \cref{lem:kappa}. The result follows.

    A lower bound for \(\predvar[\hat]_N\) is obtained analogously to \cref{lem:kappa}:
    \eq{
        \hat{\kappa}_N & \leq (\hat{\sigma}_f^2-\hat{\eps}^{\bullet}_{\min})^2\hat{\sigma}_f^{-2}\left(1-\frac{\avg{\hat{\eps}_{\bullet\bullet}}-m^{-1}\hat{\sigma}_\xi^2}{\hat{\sigma}_f^2}\right) \pm\bigO{\eps^2+m^{-2}} \qquad\textrm{(using \cref{lem:1TKinv1})}\\
        &= \hat{\sigma}_f^2 -2\hat{\eps}^{\bullet}_{\min}-\avg{\hat{\eps}_{\bullet\bullet}} + m^{-1}\hat{\sigma}_\xi^2 \pm \bigO{\eps^2+m^{-2}}, \\
        \predvar[\hat]_N &\geq \hat{\sigma}_f^2 + \hat{\sigma}_\xi^2 - (\hat{\sigma}_f^2 -2\hat{\eps}^{\bullet}_{\min}-\avg{\hat{\eps}_{\bullet\bullet}} + m^{-1}\hat{\sigma}_\xi^2 \pm \bigO{\eps^2+m^{-2}}) \\
        &= \hat{\sigma}_\xi^2(1+m^{-1}) + 2\hat{\eps}^{\bullet}_{\min} + \avg{\hat{\eps}_{\bullet\bullet}}\mp \bigO{\eps^2+m^{-2}}.
    }
    
    We combine this lower bound with \cref{lem:misspec_MSE_UB} to obtain the calibration upper bound.
\hfill\(\square\)
\subsection{Additional Results}
\subsubsection{Proof of \texorpdfstring{\cref{lem:mse_limitl}}{}}
\label{proof:mse_limitl}
Using the same decomposition as in \cref{proof:misspec_MSE_UB} and applying the limiting behaviour of \(\hat{l}\) so that \(\hat{K}\rightarrow\hat{K}^\infty\) and \(\hat{\bm{k}}\rightarrow\hat{\sigma}_f^2\) we find:
\eq{
    (a) + (b) - &2(c) +(d) \\
    &= \sigma_f^2 + \sigma_\xi^2 -2\hat{\bm{k}}^T(\hat{K}^\infty)^{-1}\bm{k}  + \hat{\bm{k}}^T(\hat{K}^\infty)^{-1}K(\hat{K}^\infty)^{-1}\hat{\bm{k}} \\
    &= \sigma_f^2 + \sigma_\xi^2 -2 \frac{\hat{\sigma}_f^2}{\hat{\sigma}_\xi^2}(1-m\hat{\gamma})\1^T\bm{k} + \frac{\hat{\sigma}_f^4}{\hat{\sigma}_\xi^4}(1-m\hat{\gamma})^2\1^TK\1 \\
    &= \sigma_f^2 + \sigma_\xi^2 -2(\sigma_f^2 - \avg{\eps_\bullet}) + m^{-2}(m^2\sigma_f^2 + m\sigma_\xi^2 -\sum_{i\neq j}\eps_{ij}) \pm \bigO{m^{-2}} \\
    &= \sigma_\xi^2(1+m^{-1}) + 2\avg{\eps_\bullet} - \avg{\eps_{\bullet\bullet}} + \bigO{m^{-2}}
}
to which we apply \cref{lem:eps_i} to obtain the result.

\hfill\(\square\)

\subsubsection{High-Dimensions}
\label{appx:highd}
As \(d\) grows, the off-diagonal terms of the kernel matrix tend to constant values that match their expected values (for that \(d\)) and the fluctuations in these entries tends to 0. As these fluctuations tend to 0, the off-diagonal terms tend to constant values (that need not be 0 or \(\sigma_f^2\)) and behaviour tends to that of straight \(k\)-NN in a similar way to that derived in \cref{lem:mse_limitl}. This effect is magnified in the GPnn case as the nearest neighbour kernel entries tend to constant values as \(n\) grows too.

We can see this by considering the simple example of a Gaussian input measure and a kernel satisfying \ref{ass:kernel_dist}:
\begin{example}[Fluctuation example]
    \label{ex:fluctuations}
    Assume a Gaussian measure over the input space: \(\xrv \sim \mc{N}(0,\frac{1}{d}I_d)\) and a kernel satisfying \ref{ass:kernel_dist} for \(p=2\) and fixed \(l>0\). Then,
    \eq{
        \Pr{\abs{\rho^2(\Delta)-\E{\rho^2(\Delta)}}\geq \sigma_f^2\varepsilon} &\leq \frac{8L^2}{\varepsilon^2 dl^4} \xrightarrow{d\rightarrow\infty}0.
    }
\end{example}
\begin{proof}
    With the input distribution as assumed, we have that \(\norm{\xv-\xv'}^2\cdot\frac{d}{2}\sim\chi^2_d\). Hence, \(\Var{\Delta^2}=\frac{8}{d}\). Applying \ref{ass:kernel_dist} we have \(\Var{\rho^2(\Delta)/\sigma_f^2}\leq \frac{L^2}{l^4}\frac{8}{d}\). Finally, applying Chebyshev's inequality: \(\Pr{\abs{\rho^2(\Delta)/\sigma_f^2-\E{\rho^2(\Delta)/\sigma_f^2}}\geq\varepsilon}\leq\frac{\Var{\rho^2(\Delta)/\sigma_f^2}}{\varepsilon^2}\leq\frac{8L^2}{d\varepsilon^2l^4}\).
\end{proof}

This example can be generalised to the following lemma:
\begin{lemma}[Fluctuations with \(d\)]
    For a kernel satisfying \(\rho^2(\Delta)\leq L\left(\frac{\Delta}{l}\right)^p\) and an input distribution satisfying \(\Var{\Delta^p}=\bigO{d^\alpha}<\infty\) for \(\alpha \in \mathbb{R}\), then \(\rho^2(\Delta)\xrightarrow{d\rightarrow\infty}\E{\rho^2(\Delta)}\) in probability if \(d^\alpha/l(d)^{2p}\xrightarrow{d\rightarrow\infty}0\).
\end{lemma}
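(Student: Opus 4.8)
The plan is to generalise Example~\ref{ex:fluctuations} almost verbatim: dimension entered there only through the rate at which $\Var{\Delta^2}$ shrank, and the kernel entered only through \ref{ass:kernel_dist}; so replacing $\Delta^2$ by $\Delta^p$ and $8/d$ by $\bigO{d^\alpha}$, and allowing $l=l(d)$, the same Chebyshev argument should deliver the claim. First I would record that $\E{\Delta^p}<\infty$ (implied by $\Var{\Delta^p}<\infty$), so that $\E{\rho^2(\Delta)}\le\sigma_f^2(L/l^p)\E{\Delta^p}<\infty$ and all the quantities below are well defined.

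Concretely I would proceed in three steps. \emph{Step 1 (transfer the pointwise bound to a variance bound).} Since $c(\Delta)\le c(0)=1$ we have $0\le\rho^2(\Delta)/\sigma_f^2=1-c(\Delta)$, while \ref{ass:kernel_dist} gives $\rho^2(\Delta)/\sigma_f^2<(L/l(d)^p)\Delta^p$; exactly as in Example~\ref{ex:fluctuations} this yields
\[
\Var{\rho^2(\Delta)}\ \le\ \sigma_f^4\,\frac{L^2}{l(d)^{2p}}\,\Var{\Delta^p},
\]
i.e. the constant $L$ controlling the first-order bound also controls the fluctuations of $1-c$ about its mean as a function of $\Delta^p$. \emph{Step 2 (insert the dimensional rate).} By hypothesis $\Var{\Delta^p}=\bigO{d^\alpha}$, so $\Var{\rho^2(\Delta)}=\bigO{\sigma_f^4 L^2\,d^\alpha/l(d)^{2p}}$, which tends to $0$ precisely under the stated condition $d^\alpha/l(d)^{2p}\to 0$. \emph{Step 3 (Chebyshev).} For any $\varepsilon>0$,
\[
\Pr{\abs{\rho^2(\Delta)-\E{\rho^2(\Delta)}}\ge\varepsilon}\ \le\ \frac{\Var{\rho^2(\Delta)}}{\varepsilon^2}\ \xrightarrow{d\rightarrow\infty}\ 0,
\]
which is convergence in probability.

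The main obstacle — and the only genuinely non-mechanical point — is Step 1: \ref{ass:kernel_dist} as literally stated is a \emph{pointwise} inequality, and a pointwise sandwich $0\le Z\le cW$ does not by itself bound $\Var Z$ by $c^2\Var W$. What makes it legitimate for the kernels of interest is that the first-order bounds derived in \ref{appx:assumptions} (e.g. $\rho^2_{SE}/\sigma_f^2<\Delta^2/2l^2$, and the Mat\'ern/RQ analogues) come from a genuine Lipschitz estimate on $1-c$ regarded as a function of $\Delta^p$, with Lipschitz constant $L/l^p$ and value $0$ at $\Delta=0$; under that mild strengthening of \ref{ass:kernel_dist} the variance bound of Step 1 is immediate. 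I would either invoke this as the precise reading of \ref{ass:kernel_dist} already used in Example~\ref{ex:fluctuations}, or add a one-line hypothesis making the Lipschitz property explicit. Everything after that is just bookkeeping of the constants $\sigma_f^2$, $L$ and $l(d)$.
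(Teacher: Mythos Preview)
Your proposal is correct and follows essentially the same route as the paper, which simply states that the proof mirrors Example~\ref{ex:fluctuations} by applying Chebyshev's inequality directly to the stated assumptions. Your caveat about Step~1 is well taken: the paper makes the same pointwise-to-variance leap without comment, so the Lipschitz reading you suggest is a clarification of the paper's own argument rather than a departure from it.
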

\begin{proof}
    The proof follows the same logic as \cref{ex:fluctuations} by applying Chebyshev's inequality directly to the assumptions given.
\end{proof}

We believe this relationship between lengthscale and dimensionality in part explains the superior performance of GPnn (empirically) over \(k\)-NN over an interval of \(\hat{l}\) around \(l\); that in this region the GP can exploit the correlations between \(y\) values and interpolate effectively. When \(\hat{l}\) is too large, as previously discussed in \cref{lem:mse_limitl}, behaviour tends to that of \(k\)-NN, whilst for \(\hat{l}\) too small we lose the informative contribution of the training points (since as \(\hat{l}\rightarrow0\) \(c(\xv,\xv')\rightarrow0\) for \(\xv\neq\xv'\)).
\section{Further Information on Simulations and Experiments}
\label{appx:experiments}
Simulations were run as per Algorithm 1 \cite{GPnn} with 10 choices of \(n\) taken so that \(\log_{10}(n) \in [3,7]\), \((\sigma_f^2,\sigma_\xi^2)=(0.9,0.1)\), \(m=400\) and other parameters as shown in the relevant figures. Each simulation was repeated 5 times with different random seeds and the results averaged. Test size \(n^*\) was set at 1000. We use the L2 metric to obtain the distance matrix used to ascertain the nearest-neighbours, observing that, for the kernels used, this should generate equivalent neighbour sets as those we would obtain using the kernel-induced metric.

\subsection{Additional Simulations}
\label{appx:sims}
We ran each combination of kernel misspecification outlined in \ref{misspecs} ((b)(i) and (ii)) for values of \(n\) as outlined above, comparing \(d=15\) to \(d=50\) and to performance of a straight \(k\)-NN prediction (taking only the best \(k\)-NN performance over \(d\) and \(n\) for readability). Figures \ref{fig:conv_matched_rbf_rbf}, \ref{fig:conv_matched_exp_exp}, \ref{fig:conv_matched_rbf_exp} and \ref{fig:conv_matched_exp_rbf} show the MSE (not gradient) results for \(c_{RBF}\)-\(\hat{c}_{RBF}\), \(c_{Exp}\)-\(\hat{c}_{Exp}\), \(c_{RBF}\)-\(\hat{c}_{Exp}\) and \(c_{Exp}\)-\(\hat{c}_{RBF}\) respectively.

\begin{figure}[ht]
\vskip 0.2in
\begin{center}
\centerline{\includegraphics[width=\columnwidth]{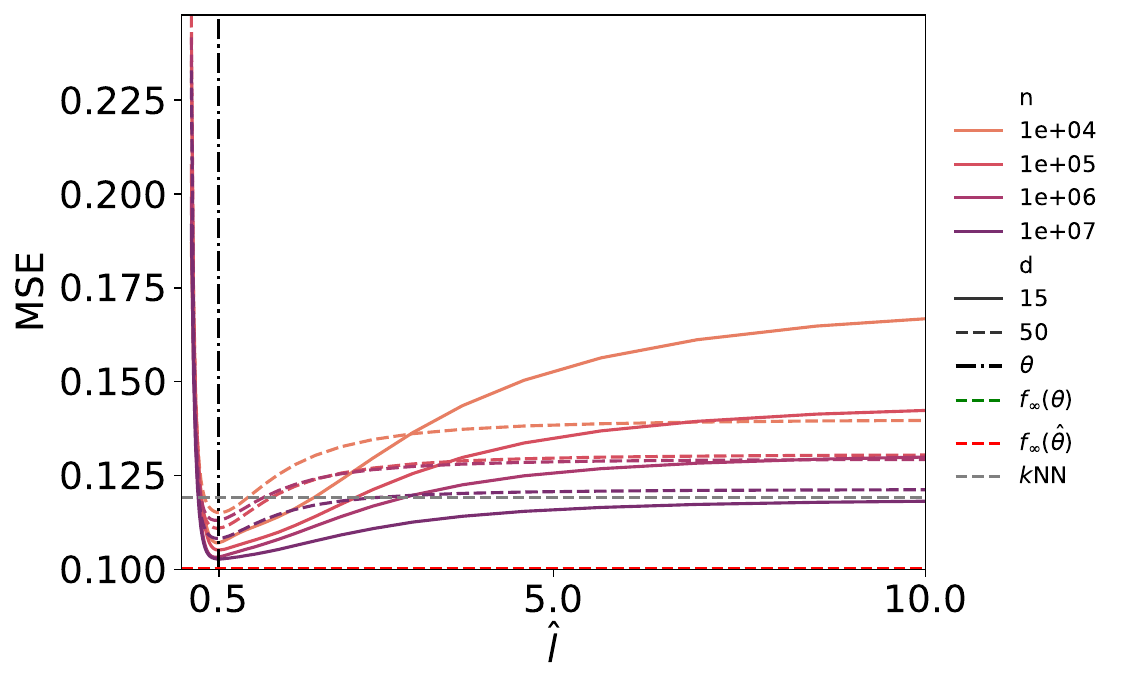}}
\caption{Convergence as a function of lengthscale with matched external parameters (\(c_{RBF}\)-\(\hat{c}_{RBF}\)) for variable \(n,d\). Horizontal grey dashed line represents the best MSE attained by a straight \(k\)-NN prediction (over both dimensions and all \(n\)).}
\label{fig:conv_matched_rbf_rbf}
\end{center}
\vskip -0.2in
\end{figure}

\begin{figure}[ht]
\vskip 0.2in
\begin{center}
\centerline{\includegraphics[width=\columnwidth]{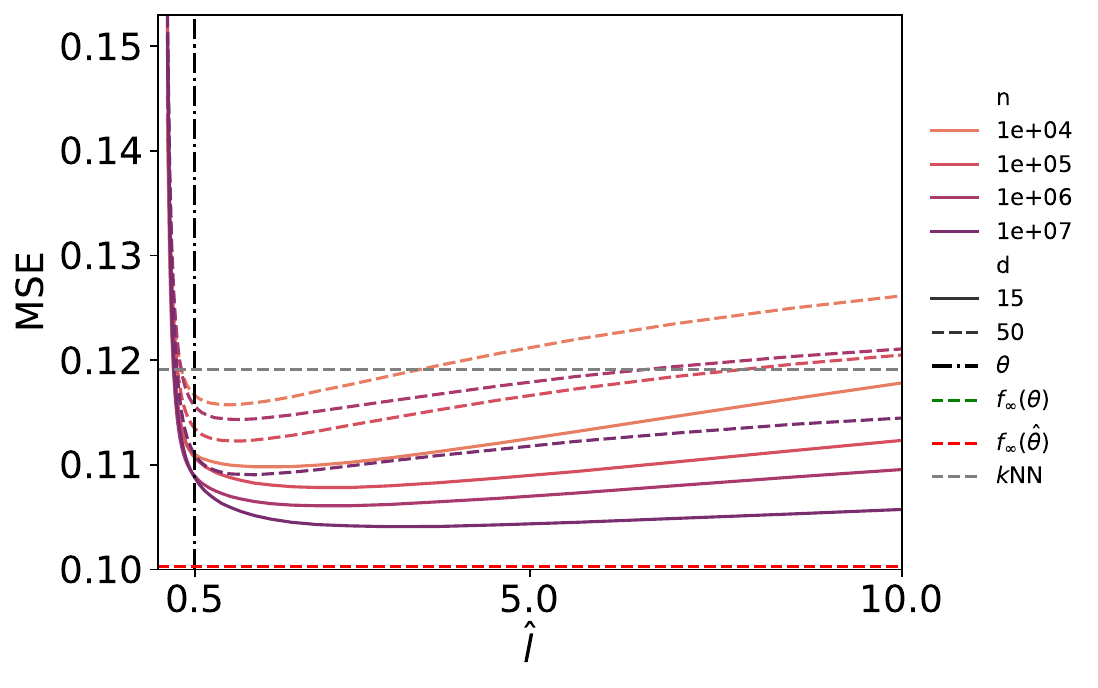}}
\caption{Convergence as a function of lengthscale with matched external parameters (\(c_{RBF}\)-\(\hat{c}_{Exp}\)) for variable \(n,d\). Horizontal grey dashed line represents the best MSE attained by a straight \(k\)-NN prediction (over both dimensions and all \(n\)).}
\label{fig:conv_matched_rbf_exp}
\end{center}
\vskip -0.2in
\end{figure}

\begin{figure}[ht]
\vskip 0.2in
\begin{center}
\centerline{\includegraphics[width=\columnwidth]{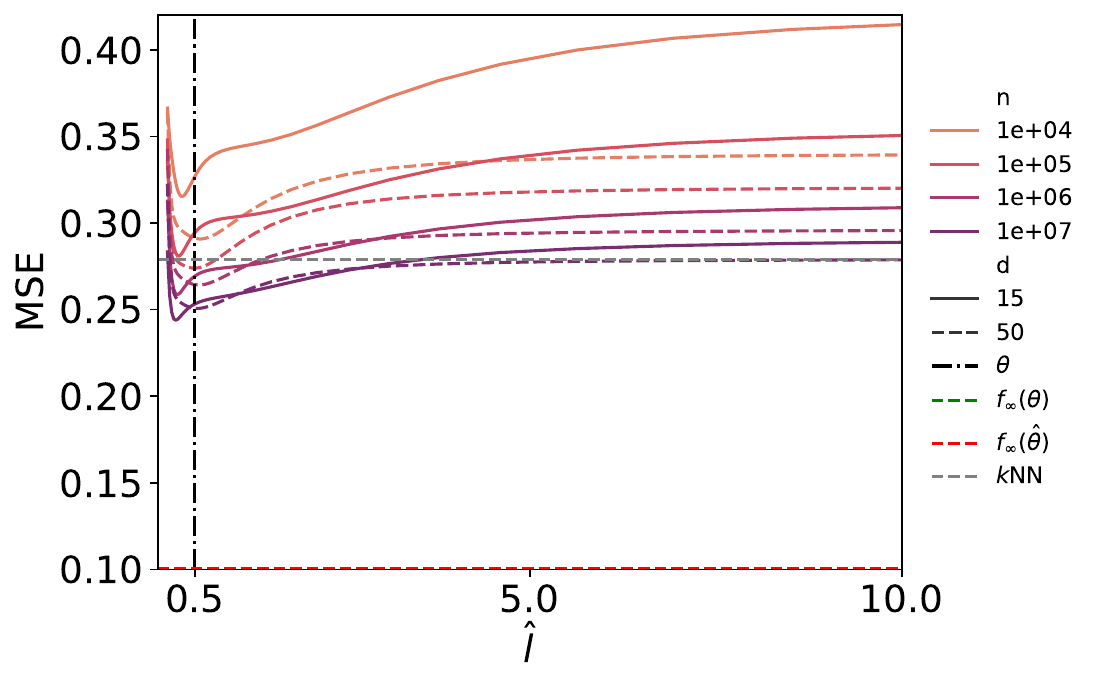}}
\caption{Convergence as a function of lengthscale with matched external parameters (\(c_{Exp}\)-\(\hat{c}_{RBF}\)) for variable \(n,d\). Horizontal grey dashed line represents the best MSE attained by a straight \(k\)-NN prediction (over both dimensions and all \(n\)).}
\label{fig:conv_matched_exp_rbf}
\end{center}
\vskip -0.2in
\end{figure}

\begin{figure}[ht]
\vskip 0.2in
\begin{center}
\centerline{\includegraphics[width=\columnwidth]{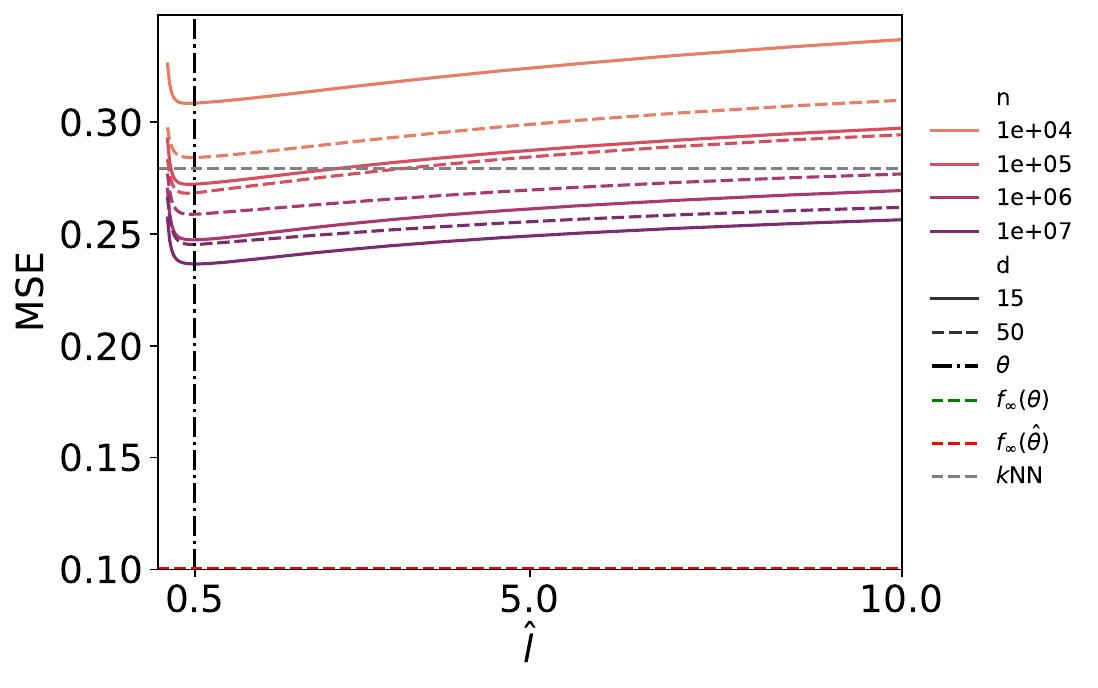}}
\caption{Convergence as a function of lengthscale with matched external parameters (\(c_{Exp}\)-\(\hat{c}_{Exp}\)) for variable \(n,d\). Horizontal grey dashed line represents the best MSE attained by a straight \(k\)-NN prediction (over both dimensions and all \(n\)).}
\label{fig:conv_matched_exp_exp}
\end{center}
\vskip -0.2in
\end{figure}



\subsection{GPnn vs \texorpdfstring{\(k\)}{k}-NN Performance (UCI Datasets)}
\begin{table}[H]
\centering
\setlength{\tabcolsep}{3pt} 
\caption{Results on UCI datasets for \(k\)-NN vs GPnn with RBF and Exponential kernels. Of note: poor performance of \(k\)-NN on the high-dimensional CTslice; performance close to GPnn (RBF) on Road3D. GPnn results taken directly from \cite{GPnn}; \(k\)-NN results obtained via simple modifications to the code used in that paper.}
\label{tab:best_kernels}
\begin{tabular}{llllllll}
\toprule
 &  &  & \multicolumn{5}{l}{\textbf{RMSE}} \\
 &  &  & \(k\)-NN & GPnn (Exp) & GPnn (RBF) \\
Dataset & \(n\) & \(d\) &  &  &  &  &  \\
\midrule
Poletele & 4.6e+03 & 19 & 0.512 ± 0.160 & \bfseries 0.169 ± 0.0076 & 0.195 ± 0.0042  \\
Bike & 1.4e+04 & 13 & 0.752 ± 0.167 & \bfseries 0.565 ± 0.0036 & 0.624 ± 0.0079 \\
Protein & 3.6e+04 & 9 & 0.747 ± 0.163 & \bfseries 0.58 ± 0.0068 & 0.666 ± 0.0014 \\
Ctslice & 4.2e+04 & 378 & 1.66 ± 0.315 & \bfseries 0.123 ± 0.004 & 0.132 ± 0.0006 \\
Road3D & 3.4e+05 & 2 & 0.366 ± 0.091 & \bfseries 0.0976 ± 0.013 & 0.351 ± 0.0014 \\
Song & 4.6e+05 & 90 & 0.882 ± 0.172 & \bfseries 0.776 ± 0.004 & 0.787 ± 0.0045 \\
Houseelectric & 1.6e+06 & 8 & 0.131 ± 0.044 & \bfseries 0.045 ± 0.0003 & 0.0506 ± 0.0007 \\
\bottomrule
\end{tabular}
\end{table}

\section{Nearest Neighbour GP Approximations}
\label{appx:nngps}
Nearest neighbour based GP approximations have a long history in the geo-spatial community, with pioneering work by \cite{vecchia_estimation_1988} on likelihood estimation under local approximation, followed by work such as \cite{Stein2004} who extended this to restricted maximum likelihood (REML) estimation and discussed the differing requirements between parameter estimation and prediction. \citeauthor{Stein2010} further explored the nature of local approximation by considering under what circumstances such approximations are asymptotically effective in terms of a MSE-type assessment, with discussions on what he termed the screening of farther points by the nearest-neighbours (having first raised the point in \cite{Stein2002}). In light of these works, more recent authors have implemented local approximation methods (e.g. \cite{Gramacy2015}) and gone on to develop fully probabilistic hierarchical models (NNGP) in \cite{Datta2016}. Similarly, authors in \cite{Wu2022} implement a stochastic variational approach to the problem, defining VNNGP. We note that the majority of these works focus on the implementation of likelihood approximations for the purposes of parameter estimation in some form. Primarily these sit under the umbrella of empirical Bayes although NNGP adopts a fully Bayesian procedure (in the original paper; an empirical Bayes version is added in \cite{finley_efficient_2018}). In general, the ML community seems to prefer empirical Bayes and \cite{GPnn} focuses on this domain, showing that asymptotically a nearest neighbour based \emph{predictive} mechanism (GPnn) can perform optimally in the sense of pointwise MSE and calibration, \emph{independently} of how the model parameters are estimated and, indeed, under somewhat severe model misspecification. It is interesting to note that both the VNNGP and NNGP models' pointwise-predictive distributions match that given in \cite{GPnn} when the inducing points are selected to match the input data (in the former case) and the conjugate NNGP (using the input data as the reference set) with hyperparameters chosen via empirical Bayes methods is used (in the latter case). 

\end{document}